\newcommand{\bm}[1]{\mbox{\boldmath{$#1$}}}
\def\x{{\bf   x}}
\def\u{{\bf   u}}
\def\v{{\bf   v}}
\def\F{{\bf   F}}
\def\I{{\bf   I}}
\def\J{{\bm   J}}
\def\n{{\bf   n}}
\def\({\left(}
\def\[{\left[}
\def\){\right)}
\def\]{\right]}
\def\div{\nabla\cdot }
\def\grad{\nabla }
\newtheorem{thm}{Theorem}
\numberwithin{equation}{section}
\numberwithin{thm}{section}
\numberwithin{lem}{section}
\begin{document}
\title{Thermodynamically consistent modeling and  simulation of  multi-component two-phase  flow  model with partial miscibility\thanks{This work is  supported by   National Natural Science Foundation of China (No.11301163),  and KAUST research fund to the
Computational Transport Phenomena Laboratory.}
}

\author{Jisheng Kou\thanks{School of Mathematics and Statistics, Hubei Engineering  University, Xiaogan 432000, Hubei, China. } \and Shuyu Sun\thanks{Corresponding author. Computational Transport Phenomena Laboratory, Division of Physical Science and Engineering,
King Abdullah University of Science and
Technology, Thuwal 23955-6900, Kingdom of Saudi Arabia.   Email: {\tt shuyu.sun@kaust.edu.sa}.}
}

 \maketitle

\begin{abstract}
  A general diffuse interface model with a realistic equation of state (e.g. Peng-Robinson equation of state)  is proposed to describe the multi-component two-phase fluid  flow   based on   the principles of  the NVT-based framework which is  a latest alternative  over  the NPT-based framework to model the realistic fluids. 
The proposed model  uses  the Helmholtz free energy rather than Gibbs free energy  in the NPT-based framework.  Different from the classical  routines,   we combine  the first law of thermodynamics and  related thermodynamical relations to derive   the entropy balance equation, and then we derive a transport equation of the Helmholtz free energy density.  Furthermore, by using the second law of thermodynamics, we derive a set of unified equations for both interfaces and bulk phases that can  describe the   partial miscibility of two fluids. 
A relation between the pressure gradient  and     chemical potential gradients  is   established, and this relation leads to a new formulation of the  momentum balance equation, which demonstrates that     chemical potential gradients become the primary driving force of  fluid motion.
Moreover, we prove that the proposed model satisfies  the total (free) energy dissipation with time.  For numerical simulation  of the proposed model,  the key difficulties   result from the strong nonlinearity of Helmholtz free energy density and tight coupling relations between molar densities and velocity.  To resolve these problems,  we propose a novel convex-concave  splitting of Helmholtz free energy density and deal well with the coupling relations between molar densities and velocity through very careful physical observations with a mathematical rigor.  We prove that the proposed numerical scheme can preserve  the discrete (free) energy dissipation.  Numerical tests are carried out to verify the effectiveness of the proposed method.

\end{abstract}
\begin{keywords}
 Multi-component two-phase flow;  Diffuse interface model; Partial miscibility; Energy dissipation;   Realistic  equation of state.
\end{keywords}
\begin{AMS}
65N12; 76T10; 49S05
 \end{AMS}

\section{Introduction}

Modeling and simulation of multiphase fluid systems with  a realistic equation of state (e.g. Peng-Robinson equation of state \cite{Peng1976EOS})    has become an attractive and challenging research topic   in the chemical  and reservoir engineering   \cite{kousun2015CMA,qiaosun2014,kousun2015SISC,kousun2015CHE,jindrova2013fast,mikyvska2015General,mikyvska2011new,mikyvska2012investigation,polivka2014compositional}.  
It   plays very important role in the pore scale modeling and simulation of subsurface fluid flow, especially  shale gas reservoir that  has become an increasingly important source of natural gas in the recent years. 

The mathematical models of multiphase fluids are often formulated by a set of thermodynamic state variables and fluid velocity. In the traditional framework of modeling multiphase fluids, the thermodynamic state variables are  the  pressure, temperature, and chemical composition (the so-called NPT-based framework). This framework has been extensively used in many applications \cite{firoozabadi1999thermodynamics,Moortgat2013compositional,chen2006multiphase,kouandsun2014dgtwophase}. 
 However, the NPT-based framework  has some essential   limitations  \cite{jindrova2013fast, mikyvska2011new,mikyvska2012investigation, polivka2014compositional,mikyvska2015General}.  First,  a realistic equation of state  (e.g. Peng-Robinson equation of state)  is a cubic equation with respect to the density, so   the density   might  not  be uniquely determined  for  the specified  pressure, temperature, and  molar fractions. Second,  \cite{polivka2014compositional} states that the specification of pressure, temperature, and   mole fractions does not always determine the equilibrium state of the system uniquely. In addition, in compositional fluid simulation,  the pressure is not known a-priori and there exists no intrinsic  equation for the pressure. This leads to the complication of constructing the pressure evolution equation   for  application of  the NPT-based framework  \cite{polivka2014compositional}.  
 
 In order to resolve the issues of the NPT-based framework,    an alternative  framework \cite{mikyvska2011new} has been developed, and  it uses the moles, volume, and temperature  (the so-called  NVT-based framework)  as the primal state variables. The NVT-based framework  is initially applied to the phase-split computations  of multi-component fluids at the constant moles, volume and temperature (also called NVT flash computation) in \cite{mikyvska2011new}, and for  the  subsequent research progress of the NVT flash computation, we refer to \cite{jindrova2013fast,mikyvska2015General,mikyvska2012investigation,kousun2016Flash}.
 Recently, the NVT-based framework has been successfully applied in the numerical simulation of compositional two-phase flow in porous media \cite{polivka2014compositional}.  Very recently, in \cite{kouandsun2016multiscale}, we have extended the NVT-based framework to the pore scale and developed a   diffuse interface model to  simulate   multi-component two-phase  flow  with partial miscibility, but  the rigorous mathematical analysis is absent to demonstrate the consistency with the thermodynamical laws.  In this work, we will propose a general mathematical model for multi-component two-phase  flow  with partial miscibility, which is viewed as an extension of the NVT-based framework.  Moreover, the proposed model is derived with the   mathematical rigor based on  the thermodynamic laws and a realistic equation of state (e.g. Peng-Robinson equation of state). 
 
 The diffuse-interface models for  two-phase  incompressible immiscible fluids  have been developed in the literature, \cite{Abels2012TwoPhaseModel} for instance.  
 The proposed model in this work can characterize the compressibility and partial miscibility.   A significant contribution in  \cite{Abels2012TwoPhaseModel} is that  an additional term involving mass diffusions is introduced in the momentum equation to ensure   consistency with thermodynamics for the case of non-matched densities. As shown in our derivations of Section \ref{secModel},  we find that an analogical term that also results from  mass diffusions is crucial to establish the thermodynamically consistent model of  compressible and partially  miscible multi-component two-phase flow.  This term occurs  in the momentum equation through a different but natural derived form, which satisfies the thermodynamic laws.

A major distinction is that the NVT-based framework  uses  the Helmholtz free energy rather than Gibbs free energy used in the NPT-based framework. The first challenging problem caused by this transition is that the traditional techniques \cite{Groot2015NET,Lebon2008NET} can no longer    work well   to derive the two-phase flow model. It is well known that the entropy balance equation  plays a fundamental role in the derivations of two-phase flow model.  In the classical non-equilibrium thermodynamics \cite{Groot2015NET,Lebon2008NET},   the variation of entropy $dS$ exists,  so the Gibbs relation can be used to derive this key equation. However, when  the Helmholtz free energy is introduced in the Gibbs relation, the variation of entropy $dS$ is eliminated. As a result,   the Gibbs relation cannot  be used to derive the entropy balance equation as in the classical  thermodynamics.  In this work, we resolve this problem by combing  the first law of thermodynamics and the related physical relations, and from this, we derive   consistent entropy balance equations. 

 
Different from the NPT-based framework,  we use a  thermodynamic pressure, which  is no longer an independent state variable in the NVT-based framework, and indeed, it is a function of the molar density and temperature.  Consequently, it is never necessary to construct the pressure evolution equation although the velocity field  is  no longer divergence-free.  The proposed model is  related to the dynamic  van der Waals theory of two-phase fluid flow for a pure substance with the thermodynamic pressure, which is developed from physical  point of view      \cite{Onuki2005PRL,Onuki2007PRE}. Such model  is first proposed in \cite{Onuki2005PRL,Onuki2007PRE}, and it has  successful applications \cite{Qian2016heatflow} for example.  But the proposed model  has  substantial differences from \cite{Onuki2005PRL,Onuki2007PRE} that we consider the fluids composed of multiple components and introduce an additional term involving mass diffusions  in the momentum equation by a natural way. Moreover,  we establish the relation between the pressure gradient  and    gradients of the chemical potentials. 
Based on these relations, we derive   a new formulation of the  momentum conservation equation, which shows that the gradients of  chemical potentials  become the primary driving force of the fluid motion.
This formulation allows us  to conveniently prove that the total (free) energy of the proposed model is dissipated with time.


For numerical simulation, a main challenge   of diffuse interface models is  to design efficient numerical schemes that preserve  the discrete (free) energy dissipation \cite{shen2015SIAM,shen2016JCP,Bao2012FEM}.  There are a lot of efforts on the developments of such schemes in the literature; in particular, a notable  progress is  that   efficient numerical simulation methods for simulating the model of  \cite{Abels2012TwoPhaseModel}  have been proposed in \cite{shen2015SIAM,shen2016JCP}.  The key difficulties encountered in constructing  energy-stable numerical simulation for the proposed model are the strong nonlinearity  of Helmholtz free energy density and tight coupling relations between molar densities and velocity.  To resolve these problems,  we propose a novel convex-concave  splitting of Helmholtz free energy density and deal well with the coupling relations between molar densities and velocity through very careful physical observations with a mathematical rigor.  The proposed numerical scheme is proved to preserve  the discrete (free) energy dissipation.

The key contributions of our work are listed as below:

(1)  A general diffuse interface model with a realistic equation of state (e.g. Peng-Robinson equation of state)  is proposed to describe the multi-component two-phase fluid  flow   based on   the principles of  the NVT-based framework. 

(2) We combine  the first law of thermodynamics and the related physical relations to derive   the entropy balance equations, and then we derive a transport equation of the Helmholtz free energy density.  Finally, using the second law of thermodynamics, we derive a set of unified equations for both interfaces and bulk phases that can  describe the   partial miscibility of two fluids. 

(3) A  term involving mass diffusions is naturally included in the momentum equation to ensure  consistency with thermodynamics.

(4) We prove a relation between the pressure gradient  and   the gradients of the chemical potentials, and from this, we derive   a new formulation of the  momentum balance equation, which demonstrates that  chemical potential gradients  become the primary driving force of the fluid motion.

(5) We prove that the total (free) energy of the proposed model is dissipated with time.

(6) An energy-dissipation  numerical scheme is proposed based on a convex-concave  splitting of Helmholtz free energy density and a careful treatment of the coupling relations between molar densities and velocity.  Numerical tests are carried out to verify the effectiveness of the proposed method.

The rest of this paper is organized as   follows. In Section 2, we will  introduce the related thermodynamic relations.
 In Section 3, we derive a general model for multi-component two-phase fluid flow.  In Section 4, we prove the total  (free) energy dissipation law of the proposed model.   In Section 5, we propose an efficient energy-stable numerical method, and in   Section 6 numerical tests are carried out  to verify  effectiveness of the proposed method.   Finally,   concluding remarks are provided in Section 7.

\section{Thermodynamic relations}

\subsection{Primal thermodynamic relations}

We consider a fluid mixture  composed of $M$ components under a constant temperature ($T$).  We denote the molar density vector by $\n = [n_1,n_2,\cdots,n_M]^T$, where  $n_i$ is the molar density of the $i$th component.  Furthermore, we  denote  the overall volume by $V$, and then the moles of component $i$ is $N_i=n_iV$. The  NVT-based framework uses the moles of each component ($N_i$) and volume ($V$) together with the given constant temperature as the primal state variables. With the relation $n_i=N_i/V$, the primal state variables can be reduced into   the molar density ($\n$).

By fundamental laws of thermodynamics \cite{firoozabadi1999thermodynamics}, we have the Gibbs relation \cite{Groot2015NET}
\begin{eqnarray}\label{eqThermodynamicsLaw01}
    dU=TdS-pdV+\sum_{i=1}^M\mu_idN_i,
\end{eqnarray}
where $U$ is the internal energy, $T$ is the temperature, $S$ is the entropy,   $p$ is the pressure, and $\mu_i$ is the  chemical potential   of  component $i$.

We define the Helmholtz free energy as usual:
\begin{eqnarray}\label{eqHelmholtzenergy01}
    F=U-TS.
\end{eqnarray}
With the definition of the Helmholtz free energy, we get $dU=dF+TdS$, and thus, the Gibbs relation \eqref{eqThermodynamicsLaw01} becomes
\begin{eqnarray}\label{eqThermodynamicsLaw03}
    dF =-pdV+\sum_{i=1}^M\mu_idN_i.
\end{eqnarray}

We first note that the entropy balance equation plays a fundamental role in the derivations of two-phase flow model. In the classical  derivations \cite{Groot2015NET,Lebon2008NET},   the Gibbs relation is used to derive this key equation because it contains the variation of entropy $dS$. However, we can see from \eqref{eqThermodynamicsLaw03}  that after  the Helmholtz free energy is introduced in the Gibbs relation, the variation of entropy $dS$ is eliminated. As a result,   the Gibbs relation cannot  be used to derive the entropy equation as in the classical  theory, and it is clearly in need of  the other alternative relations.  In order to resolve this problem, we will use 
the first law of thermodynamics and the entropy structure rather than the Gibbs relation to derive the entropy balance equation.  

We now define the Helmholtz free energy density as $ f=\frac{F}{V}$, and then have its form
\begin{eqnarray}\label{eqgeneralHelmholtzenergydensity}
    f=-p+\sum_{i=1}^M\mu_in_i.
\end{eqnarray}
In general, the Helmholtz free energy density is a function of $\n$  and  temperature.

\subsection{Helmholtz free energy  and a realistic equation of state}
We introduce briefly  the formulations of Helmholtz free energy density $f_{b}(\n)$ of a homogeneous bulk  fluid determined by Peng-Robinson equation of state, which is   widely  used in the oil reservoir and chemical engineering. The proposed model can also apply the other realistic cubic  equations of state, for instance, the van der Waals equation of state \cite{Onuki2005PRL,Onuki2007PRE}, which is popularly used in physics. 

The Helmholtz free energy density $f_{b}(\n)$ of a bulk fluid is calculated by a thermodynamic model as
\begin{eqnarray}\label{eqHelmholtzEnergy_a0}
    f_b(\n)= f_b^{\textnormal{ideal}}(\n) +f_b^{\textnormal{repulsion}}(\n)+f_b^{\textnormal{attraction}}(\n),
\end{eqnarray}
where $f_b^{\textnormal{ideal}}$, $f_b^{\textnormal{repulsion}}$ and $f_b^{\textnormal{attraction}}$ are formulated in the appendix. 
 The mole-pressure-temperature form of Peng-Robinson equation of state  \cite{Peng1976EOS}  is
\begin{eqnarray}\label{eqPREOSByMolarDens}
    p = \frac{nRT}{1-bn}-\frac{an^2}{1+2bn -b^2n^2 },
\end{eqnarray}
which is a cubic equation of the overall molar density $n$.   In the mathematical sense, \eqref{eqPREOSByMolarDens}    might  have not  a unique   solution   for  the specified  pressure, temperature, and  molar fractions (that is, $p,T,y_i$ are given and thus $a,b$ are known). It is  a  well-known drawback of the NPT-Based framework. However, under the NVT-based framework, \eqref{eqPREOSByMolarDens} can always provide a unique and explicit pressure   for given  molar density and temperature. We note that for bulk fluids, the PR-EOS formulation \eqref{eqPREOSByMolarDens} is equivalent to  the pressure equation (see the details in the appendix)
\begin{eqnarray}\label{eqgeneralpressure}
    p=\sum_{i=1}^M\mu_in_i-f,
\end{eqnarray}
which results from \eqref{eqgeneralHelmholtzenergydensity}. 
However, the pressure equation \eqref{eqgeneralpressure} is more general and it can be also used to define and calculate  the pressure of an inhomogeneous fluid.

For  realistic fluids, the diffuse interfaces always exist between two phases. The interfacial partial miscibility is a phenomenon that the two-phase fluids   behave on the interfaces.  To model this feature, a local density gradient  contribution is introduced into the Helmholtz free energy density of inhomogeneous fluids. The  general form of Helmholtz free energy density (denoted by $f$) is then the sum of two contributions: Helmholtz free energy density of bulk homogeneous fluid and a local density gradient  contribution:
\begin{eqnarray}\label{eqHelmholtzfreeenergydensity}
 f=f_b+f_\grad, 
\end{eqnarray}
where  
\begin{eqnarray}\label{eqHelmholtzfreeenergydensityDensityGradient}
f_\grad=\frac{1}{2}\sum_{i,j=1}^Mc_{ij}\grad n_i\cdot\grad n_j.
 \end{eqnarray}
 Here, $c_{ij}$ is the cross influence parameter.   The density gradient  contribution accounts for the phase transition by the gradual density changes of each component  on the interfaces.  But for a bulk phase, the molar density of each component  has uniform distribution in space, and in this case,  $f_\grad$ vanishes. 
 For the  influence parameters, it is generally  considered  constant in this paper (see the appendix). 

The chemical potential of component $i$ is defined as 
  \begin{eqnarray}\label{eqgeneralchemicalpotential}
   \mu_i=\(\frac{\delta  f(\n,T)}{\delta n_i}\)_{T}=\mu_i^b-\sum_{j=1}^M\div\(c_{ij}\grad{n_j}\),
   \end{eqnarray}
  where $\mu_i^b=\(\frac{\partial  f_b(\n,T)}{\partial n_i}\)_T$ and $\frac{\delta}{\delta n_i}$ is the variational  derivative. Moreover, the general  pressure can be formulated as
\begin{eqnarray}\label{eqMultiComponentDefPresGeneralB}
    p &=& \sum_{i=1}^Mn_i \mu_i- f\nonumber\\
   &=&p_b-  \sum_{i,j=1}^Mn_i\div\(c_{ij}\grad{n_j}\)-\frac{1}{2} \sum_{i,j=1}^Mc_{ij}\nabla n_i\cdot\nabla n_j,
\end{eqnarray}
where $p_b=\sum_{i=1}^Mn_i\mu_i^b-f_b.$
 From \eqref{eqMultiComponentDefPresGeneralB}, the pressure is a function of $\n$ and $T$ in the NVT-based framework, but it is no longer an independent state variable as it is so in the NPT-based framework.

\section{Mathematical modeling of multi-component two-phase flow}\label{secModel}
In this section, we will derive the general model for the multi-component two-phase fluid flow,  in which the viscosity and density gradient contribution to free energy  are under  consideration.  First, we use the first law of thermodynamics and entropy splitting structure to derive an entropy equation, and then we derive a transport equation of Helmholtz free energy density to further reduce the entropy equation.  From the reduced entropy equation, we derive a general model of multi-component two-phase flow, which obeys the second law of thermodynamics. 

\subsection{Entropy equation}

The first law of thermodynamics states
\begin{eqnarray}\label{eqFirstThermodynamicsLawUE}
    \frac{d(U+E)}{dt}=\frac{dW}{dt}+\frac{dQ}{dt},
\end{eqnarray}
where $t$ is the time, $E$ is the  kinetic  energy, $W$ is the work done by the   face force $\F_t$, and $Q$ stands for the heat transfer from the surrounding that occurs  to keep the system temperature constant.  We split  the total entropy $S$ into a summation of two contributions.  One is the entropy of the system, denoted by $S_{\textnormal{sys}}$. The other is  the entropy of the surrounding, denoted by $S_{\textnormal{surr}}$,  which has the relation with $Q$ as 
\begin{eqnarray}\label{eqEntropySurr}
    dS_{\textnormal{surr}}=-\frac{dQ}{T}.
\end{eqnarray}
Taking into account    the relation $U=F+TS_{\textnormal{sys}}$, and using \eqref{eqFirstThermodynamicsLawUE} and \eqref{eqEntropySurr}, we have 
\begin{eqnarray}\label{eqFirstThermodynamicsLawFE}
    \frac{dS}{dt}&=&\frac{dS_{\textnormal{sys}}}{dt}+\frac{dS_{\textnormal{surr}}}{dt}\nonumber\\
    &=& \frac{dS_{\textnormal{sys}}}{dt}-\frac{1}{T}\frac{dQ}{dt}\nonumber\\
    &=& \frac{dS_{\textnormal{sys}}}{dt}-\frac{1}{T}\( \frac{d(U+E)}{dt}-\frac{dW}{dt}\)\nonumber\\
    &=& -\frac{1}{T}\frac{d(F+E)}{dt}+\frac{1}{T}\frac{dW}{dt}.
\end{eqnarray}

 We denote by  $M_{w,i}$ the molar weight of component $i$, and  define the   mass density
 of the mixture  as
\begin{eqnarray}\label{eqMultiCompononentTotalDensity}
  \rho=\sum_{i=1}^Mn_iM_{w,i}.
\end{eqnarray}
In a  time-dependent volume $V(t)$,  we define the entropy, Helmholtz free energy and  kinetic energy within $V(t)$ as
\begin{eqnarray}\label{eqDefKineticEnergy}
  S=\int_{V(t)} sdV,~~F=\int_{V(t)} fdV,~~E=\frac{1}{2}\int_{V(t)}\rho|\u|^2dV ,
\end{eqnarray}
 where   $s$ is the  entropy density and $\u$ is the mass-averaged velocity.
Applying the Reynolds transport theorem and the Gauss divergence theorem,  we deduce that
 \begin{eqnarray}\label{eqReynoldsMultiComp01}
  \frac{dS}{dt}&=&\int_{V(t)}\frac{\partial s}{\partial t}dV+\int_{V(t)}\div\(\u s\)dV,
\end{eqnarray}
\begin{eqnarray}\label{eqReynoldsMultiComp02}
 \frac{dF}{dt}=\int_{V(t)}\frac{\partial f}{\partial t}dV+\int_{V(t)}\div\(\u f\)dV,
\end{eqnarray}
and
\begin{eqnarray}\label{eqReynoldsMultiComp03a}
  \frac{dE}{dt}&=&\frac{1}{2}\int_{V(t)}\frac{\partial \(\rho \u\cdot\u\)}{\partial t}dV+\frac{1}{2}\int_{V(t)}\div\(\u \(\rho \u\cdot\u\)\)dV\nonumber\\
   &=&\int_{V(t)}\(\rho \u\cdot\frac{\partial \u}{\partial t}+\frac{1}{2}\u\cdot\u\frac{\partial\rho}{\partial t}\)dV\nonumber\\
   &&+\frac{1}{2}\int_{V(t)}\big(\(\rho \u\cdot\u\)\div\u+\(\u\cdot\u\)\u\cdot\grad\rho+ \rho\u\cdot\grad\( \u\cdot\u\)\big)dV\nonumber\\
   &=&\int_{V(t)}\(\rho \u\cdot\frac{\partial \u}{\partial t}+\frac{1}{2}\u\cdot\u\frac{\partial\rho}{\partial t}\)dV\nonumber\\
   &&+\frac{1}{2}\int_{V(t)}\big(\(\u\cdot\u\)\div\(\rho\u\)+ 2\rho\u\cdot\(\u\cdot\grad\u\)\big)dV\nonumber\\
   &=&\int_{V(t)}\rho\u\cdot\(\frac{\partial \u}{\partial t}+\u\cdot\grad\u\)dV+\frac{1}{2}\int_{V(t)}\u\cdot\u\(\frac{\partial\rho}{\partial t}+\div\(\rho\u\)\)dV.
\end{eqnarray}
 
 In presence of a fluid velocity field, the mass transfer in fluids  takes place through    the convection in addition to the diffusion  of each component.  Thus, the mass balance law   for component  $i$    gives us
\begin{eqnarray}\label{eqGeneralNSEQMass}
\frac{\partial n_i}{\partial t}+\div\(\u n_i\)+\div \J_i=0,
\end{eqnarray}
where $\J_i$ is the diffusion flux of component $i$ and its formulation will be discussed in the latter of this section.  Multiplying \eqref{eqGeneralNSEQMass} by $M_{w,i}$ and summing them from $i=1$ to $M$, we obtain  the mass balance equation
\begin{eqnarray}\label{eqMultiCompononentMassConserveMass}
\frac{\partial \rho}{\partial t}+\div(\rho\u)+\sum_{i=1}^MM_{w,i}\div\J_i=0.
\end{eqnarray} 
Substituting  \eqref{eqMultiCompononentMassConserveMass}   into \eqref{eqReynoldsMultiComp03a},  we have 
\begin{eqnarray}\label{eqReynoldsMultiComp03}
  \frac{dE}{dt}&=&\int_{V(t)}\rho\u\cdot\frac{d \u}{d t}dV-\frac{1}{2}\int_{V(t)}\sum_{i=1}^MM_{w,i}\(\div\J_i\)\( \u\cdot\u\) dV\nonumber\\
  &=&\int_{V(t)}\u\cdot\(\rho\frac{d \u}{d t}+\sum_{i=1}^MM_{w,i}\J_i\cdot\grad\u\)dV\nonumber\\
  &&-\frac{1}{2}\int_{V(t)}\sum_{i=1}^MM_{w,i}\div\( (\u\cdot\u)\J_i\) dV,
\end{eqnarray}
where  $\frac{d \u}{d t}=\frac{\partial  \u}{\partial t}+ \u\cdot\grad{ \u}.$

The work done by $\F_{t}$     is expressed as
\begin{eqnarray*}\label{eqMultiCompononentFirstThermodynamicsLaw07}
  \frac{dW}{dt}=   \int_{\partial V(t)} \F_{t}\cdot\u d\bm s.
\end{eqnarray*}
Cauchy's relation between face force $\F_{t}$ and the stress tensor $\bm\sigma$ of  component $i$  gives $\F_{t}=-\bm\sigma\cdot\bm\nu$, and as a result, 
\begin{eqnarray}\label{eqReynoldsMultiCompWork}
  \frac{dW}{dt}&=& - \int_{\partial V(t)} \(\bm\sigma\cdot\bm\nu\)\cdot\u d\bm s\nonumber\\
  &=& -\int_{V(t)} \( \bm\sigma^T:\grad\u +\u\cdot(\div\bm\sigma)\)dV,
\end{eqnarray}
where $\bm\nu$ is the unit normal vector towards  the outside  of  $V(t)$.  We note that  the other external forces including gravity force is ignored in this work, but the model derivations can be easily  extended to the cases in the presence of external forces.

Substituting \eqref{eqReynoldsMultiComp01}, \eqref{eqReynoldsMultiComp02}, \eqref{eqReynoldsMultiComp03} and \eqref{eqReynoldsMultiCompWork}  into \eqref{eqFirstThermodynamicsLawFE}, and taking into account the arbitrariness of $V(t)$, we obtain the entropy balance equation
\begin{eqnarray}\label{eqEntropyFirst}
 T\(\frac{\partial s}{\partial t}+\div(\u s)\)&=&\frac{1}{2} \sum_{i=1}^MM_{w,i}\div\( (\u\cdot\u)\J_i\)-\frac{\partial  f}{\partial t}-\div\(\u  f\)- \bm\sigma^T:\grad\u\nonumber\\
&&~~
 -\u\cdot\(\rho\frac{d \u}{d t}+\sum_{i=1}^MM_{w,i}\J_i\cdot\grad\u+\div\bm\sigma\)  .
\end{eqnarray}
In the following subsections, we will derive the transport equation of Helmholtz free energy density to reduce the entropy equation.

 \subsection{Transport equation of Helmholtz free energy density}
 
 From the definition of $p_b$, we have
 \begin{eqnarray}\label{eqMultiCompononentMultiPresChptl01b}
\grad p_b=\grad \(\sum_{i=1}^Mn_i\mu_i^b-f_b\)=\sum_{i=1}^M\(n_i\grad\mu_i^b+\mu_i^b\grad n_i-\mu_i^b\grad n_i\)=\sum_{i=1}^Mn_i\grad\mu_i^b.
\end{eqnarray}
Using the relation \eqref{eqMultiCompononentMultiPresChptl01b} and component mass balance equation \eqref{eqGeneralNSEQMass}, we  derive the transport equation of Helmholtz free energy density $f_b$ as
\begin{eqnarray}\label{eqMultiCompononentHelmholtzMultiTransport01}
 \frac{\partial f_b}{\partial t}
 &=&\sum_{i=1}^M\mu_i^b\frac{\partial n_i}{\partial t}=-\sum_{i=1}^M\mu_i^b\(\div(n_i\u)+\div\J_i\)\nonumber\\
 &=&-\div\(\sum_{i=1}^Mn_i\mu_i^b\u-p_b\u\)-\div(p_b\u)+\sum_{i=1}^Mn_i\u\cdot\grad\mu_i^b-\sum_{i=1}^M\mu_i^b\div\J_i\nonumber\\
 &=&-\div(f_b\u)-\div(p_b\u)+\u\cdot\grad p_b-\sum_{i=1}^M\mu_i^b\div\J_i\nonumber\\
 &=&-\div(f_b\u)-p_b\div\u-\sum_{i=1}^M\mu_i^b\div\J_i.
\end{eqnarray}
The   gradient contribution of Helmholtz free energy density can be formulated as
\begin{eqnarray}\label{eqMultiCompononentHelmholtzMultiTransport02}
\frac{\partial f_\grad}{\partial t}
 &=&\frac{1}{2}\frac{\partial\(\sum_{i,j=1}^M c_{ij} \nabla n_i\cdot\nabla n_j\)}{\partial{t}}=\sum_{i,j=1}^M c_{ij} \nabla n_i\cdot\nabla\frac{\partial n_j}{\partial{t}}\nonumber\\
 &=&-\sum_{i,j=1}^M c_{ij} \nabla n_i\cdot\nabla\big(\div(\u n_j)+\div\J_j\big)\nonumber\\
&=&-\sum_{i,j=1}^M\div\big(\(\div\(\u n_j\)\)c_{ij}\grad{n_i}\big)-\sum_{i,j=1}^M  \div\big(\(\div\J_j\)c_{ij}\grad{n_i}\big)\nonumber\\
 &&+\sum_{i,j=1}^Mn_j\(\div\u \)\div\(c_{ij}\grad{n_i}\)
+\sum_{i,j=1}^M\(\u\cdot\grad{n_j}\)\div\(c_{ij}\grad{n_i}\)\nonumber\\
 &&+\sum_{i,j=1}^M  \(\div\J_j\)\div\(c_{ij}\grad{n_i}\),
 \end{eqnarray}
 and
 \begin{eqnarray}\label{eqMultiCompononentHelmholtzMultiTransport03}
\div(f_\grad\u)
 &=&\frac{1}{2}\div\(\u\sum_{i,j=1}^M c_{ij} \nabla n_i\cdot\nabla n_j\)\nonumber\\
 &=&\frac{1}{2}\(\sum_{i,j=1}^M c_{ij} \nabla n_i\cdot\nabla n_j\)\div\u+\frac{1}{2}\u\cdot\grad\(\sum_{i,j=1}^M c_{ij} \nabla n_i\cdot\nabla n_j\).
 \end{eqnarray}
 Combining  \eqref{eqMultiCompononentHelmholtzMultiTransport01}-\eqref{eqMultiCompononentHelmholtzMultiTransport03}, we deduce the transport equation of the   Helmholtz free energy $ f$  as
\begin{eqnarray}\label{eqMultiCompononentHelmholtzMultiTransport05}
 \frac{\partial  f}{\partial t}+\div( f\u)
 &=&\frac{\partial f_b}{\partial t}+\div(f_b\u)+\frac{\partial f_\grad}{\partial t}+\div(f_\grad\u)\nonumber\\
 &=&-p_b\div\u-\sum_{i=1}^M\mu_i^b\div\J_i-\sum_{i,j=1}^M\div\big(\(\div\(\u n_j\)\)c_{ij}\grad{n_i}\big)\nonumber\\
 &&+\sum_{i,j=1}^M\(\div\u \)n_i\div\(c_{ij}\grad{n_j}\)
+\sum_{i,j=1}^M\(\u\cdot\grad{n_j}\)\div\(c_{ij}\grad{n_i}\)\nonumber\\
 &&-\sum_{i,j=1}^M  \div\big(\(\div\J_j\)c_{ij}\grad{n_i}\big)+\sum_{i,j=1}^M  \(\div\J_j\)\div\(c_{ij}\grad{n_i}\)\nonumber\\
 &&+\frac{1}{2}\(\sum_{i,j=1}^M c_{ij} \nabla n_i\cdot\nabla n_j\)\div\u+\frac{1}{2}\u\cdot\sum_{i,j=1}^M\grad\( c_{ij} \nabla n_i\cdot\nabla n_j\)\nonumber\\
 &=&-\(p_b-  \sum_{i,j=1}^Mn_i\div\(c_{ij}\grad{n_j}\)-\frac{1}{2} \sum_{i,j=1}^Mc_{ij}\nabla n_i\cdot\nabla n_j\)\div\u\nonumber\\
 &&-\sum_{i=1}^M\mu_i^b\div\J_i-\sum_{i,j=1}^M\div\big(\(\div\(\u n_j\)\)c_{ij}\grad{n_i}\big)\nonumber\\
 &&+\sum_{i,j=1}^M\(\u\cdot\grad{n_j}\)\div\(c_{ij}\grad{n_i}\)-\sum_{i,j=1}^M  \div\big(\(\div\J_j\)c_{ij}\grad{n_i}\big)\nonumber\\
 &&+\sum_{i,j=1}^M  \(\div\J_i\)\div\(c_{ij}\grad{n_j}\)+\frac{1}{2}\u\cdot\sum_{i,j=1}^M\grad\( c_{ij} \nabla n_i\cdot\nabla n_j\)\nonumber\\
 &=&- p ~\div\u-\sum_{i=1}^M\mu_i\div\J_i+\sum_{i,j=1}^M\(\u\cdot\grad{n_i}\)\div\(c_{ij}\grad{n_j}\)\nonumber\\
 &&-\sum_{i,j=1}^M\div\big(\(\div\(\u n_j\)\)c_{ij}\grad{n_i}\big)-\sum_{i,j=1}^M  \div\big(\(\div\J_j\)c_{ij}\grad{n_i}\big)\nonumber\\
 &&+\frac{1}{2}\u\cdot\sum_{i,j=1}^M\grad\( c_{ij} \nabla n_i\cdot\nabla n_j\).
\end{eqnarray}
Taking into account the identity 
\begin{eqnarray*}\label{eqMultiCompononentHelmholtzMultiTransport04}
\sum_{i,j=1}^M\(\grad{n_i}\)\div\(c_{ij}\grad{n_j}\) +\frac{1}{2}\sum_{i,j=1}^M \grad\(c_{ij} \nabla n_i\cdot\nabla n_j\) =\sum_{i,j=1}^M\div \(c_{ij}\nabla n_i\otimes\nabla n_j\),
  \end{eqnarray*}
 we can  reformulate \eqref{eqMultiCompononentHelmholtzMultiTransport05} as
  \begin{eqnarray}\label{eqMultiCompononentHelmholtzMultiTransport05b}
 \frac{\partial  f}{\partial t}+\div( f\u)
 &=&- p ~\div\u-\sum_{i=1}^M\mu_i\div\J_i+\u\cdot\(\sum_{i,j=1}^M\div c_{ij}\(\nabla n_i\otimes\nabla n_j\)\)\nonumber\\
 &&-\sum_{i,j=1}^M\div\big(\(\div\(\u n_j\)\)c_{ij}\grad{n_i}\big)-\sum_{i,j=1}^M  \div\big(\(\div\J_j\)c_{ij}\grad{n_i}\big)\nonumber\\
 &=&- p ~\div\u+\div\(\sum_{i,j=1}^Mc_{ij}\(\grad n_i\otimes\grad n_j\)\cdot\u\)-\sum_{i=1}^M\div\( \mu_i\J_i\)\nonumber\\
 &&-\sum_{i,j=1}^M\div\big(\(\div\(\u n_j\)\)c_{ij}\grad{n_i}\big)-\sum_{i,j=1}^M  \div\big(\(\div\J_j\)c_{ij}\grad{n_i}\big)\nonumber\\
 &&-\(\sum_{i,j=1}^Mc_{ij}\(\grad n_i\otimes\grad n_j\)\):\grad\u+\sum_{i=1}^M\J_i\cdot\grad \mu_i.
\end{eqnarray}

\subsection{Model equations}
Substituting \eqref{eqMultiCompononentHelmholtzMultiTransport05b} into \eqref{eqEntropyFirst}, we reformulate   the entropy equation as
\begin{eqnarray}\label{eqEntropySecond}
 T\(\frac{\partial s}{\partial t}+\div(\u s)\)&=&\frac{1}{2} \sum_{i=1}^MM_{w,i}\div\( (\u\cdot\u)\J_i\)-\div\(\sum_{i,j=1}^Mc_{ij}\(\grad n_i\otimes\grad n_j\)\cdot\u\)\nonumber\\
 &&+\sum_{i=1}^M\div\( \mu_i\J_i\)+\sum_{i,j=1}^M\div\big(\(\div\(\u n_j\)\)c_{ij}\grad{n_i}\big)\nonumber\\
 &&+\sum_{i,j=1}^M  \div\big(\(\div\J_j\)c_{ij}\grad{n_i}\big)-\sum_{i=1}^M\J_i\cdot\grad \mu_i
 - \bm\sigma^T:\grad\u\nonumber\\
 &&+\(p\I+\sum_{i,j=1}^Mc_{ij}\(\grad n_i\otimes\grad n_j\)\):\grad\u\nonumber\\
 &&
 -\u\cdot\(\rho\frac{d \u}{d t}+\sum_{i=1}^MM_{w,i}\J_i\cdot\grad\u+\div\bm\sigma\)  ,
\end{eqnarray}
where $\I$ is the second-order identity tensor. 
We consider the fluid mixture in a closed system with   the fixed total moles for each component.  Let $\Omega$ denote the domain  with the fixed volume.  The natural boundary conditions can be formulated as
\begin{eqnarray}\label{eqEntropySecondb}
 \u\cdot\bm\gamma_{\partial\Omega} =0,~~\J_i\cdot\bm\gamma_{\partial\Omega} =0,~~\grad{n}_i\cdot\bm\gamma_{\partial\Omega} =0.
\end{eqnarray}
where $\bm\gamma_{\partial\Omega}$ denotes a normal unit outward vector  to the boundary $\partial \Omega$. 
Integrating \eqref{eqEntropySecond} over the entire domain, we obtain the change of total entropy $S$ with time
\begin{eqnarray}\label{eqEntropySecondb}
 T\frac{\partial S}{\partial t}&=&-\int_\Omega\sum_{i=1}^M\J_i\cdot\grad \mu_id\x-\int_\Omega\(\bm\sigma^T-p\I-\sum_{i,j=1}^Mc_{ij}\(\grad n_i\otimes\grad n_j\)\):\grad\u d\x\nonumber\\
 &&
 -\int_\Omega\u\cdot\(\rho\frac{d \u}{d t}+\sum_{i=1}^MM_{w,i}\J_i\cdot\grad\u+\div\bm\sigma\) d\x .
\end{eqnarray}
where $\x\in\Omega$.
According to the second law of thermodynamics, the total entropy shall not decrease with time. Using this principle, we can determine the complete forms of multi-component two-phase flow model. 
First, we consider  an ideal reversible  process   to get the form of the reversible stress, and the reversibility implies that there exist no  effects of viscosity and friction.    In this case,  the  entropy shall be conserved, so the diffusions vanish, i.e. $\J_i=0$, and the total tress $\bm\sigma$ becomes equal to the reversible stress,   denoted by $\bm\sigma_{\textnormal{rev}}$, which must have the form
\begin{eqnarray}\label{eqMulticomponentTotalStressA}
 \bm\sigma_{\textnormal{rev}}=  p \I+\sum_{i,j=1}^Mc_{ij}\(\nabla n_i\otimes\nabla n_j\).
\end{eqnarray}
 The last term on the right-hand side of \eqref{eqEntropySecondb} shall also be zero as 
\begin{eqnarray}\label{eqGeneralNSEQA}
\rho\frac{d \u}{d t}+\div\bm\sigma_{\textnormal{rev}}=0.
\end{eqnarray}

For the realistic irreversible   multi-component chemical systems, the driving force for diffusion of each component is the gradient of chemical potentials,  so we express the diffusion flux  for each component   as   \cite{Cogswell2010thesis,kouandsun2016multiscale}
\begin{eqnarray}\label{eqMultiCompononentMassConserveDiffusion}
  \bm J_{i}=-\sum_{j=1}^M\mathcal{M}_{ij}\nabla \mu_{j},~~i=1,\cdots, M,
\end{eqnarray}
where   $\bm{\mathcal{M}}=\(\mathcal{M}_{ij}\)_{i,j=1}^M$ is the mobility. Onsager's reciprocal principle \cite{Groot2015NET}  requires the  symmetry  of $\bm{\mathcal{M}}$, and moreover, the second law of thermodynamics demands that $\bm{\mathcal{M}}$ is  positive semidefinite or positive definite, i.e.
\begin{eqnarray}\label{eqMultiCompononentMassConserveDiffusionA}
 \sum_{i=1}^M\J_i\cdot\grad \mu_i= -\sum_{i,j=1}^M\mathcal{M}_{ij}\nabla \mu_{i}\cdot\nabla \mu_{j}\leq0,
\end{eqnarray}
which ensures the non-negativity of the first term on the right-hand side of \eqref{eqEntropySecondb}.
 In  general principle, $\bm J_{i}$ may   depend on the chemical potential gradients of other components except for component $i$. In numerical tests of this paper, we take a special case of $\bm{\mathcal{M}}$ that is a diagonal positive definite matrix; indeed, we use the following diffusion flux  \cite{Cogswell2010thesis,kousun2016Flash}
\begin{eqnarray}\label{eqMultiCompononentMassConserveDiffusionB}
\J_i  = -\frac{D_in_i}{RT}\grad\mu_i,
\end{eqnarray}
where $D_i>0$ is the diffusion coefficient of component $i$. 

For the realistic viscous  flow, the total stress can be split into two parts:  reversible part (i.e. $\bm\sigma_{\textnormal{rev}}$ given in \eqref{eqMulticomponentTotalStressA}) and irreversible part  (denoted by $\bm\sigma_{\textnormal{irrev}}$); that is,
\begin{eqnarray}\label{eqTotalStress}
   \bm\sigma=\bm\sigma_{\textnormal{rev}}+\bm\sigma_{\textnormal{irrev}}.
\end{eqnarray}
Newtonian fluid theory suggests
\begin{eqnarray}\label{eqTotalStressB}
   \bm\sigma_{\textnormal{irrev}}=-\eta\(\nabla\u+\nabla\u^T\)-\(\(\xi-\frac{2}{3}\eta\)\div\u\) \I,
\end{eqnarray}
where  $\xi$ is the volumetric  viscosity and   $\eta$ is the shear viscosity. We assume that $\eta>0$ and $\xi>\frac{2}{3}\eta$.
So the second term on the right-hand side of \eqref{eqEntropySecondb} is   non-negative.

The non-negativity of the last term on the right-hand side of \eqref{eqEntropySecondb} requires that
\begin{eqnarray}\label{eqGeneralNSEQA}
\rho\frac{d \u}{d t}+\sum_{i=1}^MM_{w,i}\J_i\cdot\grad\u+\div\bm\sigma=0.
\end{eqnarray}

Substituting \eqref{eqMulticomponentTotalStressA}, \eqref{eqTotalStress} and \eqref{eqTotalStressB} into \eqref{eqGeneralNSEQA},  we obtain the complete momentum balance equation as
\begin{eqnarray}\label{eqGeneralNSEQ}
&&\rho\(\frac{\partial  \u}{\partial t}+ \u\cdot\grad{ \u}\)+\sum_{i=1}^MM_{w,i}\J_i\cdot\grad\u=\nabla\(\(\xi-\frac{2}{3}\eta\)\div\u-p\)\nonumber\\
&&~~+\div\eta\(\nabla\u+\nabla\u^T\)
-\sum_{i,j=1}^M\div\(c_{ij}\nabla n_i\otimes\nabla n_j\),
\end{eqnarray}
which is also equivalent to 
\begin{eqnarray}\label{eqGeneralNSEQMC}
&&\frac{\partial \(\rho \u\)}{\partial t}+ \div{ \(\rho\u\otimes\u\)}+\sum_{i=1}^MM_{w,i}\div{ \(\u\otimes\J_i\)}=\nabla\(\(\xi-\frac{2}{3}\eta\)\div\u-p\)\nonumber\\
&&~~+\div\eta\(\nabla\u+\nabla\u^T\)
-\sum_{i,j=1}^M\div\(c_{ij}\nabla n_i\otimes\nabla n_j\).
\end{eqnarray}
If  $\J_i$ is taken such that $\sum_{i=1}^MM_{w,i}\J_i=0$, then \eqref{eqGeneralNSEQMC} is reduced into the form in \cite{kouandsun2016multiscale}, but for the general formulations of $\J_i$, the term $\sum_{i=1}^MM_{w,i}\J_i\cdot\grad\u$ or $\sum_{i=1}^MM_{w,i}\div{ \(\u\otimes\J_i\)}$ is essential to ensure the thermodynamical consistency   \cite{Abels2012TwoPhaseModel}.
 
In summary, the proposed mathematical model  of multi-component two-phase flow is formulated by a nonlinear and fully coupled  system of equations including the mass conservation equation \eqref{eqGeneralNSEQMass} coupling with the diffusion flux  \eqref{eqMultiCompononentMassConserveDiffusion} and the chemical potential \eqref{eqgeneralchemicalpotential},  and the momentum balance equation \eqref{eqGeneralNSEQ} or \eqref{eqGeneralNSEQMC} coupling with the pressure formulation \eqref{eqMultiComponentDefPresGeneralB}.

\section{Energy dissipation}

In this section, we will demonstrate the total (free) energy dissipation property of the proposed model, which seems not an obvious  conclusion from its derivation, but it is required for thermodynamical consistency and it is essential to design the stable and efficient numerical methods in the next section. To do this, we first reformulate the  momentum conservation equation by a simpler form. 
This requires the following theorem regarding the relations between  the gradients of the pressure and chemical potentials.
 \begin{thm}\label{lemPresChptlEqnMultiGrad}
The gradients of  the pressure and chemical potentials  have the following relation 
\begin{eqnarray}\label{eqPresChptlMultiGrad01}
 \sum_{i=1}^Mn_i\grad \mu_i 
 =\grad p+\sum_{i,j=1}^M\div\(c_{ij}\grad n_i\otimes\grad n_j\).
\end{eqnarray}
\end{thm}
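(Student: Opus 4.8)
The plan is to verify the identity by direct computation, transforming the left-hand side into the right-hand side using only the constitutive relations already established plus one tensor-calculus identity. First I would expand the left-hand side with the chemical-potential splitting \eqref{eqgeneralchemicalpotential}, $\mu_i=\mu_i^b-\sum_{j}\div(c_{ij}\grad n_j)$, which gives
\begin{eqnarray*}
\sum_{i=1}^M n_i\grad\mu_i = \sum_{i=1}^M n_i\grad\mu_i^b - \sum_{i,j=1}^M n_i\grad\big(\div(c_{ij}\grad n_j)\big).
\end{eqnarray*}
By relation \eqref{eqMultiCompononentMultiPresChptl01b} the first sum is exactly $\grad p_b$, so the left-hand side reduces to $\grad p_b-\sum_{i,j} n_i\grad(\div(c_{ij}\grad n_j))$. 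This is the target form that the right-hand side must reproduce.

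Next I would differentiate the pressure formula \eqref{eqMultiComponentDefPresGeneralB}. Applying $\grad$ to $p=p_b-\sum_{i,j}n_i\div(c_{ij}\grad n_j)-\frac12\sum_{i,j}c_{ij}\grad n_i\cdot\grad n_j$ and using the product rule on the middle term yields $\grad p_b$, the wanted contribution $-\sum_{i,j}n_i\grad(\div(c_{ij}\grad n_j))$, and two residual terms: $-\sum_{i,j}(\grad n_i)\div(c_{ij}\grad n_j)$ and $-\frac12\grad(\sum_{i,j}c_{ij}\grad n_i\cdot\grad n_j)$. Adding $\sum_{i,j}\div(c_{ij}\grad n_i\otimes\grad n_j)$ to $\grad p$ then isolates the combination of these two residuals against the outer-product divergence.

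The crux is to show that this residual combination vanishes. That is precisely the tensor identity recorded just before the theorem,
\begin{eqnarray*}
\sum_{i,j=1}^M(\grad n_i)\div(c_{ij}\grad n_j)+\frac12\sum_{i,j=1}^M\grad(c_{ij}\grad n_i\cdot\grad n_j)=\sum_{i,j=1}^M\div(c_{ij}\grad n_i\otimes\grad n_j),
\end{eqnarray*}
so the residual is identically zero and $\grad p+\sum_{i,j}\div(c_{ij}\grad n_i\otimes\grad n_j)$ collapses to $\grad p_b-\sum_{i,j}n_i\grad(\div(c_{ij}\grad n_j))$, matching the left-hand side. I expect the main obstacle to be the bookkeeping of the double sum: establishing the tensor identity itself, via $\div(\grad n_i\otimes\grad n_j)=(\Delta n_i)\grad n_j+(\grad n_i\cdot\grad)\grad n_j$ and $\grad(\grad n_i\cdot\grad n_j)=(\grad n_i\cdot\grad)\grad n_j+(\grad n_j\cdot\grad)\grad n_i$, relies essentially on the symmetry $c_{ij}=c_{ji}$ to relabel indices and cancel the $(\Delta n)\grad n$ terms and the Hessian contractions in pairs. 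Keeping the constancy of $c_{ij}$ and this index symmetry straight through the manipulations is where care is needed; the remaining steps are routine differentiation.
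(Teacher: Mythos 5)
Your proposal is correct and follows essentially the same route as the paper's proof: expand $\mu_i$ and $p$ via \eqref{eqgeneralchemicalpotential} and \eqref{eqMultiComponentDefPresGeneralB}, cancel $\grad p_b=\sum_{i=1}^M n_i\grad\mu_i^b$ using \eqref{eqMultiCompononentMultiPresChptl01b}, and absorb the residual terms $\sum_{i,j=1}^M(\grad n_i)\div\(c_{ij}\grad n_j\)+\frac{1}{2}\sum_{i,j=1}^M\grad\(c_{ij}\grad n_i\cdot\grad n_j\)$ by the tensor identity stated before the theorem. The only difference is organizational (you expand the two sides separately rather than computing $\sum_{i=1}^M n_i\grad\mu_i-\grad p$ in one chain) plus your welcome sketch of the tensor identity itself, which the paper asserts without proof and which, as you correctly note, relies on the symmetry and constancy of $c_{ij}$.
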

\begin{proof}
We recall the relation $\grad p_b=\sum_{i=1}^Mn_i\grad\mu^b_i$ proved in \eqref{eqMultiCompononentMultiPresChptl01b}, and then 
taking into account    chemical potential formulation \eqref{eqgeneralchemicalpotential} and pressure formulation \eqref{eqMultiComponentDefPresGeneralB}, we obtain
\begin{eqnarray*}\label{eqPresChptlMultiGradproof01}
\sum_{i=1}^Mn_i\grad \mu_i-\grad p 
 &=&\sum_{i=1}^Mn_i\grad\(\mu_i^b-\sum_{j=1}^M\div\(c_{ij}\grad{n_j}\)\)\nonumber\\
 &&-\grad\(p_b- \sum_{i,j=1}^Mn_i\div\(c_{ij}\grad{n_j}\)-\frac{1}{2} \sum_{i,j=1}^Mc_{ij} \nabla n_i\cdot\nabla n_j\)\nonumber\\
 &=&\sum_{i=1}^Mn_i\grad\mu_i^b-\grad p_b-\sum_{i=1}^Mn_i\grad\(\sum_{j=1}^M\div\(c_{ij}\grad{n_j}\)\)\nonumber\\
 &&+\sum_{i,j=1}^M\grad\big(n_i\div\(c_{ij}\grad{n_j}\)\big)+\frac{1}{2} \sum_{i,j=1}^M\grad\(c_{ij} \nabla n_i\cdot\nabla n_j\)\nonumber\\
 &=&\sum_{i,j=1}^M\(\div\(c_{ij}\grad{n_j}\)\)\grad n_i+\frac{1}{2} \sum_{i,j=1}^M\grad\(c_{ij} \nabla n_i\cdot\nabla n_j\)\nonumber\\
 &=&\sum_{i,j=1}^M\div\(c_{ij}\grad n_i\otimes\grad n_j\).
\end{eqnarray*}
Thus,  \eqref{eqPresChptlMultiGrad01} is obtained. 
\end{proof}

As a direct application  of \eqref{eqPresChptlMultiGrad01}, the momentum conservation equation \eqref{eqGeneralNSEQ} is  reformulated as
\begin{eqnarray}\label{eqGeneralNSEQb}
&&\rho\(\frac{\partial  \u}{\partial t}+ \u\cdot\grad{ \u}\)+\sum_{i=1}^MM_{w,i}\J_i\cdot\grad\u=-\sum_{i=1}^Mn_i\grad \mu_i\nonumber\\
   &&~~+\div\(\eta\(\nabla\u+\nabla\u^T\)+\(\xi-\frac{2}{3}\eta\)\(\div\u\) \I\) ,
\end{eqnarray}
which indicates that the  fluid motion is indeed driven by the gradients of chemical potentials.

Applying the formulation of momentum conservation equation given in \eqref{eqGeneralNSEQb}, we can conveniently  prove the total (free) energy dissipation property of the proposed model.  We use $\(\cdot,\cdot\)$ and $\|\cdot\|$ to represent the $L^2\(\Omega\)$, $\(L^2\(\Omega\)\)^d$ or $\(L^2\(\Omega\)\)^{d\times d}$ inner product and norm respectively. 
We define  the Helmholtz free energy and  kinetic energy within the entire domain $\Omega$ as
\begin{eqnarray}\label{eqPDETotalEnergy}
F=\int_{\Omega} fd\x,~~~~~E=\frac{1}{2}\int_{\Omega}\rho|\u|^2d\x.
\end{eqnarray}
\begin{thm}
The sum  of    the Helmholtz free energy and  kinetic energy   is dissipated with time, i.e.
\begin{eqnarray}\label{eqPDETotalEnergyDecay}
\frac{\partial (F+E)}{\partial t} \leq 0.
\end{eqnarray}
\end{thm}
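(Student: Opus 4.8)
The plan is to differentiate $F$ and $E$ separately in time over the fixed domain $\Omega$, use the no-flux boundary conditions \eqref{eqEntropySecondb} to discard every divergence term, and then exhibit an exact cancellation between the reversible (pressure and density-gradient) contributions, so that only manifestly non-positive dissipation survives. First I would evaluate $\frac{\partial F}{\partial t}=\int_\Omega\frac{\partial f}{\partial t}\,d\x$ by integrating the transport equation \eqref{eqMultiCompononentHelmholtzMultiTransport05b}. Since $\Omega$ is time-independent, $\int_\Omega\div(f\u)\,d\x=0$ by $\u\cdot\bm\gamma_{\partial\Omega}=0$, and every remaining divergence term vanishes: those carrying $\u$ because $\u\cdot\bm\gamma_{\partial\Omega}=0$, those carrying $\J_i$ because $\J_i\cdot\bm\gamma_{\partial\Omega}=0$, and those carrying $c_{ij}\grad n_i$ because $\grad n_i\cdot\bm\gamma_{\partial\Omega}=0$. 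This leaves
\begin{eqnarray*}
\frac{\partial F}{\partial t}&=&-\int_\Omega p\,\div\u\,d\x-\int_\Omega\Big(\sum_{i,j=1}^Mc_{ij}\grad n_i\otimes\grad n_j\Big):\grad\u\,d\x\\
&&+\int_\Omega\sum_{i=1}^M\J_i\cdot\grad\mu_i\,d\x.
\end{eqnarray*}

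Next I would compute $\frac{\partial E}{\partial t}=\int_\Omega\big(\rho\u\cdot\partial_t\u+\tfrac12|\u|^2\partial_t\rho\big)\,d\x$, substituting the mass balance \eqref{eqMultiCompononentMassConserveMass} for $\partial_t\rho$. The purely convective pieces assemble into $\tfrac12\div(|\u|^2\rho\u)$, which integrates to zero, while the mass-diffusion piece $-\tfrac12|\u|^2\sum_i M_{w,i}\div\J_i$ is integrated by parts (using $\J_i\cdot\bm\gamma_{\partial\Omega}=0$ and the pointwise identity $\u\cdot\big(\sum_i M_{w,i}\J_i\cdot\grad\u\big)=\tfrac12\sum_i M_{w,i}\J_i\cdot\grad|\u|^2$) to reproduce exactly the mass-diffusion term of the momentum equation. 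Hence $\frac{\partial E}{\partial t}=\int_\Omega\u\cdot\big(\rho\frac{d\u}{dt}+\sum_i M_{w,i}\J_i\cdot\grad\u\big)\,d\x$, into which I substitute the reformulated momentum equation \eqref{eqGeneralNSEQb}. Integrating the Newtonian stress by parts then gives
\begin{eqnarray*}
\frac{\partial E}{\partial t}&=&-\int_\Omega\u\cdot\sum_{i=1}^Mn_i\grad\mu_i\,d\x\\
&&-\int_\Omega\Big(\frac{\eta}{2}\big|\grad\u+\grad\u^T\big|^2+\big(\xi-\tfrac23\eta\big)(\div\u)^2\Big)\,d\x,
\end{eqnarray*}
where the last integral is non-negative because $\eta>0$ and $\xi>\tfrac23\eta$.

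The crux is the cancellation of the reversible terms when the two derivatives are added, and this is precisely where Theorem \ref{lemPresChptlEqnMultiGrad} does all the work. Replacing $\sum_i n_i\grad\mu_i$ by $\grad p+\sum_{i,j}\div(c_{ij}\grad n_i\otimes\grad n_j)$ in the expression for $\frac{\partial E}{\partial t}$ and integrating each piece by parts (boundary terms again vanishing by \eqref{eqEntropySecondb}), the driving term $-\int_\Omega\u\cdot\sum_i n_i\grad\mu_i\,d\x$ becomes exactly $+\int_\Omega p\,\div\u\,d\x+\int_\Omega\big(\sum_{i,j}c_{ij}\grad n_i\otimes\grad n_j\big):\grad\u\,d\x$, which are the negatives of the first two terms of $\frac{\partial F}{\partial t}$. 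Adding the two derivatives therefore yields
\begin{eqnarray*}
\frac{\partial(F+E)}{\partial t}&=&\int_\Omega\sum_{i=1}^M\J_i\cdot\grad\mu_i\,d\x\\
&&-\int_\Omega\Big(\frac{\eta}{2}\big|\grad\u+\grad\u^T\big|^2+\big(\xi-\tfrac23\eta\big)(\div\u)^2\Big)\,d\x.
\end{eqnarray*}
By \eqref{eqMultiCompononentMassConserveDiffusionA} the first integrand equals $-\sum_{i,j}\mathcal{M}_{ij}\grad\mu_i\cdot\grad\mu_j\le0$ since $\bm{\mathcal M}$ is positive semidefinite, and the viscous integral is non-negative, giving $\frac{\partial(F+E)}{\partial t}\le0$.

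I expect the main obstacle to be the bookkeeping in $\frac{\partial E}{\partial t}$, specifically verifying that the mass-diffusion correction $\sum_i M_{w,i}\J_i\cdot\grad\u$ appearing in the momentum equation is exactly the quantity produced by the $\partial_t\rho$ term through the identity above, together with the sign-matched cancellation of the reversible terms. Everything else is routine integration by parts once the boundary conditions are used, and the sign of the final two integrals follows directly from the positivity assumptions on $\bm{\mathcal M}$, $\eta$ and $\xi$.
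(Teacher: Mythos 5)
Your proof is correct, but it takes a genuinely different route from the paper's on the free-energy side. The paper never integrates the transport equation \eqref{eqMultiCompononentHelmholtzMultiTransport05b} in its energy proof: instead it tests the mass balance \eqref{eqGeneralNSEQMass} with $\mu_i$, identifies $\sum_{i=1}^M\big(\partial_t n_i,\mu_i\big)$ with $\partial F/\partial t$ via the chain rule for $f_b$ and the symmetry of $c_{ij}$ (its equation \eqref{eqMultiCompononentMassConserveProofC02}), and cancels the convective cross-terms through the elementary identity $\big(n_i\grad\mu_i,\u\big)+\big(\div(n_i\u),\mu_i\big)=\int_\Omega\div\big(n_i\mu_i\u\big)\,d\x=0$, so that Theorem \ref{lemPresChptlEqnMultiGrad} enters only once, through the reformulated momentum equation \eqref{eqGeneralNSEQb}. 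You instead integrate \eqref{eqMultiCompononentHelmholtzMultiTransport05b} over $\Omega$, discard the divergence terms by the natural boundary conditions, and then invoke Theorem \ref{lemPresChptlEqnMultiGrad} a \emph{second} time to convert $-\int_\Omega\u\cdot\sum_i n_i\grad\mu_i\,d\x$ back into pressure work and Korteweg-stress work --- in effect undoing the reformulation and arguing at the level of the original momentum equation \eqref{eqGeneralNSEQ}. Your kinetic-energy computation coincides with the paper's, including the shared implicit assumption that the viscous boundary term vanishes when the stress is integrated by parts (the paper makes the same step in \eqref{eqMultiComponentMomentumConserveProof01}, so this is not a gap relative to it), and both routes land on exactly the dissipation identity \eqref{eqMultiComponentMomentumConserveMassConserv01}. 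What your version buys is physical transparency: the cancellation of $p\,\div\u$ and of the capillary-tensor working between $F$ and $E$ is explicit, and it recycles machinery already derived for the entropy balance in Section 3. What the paper's version buys is economy and, more importantly, a template that survives discretization: the proof of the discrete dissipation \eqref{eqDiscreteTotalEnergyDecay} mirrors the weak pairing with $\mu_i^{k+1}$ verbatim (with convexity replacing the chain rule), whereas your route would require a discrete analogue of the pointwise transport equation for $f$, which the numerical scheme does not provide.
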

\begin{proof}
Taking into account the mass balance equation \eqref{eqMultiCompononentMassConserveMass},
we obtain
\begin{eqnarray}\label{eqMultiComponentMomentumConserveProof01}
\frac{\partial E}{\partial t}&=&\(\rho\frac{\partial \u}{\partial t},\u\)+\frac{1}{2}\(\frac{\partial \rho}{\partial t},|\u|^2\)\nonumber\\
&=&\(\rho\frac{\partial \u}{\partial t},\u\)-\frac{1}{2}\(\div(\rho\u)+\sum_{i=1}^MM_{w,i}\div\J_i,|\u|^2\)\nonumber\\
&=&\(\rho\frac{\partial \u}{\partial t}+\rho\u\cdot\grad\u+\sum_{i=1}^MM_{w,i}\J_i\cdot\grad\u,\u\)\nonumber\\
&=&-\sum_{i=1}^M\(n_i\grad \mu_i,\u\)+\(\div\(\eta\(\nabla\u+\nabla\u^T\)+\(\xi-\frac{2}{3}\eta\)\(\div\u\) \I\), \u\)\nonumber\\
&=&-\sum_{i=1}^M\(n_i\grad \mu_i,\u\) -\(\eta\(\nabla\u+\nabla\u^T\)+\(\xi-\frac{2}{3}\eta\)\(\div\u\) \I, \grad\u\)\nonumber\\
&=&-\sum_{i=1}^M\(n_i\grad \mu_i,\u\) -\(\eta\(\nabla\u+\nabla\u^T\), \grad\u\)-\(\(\xi-\frac{2}{3}\eta\)\div\u, \div\u\)\nonumber\\
&=&-\sum_{i=1}^M\(n_i\grad \mu_i,\u\) -\frac{1}{2}\left\|\eta^{1/2}\(\nabla\u+\nabla\u^T\)\right\|^2-\left\|\(\xi-\frac{2}{3}\eta\)^{1/2}\div\u\right\|^2.
\end{eqnarray}
Multiplying both sides of \eqref{eqGeneralNSEQMass} by $\mu_i$ and integrating it over $\Omega$,  we obtain
\begin{eqnarray}\label{eqMultiCompononentMassConserveProofC01}
\(\frac{\partial n_i}{\partial t},\mu_i\)+\(\div(n_i\u),\mu_i\)=\(\J_i,\grad\mu_i\).
\end{eqnarray}
Applying the formulation  of $\mu_i$, we derive the summation of the first term on the left-hand side of \eqref{eqMultiCompononentMassConserveProofC01} as
\begin{eqnarray}\label{eqMultiCompononentMassConserveProofC02}
\sum_{i=1}^M\(\frac{\partial n_i}{\partial t},\mu_i\)&=&\sum_{i=1}^M\(\frac{\partial n_i}{\partial t},\mu_i^b-\sum_{j=1}^M\div\(c_{ij}\grad{n_j}\)\)\nonumber\\
&=&\(\frac{\partial f_b}{\partial t},1\)-\sum_{i,j=1}^M\(\frac{\partial n_i}{\partial t},\div\(c_{ij}\grad{n_j}\)\)\nonumber\\
&=&\(\frac{\partial f_b}{\partial t},1\)+\sum_{i,j=1}^M\(\grad\frac{\partial n_i}{\partial t},c_{ij}\grad{n_j}\)\nonumber\\
&=&\(\frac{\partial f_b}{\partial t},1\)+\frac{1}{2}\frac{\partial}{\partial t}\sum_{i,j=1}^M\(c_{ij}\grad n_i,\grad{n_j}\)\nonumber\\
&=&\frac{\partial F}{\partial t}.
\end{eqnarray}
We denote $\bm\mu=[\mu_1,\cdots,\mu_M]^T$, and further define the norm $$\big\|\grad\bm\mu\big\|_{M}^2=\sum_{i,j=1}^M\big(\mathcal{M}_{ij}\nabla \mu_{i},\grad\mu_j\big).$$ Using \eqref{eqMultiCompononentMassConserveProofC02} and \eqref{eqMultiCompononentMassConserveDiffusion}, we have the summation of \eqref{eqMultiCompononentMassConserveProofC01}  from $i=1$ to $M$  as
\begin{eqnarray}\label{eqMultiCompononentMassConserveProofC03}
\frac{\partial F}{\partial t}=-\sum_{i=1}^M\(\div(n_i\u),\mu_i\)-\big\|\grad\bm\mu\big\|_{M}^2.
\end{eqnarray}
We combine  \eqref{eqMultiComponentMomentumConserveProof01} and \eqref{eqMultiCompononentMassConserveProofC03}
\begin{eqnarray}\label{eqMultiComponentMomentumConserveMassConserv01}
\frac{\partial \(F+E\)}{\partial t}&=& -\big\|\grad\bm\mu\big\|_{M}^2 -\frac{1}{2}\left\|\eta^{1/2}\(\nabla\u+\nabla\u^T\)\right\|^2 -\left\|\(\xi-\frac{2}{3}\eta\)^{1/2}\div\u\right\|^2.
\end{eqnarray}
where we have also used $$\(n_i\grad \mu_i,\u\)+\(\div(n_i\u),\mu_i\)=\(\div(n_i\mu_i\u),1\)=0.$$
 Therefore,  the energy dissipation \eqref{eqPDETotalEnergyDecay} can be obtained from \eqref{eqMultiComponentMomentumConserveMassConserv01} and \eqref{eqMultiCompononentMassConserveDiffusionA}. 
\end{proof}

\section{Energy-stable numerical method}
In this section,  we will design an energy-dissipated semi-implicit time marching scheme for simulating  the proposed multi-component fluid model. The key difficulties  result from the complication of Helmholtz free energy density and fully coupling relations between molar densities and velocity.  We resolve these problems  through very careful physical observations with a mathematical rigor.

The gradient contribution to the Helmholtz free energy density is always convex with respect to  molar densities, so it shall be  treated implicitly.  The challenge   is to deal with the bulk Helmholtz free energy density $f_b$.  For the pure substance,  the ideal and repulsion terms, i.e. $f_b^{\textnormal{ideal}}$ and $f_b^{\textnormal{repulsion}}$,   result in convex contributions to the  Helmholtz free energy density \cite{qiaosun2014}.   For the multi-component mixtures ($M\geq2$), we can prove the ideal contribution $f_b^{\textnormal{ideal}}$ is still convex with respect to  molar densities since its Hessian matrix is a diagonal positive definite matrix.  However, the repulsion term $f_b^{\textnormal{repulsion}}$ for multi-component mixtures is never convex due to cross products of multiple components, and it can be easily  verified for the binary mixtures through  eigenvalue calculations (we omit  the details here).  
For the pure substance, 
the attraction term $f_b^{\textnormal{attraction}}$ is proved to result in a concave contribution to the Helmholtz free energy density  \cite{qiaosun2014}. But it may be not true for multi-component mixture; in fact,  numerical tests show that the maximum  eigenvalues of its Hessian matrix may be slightly larger than zero in some cases (not presented here).  

In order to construct a strict convex-concave splitting for the case of multi-component mixture, we impose two auxiliary terms   on the bulk Helmholtz free energy density. One term is the additional  ideal term for the reason that the ideal term is a good approximation of the behavior of many gases and always convex. The other is a separate repulsion term,  denoted by $ f_b^{\textnormal{SR}}$,  which reduces the cross products of multiple components in the original repulsion term and is expressed as
 \begin{eqnarray}\label{eqHelmholtzEnergy_a0_02R}
    f_b^{\textnormal{SR}}(\n)=-RT\sum_{i=1}^Mn_i\ln\(1-b_in_i\).
\end{eqnarray}
We note that $bn=\sum_{i=1}^Mb_in_i<1$ in terms of its physical meaning, so $f_b^{\textnormal{SR}}$ is well defined. We define the summation of the above two auxiliary terms as
 \begin{eqnarray}\label{eqHelmholtzEnergy_a0_02R}
 f_b^{\textnormal{auxiliary}}(\n)&=&f_b^{\textnormal{ideal}}(\n)+   f_b^{\textnormal{SR}}(\n)\nonumber\\
 &=&RT\sum_{i=1}^{M}n_i\(\ln n_i-1\)-RT\sum_{i=1}^Mn_i\ln\(1-b_in_i\),
\end{eqnarray}
which has a diagonal positive definite Hessian matrix
 \begin{eqnarray}\label{eqHessianMat01}
\frac{\partial^2f_b^{\textnormal{auxiliary}}(\n) }{\partial n_i\partial n_i} =\frac{RT}{n_i} +\frac{RTb_i}{1-b_in_i}+\frac{RTb_i}{\(1-b_in_i\)^2} ,~~\frac{\partial^2f_b^{\textnormal{auxiliary}} }{\partial n_i\partial n_j} =0, ~i\neq j.
\end{eqnarray}

We now state a convex-concave splitting of the bulk Helmholtz free energy density based on the above auxiliary terms. Let us define a parameter $\lambda>0$, and then we reformulate the bulk Helmholtz free energy density   as
\begin{eqnarray}\label{eqConvexConcaveHelmholtzEnergy}
    f_b(\n)= f_b^{\textnormal{convex}}(\n) +f_b^{\textnormal{concave}}(\n),
\end{eqnarray}
where
\begin{eqnarray}\label{eqConvexHelmholtzEnergy}
    f_b^{\textnormal{convex}}(\n)= f_b^{\textnormal{ideal}}(\n) +f_b^{\textnormal{repulsion}}(\n)+\lambda f_b^{\textnormal{auxiliary}}(\n),
\end{eqnarray}
\begin{eqnarray}\label{eqConcaveHelmholtzEnergy}
    f_b^{\textnormal{concave}}(\n)= f_b^{\textnormal{attraction}}(\n)-\lambda f_b^{\textnormal{auxiliary}}(\n) .
\end{eqnarray}
The chemical potentials can be reformulated accordingly
\begin{eqnarray}\label{eqConvexChPtl}
    \mu_i^b(\n)= \mu_i^{b,\textnormal{convex}}(\n) +\mu_i^{b,\textnormal{concave}}(\n).
\end{eqnarray}
We make some remarks on the convex-concave splitting given in \eqref{eqConvexConcaveHelmholtzEnergy}-\eqref{eqConcaveHelmholtzEnergy}:\\
(a)  The reason that $f_b^{\textnormal{repulsion}}$ is considered in the convex part is an observation in numerical tests  that  positive eigenvalues of $f_b^{\textnormal{repulsion}}$ are usually  large, while  its negative eigenvalues are only slightly less than zero.  In contrast, $f_b^{\textnormal{attraction}}$ is included in the concave part because numerical tests show that its negative  eigenvalues are usually  far from zero, while  its positive  eigenvalues are only slightly larger than zero. Moreover, the attraction force shall result in a concave contribution from  the point view of physics \cite{qiaosun2014}. \\
(b) Because of strong convexity of $f_b^{\textnormal{auxiliary}}$,  if we choose a sufficiently large $\lambda$, the strict convex-concave splitting can be achieved for the   bulk Helmholtz free energy density of  multi-component mixture. But a very large value for $\lambda$ is not necessary in practical computations,   and in fact, we just take $\lambda=1$ in our numerical tests, which is enough  to gain the numerical convex-concave splitting.  Consequently,  we  assume  that there exists a suitable $\lambda>0$ such that the convexity of $f_b^{\textnormal{convex}}(\n)$  and concavity of $f_b^{\textnormal{concave}}(\n)$ hold.

We consider a  time interval $\mathcal{I}=(0,T_{f}]$, where $T_{f}>0$.  We divide $\mathcal{I}$ into   $\mathcal{K}$ subintervals $\mathcal{I}_{k}=(t_{k},t_{k+1}]$, where $t_{0}=0$ and $t_{M}=T_{f}$. Furthermore, we denote $\delta t_{k}=t_{k+1}-t_{k}$.  For any scalar $v(t)$ or vector $\v(t)$, we denote by $v^{k}$ or $\v^{k}$ its approximation at the time $t_{k}$.
We now state the semi-implicit time marching scheme.  
First, a semi-implicit time   scheme is constructed for the molar density balance equation \eqref{eqGeneralNSEQMass}  as
\begin{eqnarray}\label{eqDiscreteMultiCompononentMassConserve01}
\frac{ n_i^{k+1}-n_i^{k}}{\delta t_{k}}+\div(n_i^{k+1}\u^{k+1})+\div\J_i^{k+1}=0,
\end{eqnarray}
where
\begin{eqnarray}\label{eqDiscreteMultiCompononentMassConserve02}
\J_i^{k+1} = -\sum_{j=1}^M\mathcal{M}^k_{ij}\nabla \mu_{j}^{k+1},
\end{eqnarray}
 \begin{eqnarray}\label{eqDiscreteMultiCompononentMassConserve03}
\mu_i^{k+1}=\mu_i^{b,\textnormal{convex}}\(\n^{k+1}\)+\mu_i^{b,\textnormal{concave}}\(\n^{k}\)-\sum_{j=1}^M\div\(c_{ij}\grad{n_j^{k+1}}\).
  \end{eqnarray}
  We note that the mobility coefficients  $\mathcal{M}_{ij}$ generally depend on the molar densities, so we use $\mathcal{M}^k_{ij}$ to denote their values computed from molar densities $\n^k$.
  
  In the previous section, we have reformulated the momentum balance equation by the form \eqref{eqGeneralNSEQb} coupling with the chemical potentials rather than pressure.
A semi-implicit time   scheme  for  the momentum conservation equation \eqref{eqGeneralNSEQb} is constructed  as
\begin{eqnarray}\label{eqDiscreteMultiComponentMomentumConserveC01}
&& \rho^{k}\frac{\u^{k+1}-\u^{k}}{\delta t_{k}}+\rho^{k+1}\(\u^{k+1}\cdot\grad\)\u^{k+1}+\sum_{i=1}^MM_{w,i}\J_i^{k+1}\cdot\grad\u^{k+1}=-\sum_{i=1}^Mn_i^{k+1}\grad \mu_i^{k+1}\nonumber\\
   &&~~+\div\(\eta\(\nabla\u^{k+1}+\(\nabla\u^{k+1}\)^T\)+\(\xi-\frac{2}{3}\eta\)\(\div\u^{k+1}\) \I\) .
\end{eqnarray}
Here, the viscosities $\eta$ and $\xi$ are assumed to be constant.

We now demonstrate that the proposed semi-implicit time scheme satisfies the dissipation property of the total (free) energy. We define  the discrete Helmholtz free energy and   discrete kinetic energy at the time $t_k$ as
\begin{eqnarray}\label{eqPDETotalEnergy}
F^k=\int_{\Omega} f\(\n^k\)d\x,~~~~~E^k=\frac{1}{2}\int_{\Omega}\rho^{k}|\u^{k}|^2d\x.
\end{eqnarray}

\begin{thm}
The sum of    the discrete Helmholtz free energy and  discrete kinetic energy   is dissipated with time steps, i.e.
\begin{eqnarray}\label{eqDiscreteTotalEnergyDecay}
E^{k+1}+F^{k+1} \leq E^{k}+F^{k} .
\end{eqnarray}
\end{thm}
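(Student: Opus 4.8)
The plan is to reproduce, at the discrete level, the continuous energy-dissipation argument of the preceding section. I would test the discrete momentum equation \eqref{eqDiscreteMultiComponentMomentumConserveC01} against $\u^{k+1}$ to control the kinetic energy increment $E^{k+1}-E^k$, test the discrete mass-balance equation \eqref{eqDiscreteMultiCompononentMassConserve01} against $\mu_i^{k+1}$ and sum over $i$ to control the free energy increment $F^{k+1}-F^k$, and then add the two resulting relations so that the convective cross terms cancel exactly as in the continuous case. Throughout I would use the boundary conditions to discard boundary integrals under integration by parts, and assume that the influence matrix $(c_{ij})$ and the mobility matrix $\mathcal{M}^k$ are symmetric positive semidefinite (the latter being precisely \eqref{eqMultiCompononentMassConserveDiffusionA}). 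The target is an inequality of the form $(E^{k+1}+F^{k+1})-(E^k+F^k)\le-(\text{nonnegative dissipation})$.

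For the kinetic energy, the key tool is the elementary identity $v\cdot(v-w)=\frac{1}{2}(|v|^2-|w|^2+|v-w|^2)$ applied to the backward-difference term $\rho^k\frac{\u^{k+1}-\u^k}{\delta t_k}$, together with the mass balance summed over components (multiply \eqref{eqDiscreteMultiCompononentMassConserve01} by $M_{w,i}$ and sum, giving $\frac{\rho^{k+1}-\rho^k}{\delta t_k}+\div(\rho^{k+1}\u^{k+1})+\sum_iM_{w,i}\div\J_i^{k+1}=0$). Integrating by parts, this lets me convert the convective term $\rho^{k+1}(\u^{k+1}\cdot\grad)\u^{k+1}$ and the diffusion-correction term $\sum_iM_{w,i}\J_i^{k+1}\cdot\grad\u^{k+1}$ into $\frac{1}{2\delta t_k}(\rho^{k+1}-\rho^k,|\u^{k+1}|^2)$. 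I expect the left-hand side tested against $\u^{k+1}$ to equal exactly $\frac{E^{k+1}-E^k}{\delta t_k}+\frac{1}{2\delta t_k}(\rho^k,|\u^{k+1}-\u^k|^2)$, where the second summand is a nonnegative numerical dissipation; testing the right-hand side yields $-\sum_i(n_i^{k+1}\grad\mu_i^{k+1},\u^{k+1})$ plus the nonnegative viscous dissipation after integration by parts, mirroring \eqref{eqMultiComponentMomentumConserveProof01}.

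The crux, and the step I expect to be the main obstacle, is the free energy increment, which is exactly where the convex-concave splitting earns its keep. I would prove $F^{k+1}-F^k\le\sum_i(n_i^{k+1}-n_i^k,\mu_i^{k+1})$ by splitting $f=f_b^{\textnormal{convex}}+f_b^{\textnormal{concave}}+f_\grad$. Convexity of $f_b^{\textnormal{convex}}$ in \eqref{eqConvexHelmholtzEnergy} gives the tangent-line bound $f_b^{\textnormal{convex}}(\n^{k+1})-f_b^{\textnormal{convex}}(\n^k)\le\sum_i\mu_i^{b,\textnormal{convex}}(\n^{k+1})(n_i^{k+1}-n_i^k)$, while concavity of $f_b^{\textnormal{concave}}$ in \eqref{eqConcaveHelmholtzEnergy} gives $f_b^{\textnormal{concave}}(\n^{k+1})-f_b^{\textnormal{concave}}(\n^k)\le\sum_i\mu_i^{b,\textnormal{concave}}(\n^k)(n_i^{k+1}-n_i^k)$. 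These two inequalities point the correct way only because the scheme \eqref{eqDiscreteMultiCompononentMassConserve03} evaluates the convex part implicitly at $\n^{k+1}$ and the concave part explicitly at $\n^k$; a naive fully implicit or fully explicit treatment would reverse a sign and destroy the estimate. For the quadratic gradient energy I would use the polarization identity, obtaining $F_\grad^{k+1}-F_\grad^k=\sum_{i,j}(c_{ij}\grad n_i^{k+1},\grad(n_j^{k+1}-n_j^k))-\frac{1}{2}\sum_{i,j}(c_{ij}\grad(n_i^{k+1}-n_i^k),\grad(n_j^{k+1}-n_j^k))$, and discard the nonnegative remainder; integrating the leading term by parts reproduces exactly the term $-\sum_j\div(c_{ij}\grad n_j^{k+1})$ of $\mu_i^{k+1}$. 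Summing the three contributions gives the discrete analogue of \eqref{eqMultiCompononentMassConserveProofC02}, now an inequality rather than an identity.

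Finally, testing \eqref{eqDiscreteMultiCompononentMassConserve01} against $\mu_i^{k+1}$ and substituting the flux \eqref{eqDiscreteMultiCompononentMassConserve02} produces the diffusion dissipation $-\sum_{i,j}(\mathcal{M}_{ij}^k\grad\mu_j^{k+1},\grad\mu_i^{k+1})\le0$, so that $\frac{F^{k+1}-F^k}{\delta t_k}\le-\sum_i(\div(n_i^{k+1}\u^{k+1}),\mu_i^{k+1})-\sum_{i,j}(\mathcal{M}_{ij}^k\grad\mu_j^{k+1},\grad\mu_i^{k+1})$. Adding this to the kinetic energy relation, the cross terms cancel because $\sum_i\big[(n_i^{k+1}\grad\mu_i^{k+1},\u^{k+1})+(\div(n_i^{k+1}\u^{k+1}),\mu_i^{k+1})\big]=\sum_i(\div(n_i^{k+1}\mu_i^{k+1}\u^{k+1}),1)=0$ by the divergence theorem and the impermeability boundary condition on $\u^{k+1}$. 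What remains is $(E^{k+1}+F^{k+1})-(E^k+F^k)\le-\frac{1}{2}(\rho^k,|\u^{k+1}-\u^k|^2)-(\text{viscous dissipation})-\delta t_k\sum_{i,j}(\mathcal{M}_{ij}^k\grad\mu_j^{k+1},\grad\mu_i^{k+1})\le0$, which is the assertion \eqref{eqDiscreteTotalEnergyDecay}.
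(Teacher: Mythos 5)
Your proposal is correct and follows essentially the same route as the paper's own proof: testing the discrete mass balance against $\mu_i^{k+1}$ and the discrete momentum equation against $\u^{k+1}$, using the tangent-line bounds from the implicit-convex/explicit-concave splitting, the symmetry (and implicit positive semidefiniteness) of $(c_{ij})$ for the gradient energy, the summed mass balance plus the identity $v\cdot(v-w)=\frac{1}{2}\left(|v|^2-|w|^2+|v-w|^2\right)$ for the kinetic part, and the cancellation $\left(n_i^{k+1}\grad\mu_i^{k+1},\u^{k+1}\right)+\left(\div(n_i^{k+1}\u^{k+1}),\mu_i^{k+1}\right)=0$. Your only departures are cosmetic refinements — making the positive semidefiniteness of the influence matrix explicit and retaining the numerical dissipation term $\frac{1}{2}\left(\rho^k,|\u^{k+1}-\u^k|^2\right)$ that the paper simply drops via an inequality.
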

\begin{proof}
Multiplying both sides of \eqref{eqDiscreteMultiCompononentMassConserve01} by $\mu_i^{k+1}$ and integrating it over $\Omega$,   we obtain
\begin{eqnarray}\label{eqDiscreteMultiCompononentMassConserveProof01}
\(\frac{ n_i^{k+1}-n_i^{k}}{\delta t_{k}},\mu_i^{k+1}\)+\(\div(n_i^{k+1}\u^{k+1}),\mu_i^{k+1}\)+\(\div\J_i^{k+1},\mu_i^{k+1}\)=0.
\end{eqnarray}
We define the bulk and gradient contributions of the discrete Helmholtz free energy  as
\begin{eqnarray}\label{eqCHEConvexSplittingPoreEnergyDecayProof01}
F_b\(\n^{k}\)=\int_{\Omega}f_b(\n^k)d\x,~~~F_\grad\(\n^{k}\)=\int_{\Omega}f_\grad(\n^k)d\x.
\end{eqnarray}
The convexity and concavity  yields 
\begin{eqnarray}\label{eqCHEConvexSplittingPoreEnergyDecayProof03}
F_b\(\n^{k+1}\)-F_b\(\n^{k}\)\leq\sum_{i=1}^M\(n_i^{k+1}-n_i^k,\mu_i^{b,\textnormal{convex}}(\n^{k+1}) +\mu_i^{b,\textnormal{concave}}(\n^k)\).
\end{eqnarray}
Using the symmetry of $c_{ij}$, we can estimate the gradient contribution of Helmholtz  free energy  as
\begin{eqnarray}\label{eqDiscreteMultiCompononentMassConserveProof02}
-\sum_{i,j=1}^M\(n_i^{k+1}-n_i^{k},\div\(c_{ij}\grad{n_j^{k+1}}\)\)&=&\sum_{i,j=1}^M\(c_{ij}\grad\(n_i^{k+1}-n_i^{k}\),\grad{n_j^{k+1}}\)\nonumber\\
&\geq&F_\grad\(\n^{k+1}\)-F_\grad\(\n^{k}\).
\end{eqnarray}
Since $F^{k}=F_b\(\n^{k}\)+F_\grad\(\n^{k}\)$, we obtain from \eqref{eqDiscreteMultiCompononentMassConserveProof01}, \eqref{eqCHEConvexSplittingPoreEnergyDecayProof03} and \eqref{eqDiscreteMultiCompononentMassConserveProof02}  that
\begin{eqnarray}\label{eqDiscreteMultiCompononentMassConserveProof03a}
F^{k+1}-F^{k}\leq-\delta t_{k}\sum_{i=1}^M\(\(\div(n_i^{k+1}\u^{k+1}),\mu_i^{k+1}\)-\(\J_i^{k+1},\grad\mu_i^{k+1}\)\).
\end{eqnarray}
We denote $\bm\mu^{k+1}=[\mu_1^{k+1},\cdots,\mu_M^{k+1}]^T$, and  define the  norm of $\bm\mu^{k+1}$ as $$\big\|\grad\bm\mu^{k+1}\big\|_{M^k}^2=\sum_{i,j=1}^{M}\big(\mathcal{M}^{k}_{ij}\nabla \mu_{i}^{k+1},\grad\mu_j^{k+1}\big).$$
Substituting \eqref{eqDiscreteMultiCompononentMassConserve02} into \eqref{eqDiscreteMultiCompononentMassConserveProof03a} yields 
\begin{eqnarray}\label{eqDiscreteMultiCompononentMassConserveProof03}
F^{k+1}-F^{k}\leq-\delta t_{k}\sum_{i=1}^M\(\div(n_i^{k+1}\u^{k+1}),\mu_i^{k+1}\)-\delta t_{k}\big\|\grad\bm\mu^{k+1}\big\|_{M^k}^2.
\end{eqnarray}
Multiplying both sides of \eqref{eqDiscreteMultiComponentMomentumConserveC01} by $\u^{k+1}$ and  and integrating it over $\Omega$,   we obtain
\begin{eqnarray}\label{eqDiscreteMultiComponentMomentumConserveProof01}
&& \(\rho^{k}\frac{\u^{k+1}-\u^{k}}{\delta t^k},\u^{k+1}\)+\(\rho^{k+1}\u^{k+1}\cdot\grad\u^{k+1},\u^{k+1}\)\nonumber\\
&&~~+\sum_{i=1}^MM_{w,i}\(\J_i^{k+1}\cdot\grad\u^{k+1},\u^{k+1}\)=-\sum_{i=1}^M\(n_i^{k+1}\grad \mu_i^{k+1},\u^{k+1}\)\nonumber\\
   &&~~- \left\|\(\xi-\frac{2}{3}\eta\)^{1/2}\div\u^{k+1}\right\|^2 -\frac{1}{2} \left\|\eta^{1/2}\(\nabla\u^{k+1}+\(\nabla\u^{k+1}\)^T\)\right\|^2.
\end{eqnarray}
Multiplying \eqref{eqDiscreteMultiCompononentMassConserve01} by $M_{w,i}$ and summing it from $i=1$ to $M$,  we obtain the mass balance equation
\begin{eqnarray}\label{eqDiscreteMultiCompononentMassConserveProof04}
\frac{ \rho^{k+1}-\rho^{k}}{\delta t_{k}}+\div(\rho^{k+1}\u^{k+1})+\sum_{i=1}^MM_{w,i}\div\J_i^{k+1}=0.
\end{eqnarray}
Using \eqref{eqDiscreteMultiCompononentMassConserveProof04}, we estimate
 \begin{eqnarray}\label{eqDiscreteMultiComponentMomentumConserveProof02}
\(\rho^{k}\(\u^{k+1}-\u^{k}\),\u^{k+1}\)&=&\frac{1}{2}\(\rho^{k}\(|\u^{k+1}|^2-|\u^{k}|^2+|\u^{k+1}-\u^{k}|^2\),1\)\nonumber\\
   &\geq&E^{k+1}-E^{k}-\frac{1}{2}\(\rho^{k+1}-\rho^{k},|\u^{k+1}|^2\)\nonumber\\
   &=&E^{k+1}-E^{k}+\frac{\delta t_{k}}{2}\(\div(\rho^{k+1}\u^{k+1}),|\u^{k+1}|^2\)\nonumber\\
   &&+\frac{\delta t_{k}}{2}\sum_{i=1}^MM_{w,i}\(\div\J_i^{k+1},|\u^{k+1}|^2\)\nonumber\\
   &=&E^{k+1}-E^{k}-\delta t_{k}\(\u^{k+1}\cdot\grad\u^{k+1},\rho^{k+1}\u^{k+1}\)\nonumber\\
   &&- \delta t_{k} \sum_{i=1}^MM_{w,i}\( \J_i^{k+1}\cdot\grad\u^{k+1},\u^{k+1}\).
\end{eqnarray}
Substituting \eqref{eqDiscreteMultiComponentMomentumConserveProof02} into \eqref{eqDiscreteMultiComponentMomentumConserveProof01} yields
\begin{eqnarray}\label{eqDiscreteMultiComponentMomentumConserveProof03}
E^{k+1}-E^{k}&\leq&-\delta t_{k}\sum_{i=1}^M\(n_i^{k+1}\grad \mu_i^{k+1},\u^{k+1}\) -\delta t_{k}\left\|\(\xi-\frac{2}{3}\eta\)^{1/2}\div\u^{k+1}\right\|^2\nonumber\\
   &&-\frac{1}{2}\delta t_{k}\left\|\eta^{1/2}\(\nabla\u^{k+1}+\(\nabla\u^{k+1}\)^T\)\right\|^2.
\end{eqnarray}
 Noticing that $$\(n_i^{k+1}\grad \mu_i^{k+1},\u^{k+1}\)+\(\div(n_i^{k+1}\u^{k+1}),\mu_i^{k+1}\)=0,$$
 we combine   \eqref{eqDiscreteMultiCompononentMassConserveProof03} and \eqref{eqDiscreteMultiComponentMomentumConserveProof03} and then obtain
 \begin{eqnarray}\label{eqDiscreteMultiCompononentEnergyDecay}
E^{k+1}-E^{k}+F^{k+1}-F^{k}&\leq&-\delta t_{k}\big\|\grad\bm\mu^{k+1}\big\|_{M^k}^2-\delta t_{k}\left\|\(\xi-\frac{2}{3}\eta\)^{1/2}\div\u^{k+1}\right\|^2\nonumber\\
   &&-\frac{1}{2}\delta t_{k}\left\|\eta^{1/2}\(\nabla\u^{k+1}+\(\nabla\u^{k+1}\)^T\)\right\|^2,
   \end{eqnarray}
which yields  the energy dissipation \eqref{eqDiscreteTotalEnergyDecay}. 
\end{proof}

The above semi-implicit time   schemes lead to a nonlinear  systems of equations, and consequently, a linearized  iterative method is required. The Newton's method is  commonly used to solve such systems. Here, we present a mixed  iterative method, in which we apply Newton's linearization for $\mu_i^{b,\textnormal{convex}}$ and treat the  rest terms through a simple linearization approach. We use the superscript $l$ to represent the iteration step; that is,  $\n^{k+1,l} $ and $\u^{k+1,l}$ denote the approximations of $\n^{k+1} $ and $\u^{k+1}$ at the $l$th iteration.  The initial guess solutions are taken from the previous time step, i.e. $\n^{k+1,0} =\n^{k}$ and $\u^{k+1,0}=\u^{k}$.  Once the approximations $\n^{k+1,l} $ and $\u^{k+1,l}$ are obtained, the new approximations $\n^{k+1,l+1} $ and $\u^{k+1,l+1}$ are calculated as
\begin{eqnarray}\label{eqIterativeDiscreteMultiCompononentMassConserve01}
\frac{ n_i^{k+1,l+1}-n_i^{k}}{\delta t_{k}}+\div(n_i^{k,l+1}\u^{k+1,l})+\div\J_i^{k+1,l+1}=0,
\end{eqnarray}
\begin{eqnarray}\label{eqIterativeDiscreteMultiCompononentMassConserve02}
\J_i^{k+1,l+1} = -\sum_{j=1}^M\mathcal{M}^k_{ij}\nabla \mu_{j}^{k+1,l+1},
\end{eqnarray}
 \begin{eqnarray}\label{eqIterativeDiscreteMultiCompononentMassConserve03}
\mu_i^{k+1,l+1}&=&\mu_i^{b,\textnormal{convex}}\(\n^{k+1,l}\) + \mathbf{ H}_b^{\textnormal{convex}}\(\n^{k+1,l}\)\(\n^{k+1,l+1}-\n^{k+1,l}\)\nonumber\\
   &&~~+\mu_i^{b,\textnormal{concave}}\(\n^{k}\)-\sum_{j=1}^M\div\(c_{ij}\grad{n_j^{k+1,l+1}}\),
  \end{eqnarray}
  \begin{eqnarray}\label{eqIterativeDiscreteMultiComponentMomentumConserveC01}
&& \rho^{k}\frac{\u^{k+1,l+1}-\u^{k}}{\delta t_{k}}+\rho^{k+1,l}\(\u^{k+1,l}\cdot\grad\)\u^{k+1,l+1}\nonumber\\
   &&~~+\sum_{i=1}^MM_{w,i}\J_i^{k+1,l+1}\cdot\grad\u^{k+1,l}=-\sum_{i=1}^Mn_i^{k+1,l}\grad \mu_i^{k+1,l+1}\nonumber\\
   &&~~+\div\(\eta\(\nabla\u^{k+1,l+1}+\(\nabla\u^{k+1,l+1}\)^T\)+\(\xi-\frac{2}{3}\eta\)\(\div\u^{k+1,l+1}\) \I\) ,
\end{eqnarray}
where $\mathbf{ H}_b^{\textnormal{convex}}$ stands for the Hessian matrix of   $f_b^{\textnormal{convex}}$. The convergence of this iterative method will be studied  in the future work.

\section{Numerical tests}

In this section, we carry out  numerical tests regarding multi-component two-phase  fluid flow problems and verify the effectiveness of the proposed method. We consider the fluid flow of a binary hydrocarbon  mixture, which is composed of methane (CH$_4$) and decane (nC$_{10}$). We take   the volumetric  viscosity and    the shear viscosity as $\xi=\eta=0.01$Pa$\cdot$s, and we take  the diffusion coefficients in \eqref{eqMultiCompononentMassConserveDiffusionB} as $D_1=D_2=10^{-6}$m$^2$/s.  In all numerical examples, the spatial  domain is a square  domain with the length $20$nm, and a  uniform  rectangular mesh with $40\times40$ elements  is applied on the   domain. We employ the cell-centered finite difference method and the upwind scheme  to discretize  the mass balance equation and  the finite volume method  on the staggered mesh \cite{Tryggvason2011book} for the momentum balance equation, and these spatial discretization schemes  can be equivalent to special mixed finite element methods with quadrature rules \cite{arbogast1997mixed,Girault1996Mac}. The stop criterion of the nonlinear solver is that the 2-norm of  the relative variation of molar density and velocity between the current and previous  iterations  is less than $10^{-3}$, and    in many cases, about 2-3 nonlinear iterations are required to reach this criterion. The maximum nonlinear iterations are also set to be not larger than 5 for preventing too many  loops.

\subsection{Example 1:  a square shape droplet}
In this example, we simulate the dynamical evolution of a square shape fluid droplet, which is initially located in   the center of the domain. The  temperature of the fluid keeps 320K constant. The initial densities of liquid and gas fluids are computed by  PR-EOS under the pressure 160bar and temperature 320K.  The initial molar densities of methane and decane in gas phase  are $7.1339\times10^3$mol/m$^3$ and $0.0265\times10^3$mol/m$^3$ respectively, while the initial molar densities of methane and decane in  liquid phase are $3.5132\times10^3$mol/m$^3$ and $ 3.8146\times10^3$mol/m$^3$  respectively. The time step size is taken as $10^{-6}$s, and 45  time steps are simulated.

Figure \ref{SquareCH4andnC10TotalEnergyTemperature320}(a) clearly depicts the strict dissipation of the total (free) energy with time steps.   Figure \ref{SquareCH4andnC10TotalEnergyTemperature320}(b) is a zoom-in plot of Figure \ref{SquareCH4andnC10TotalEnergyTemperature320}(a)  in the later time steps, and it shows  that the total (free) energy remains to decrease all the time.   These results  verify the effectiveness of the proposed method.

The initial molar density distributions for  methane and decane are illustrated in  Figure \ref{SquareCH4andnC10MolarDensityOfCH4Temperature320K}(a) and Figure \ref{SquareCH4andnC10MolarDensityOfnC10Temperature320K}(a) respectively.  In Figure \ref{SquareCH4andnC10MolarDensityOfCH4Temperature320K}(b)(c) and Figure \ref{SquareCH4andnC10MolarDensityOfnC10Temperature320K}(b)(c), we show the changes of each component molar density  in the dynamical evolution. It is obviously observed that the droplet turns to a circle from its initial square shape under the effect of driving force  (i.e. the gradient of chemical potential of this species).  In Figures \ref{SquareCH4andnC10VelocityTemperature320K}, we illustrate    the velocity field and  magnitudes of both velocity components at the last time step.

\begin{figure}
            \centering \subfigure[]{
            \begin{minipage}[b]{0.45\textwidth}
               \centering
             \includegraphics[width=0.95\textwidth,height=2in]{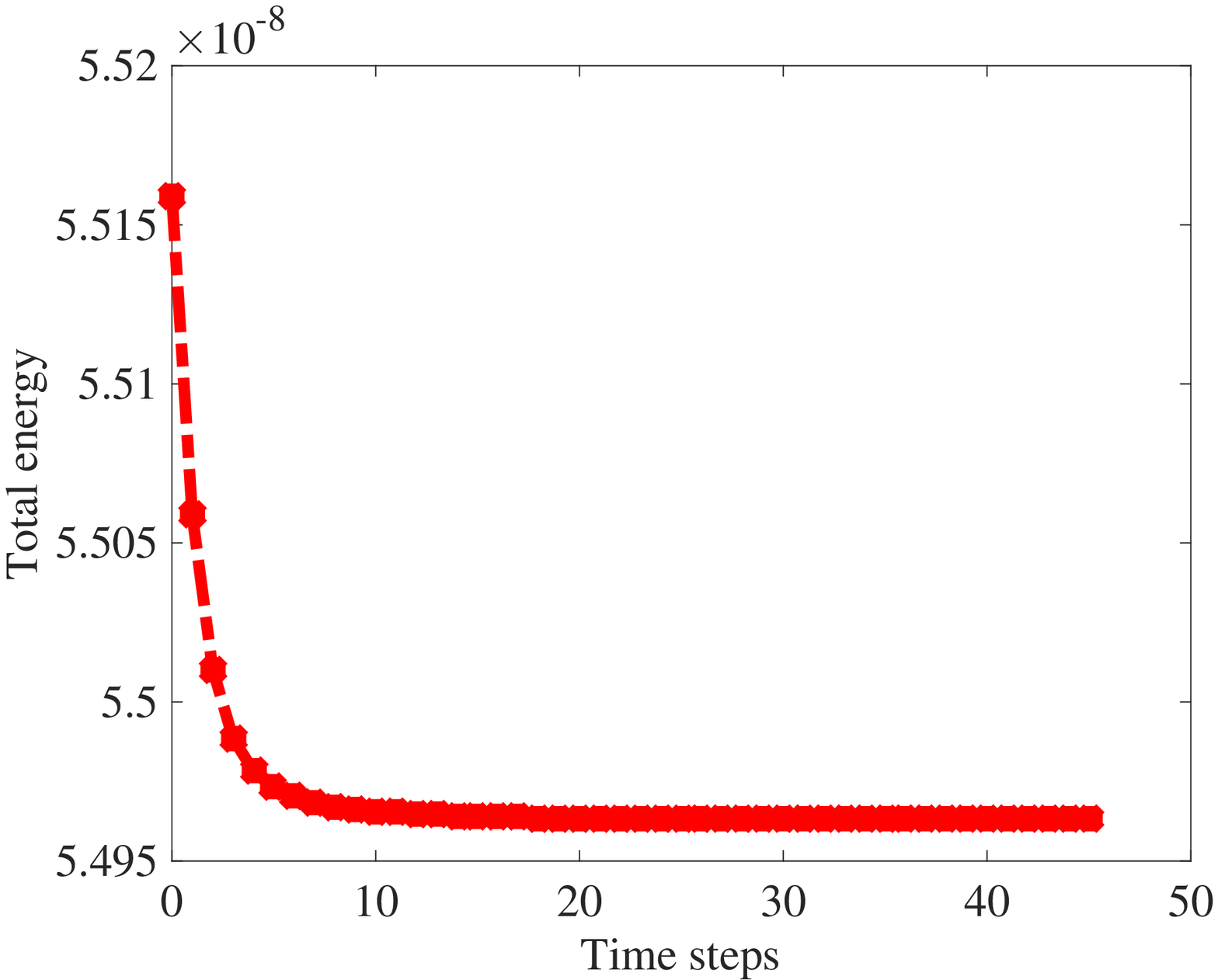}
            \end{minipage}
            }
            \centering \subfigure[]{
            \begin{minipage}[b]{0.45\textwidth}
            \centering
             \includegraphics[width=0.95\textwidth,height=2in]{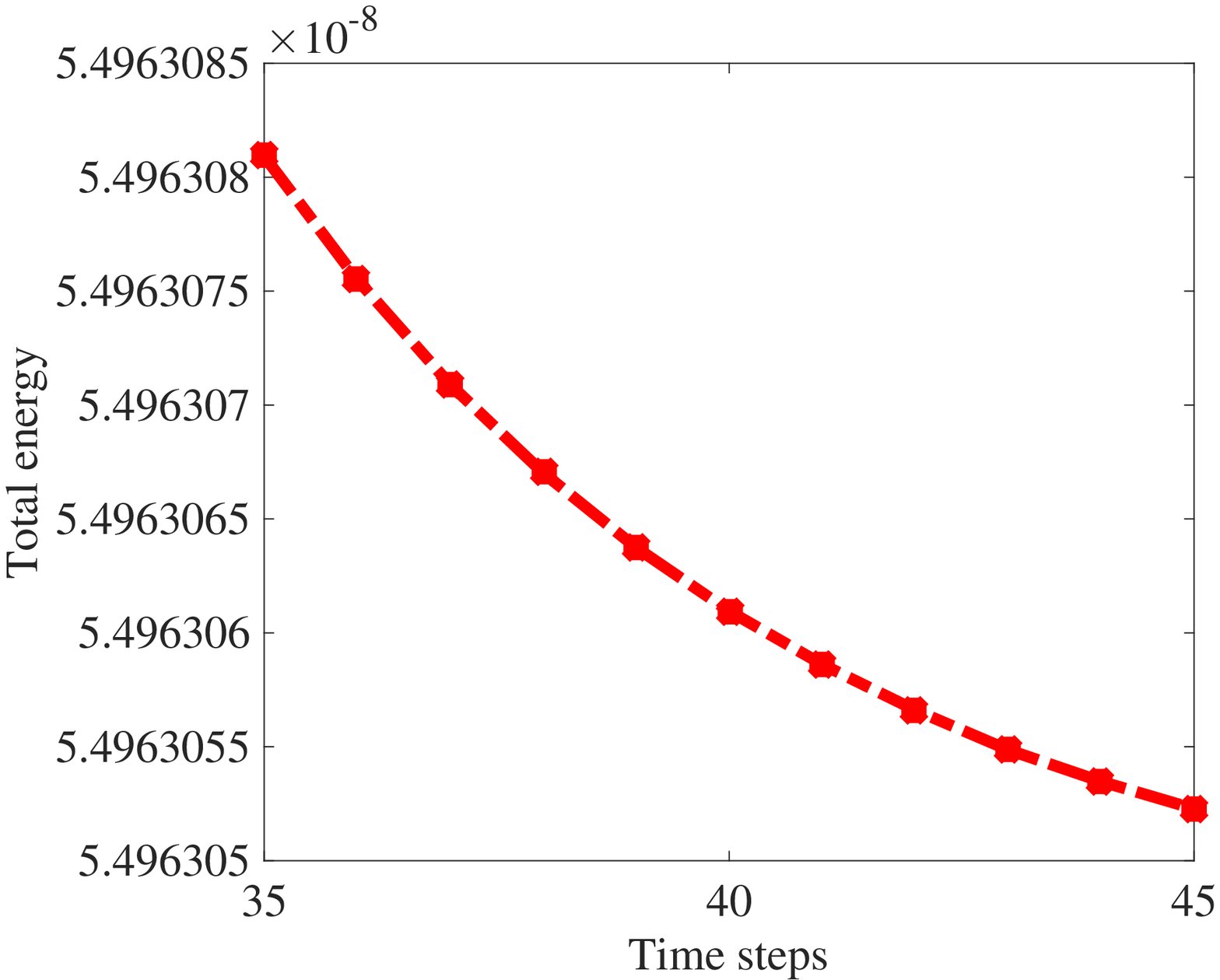}
            \end{minipage}
            }
           \caption{Example 1:  energy dissipation with time steps.}
            \label{SquareCH4andnC10TotalEnergyTemperature320}
 \end{figure}

\begin{figure}
            \centering \subfigure[]{
            \begin{minipage}[b]{0.31\textwidth}
               \centering
             \includegraphics[width=0.95\textwidth,height=0.9\textwidth]{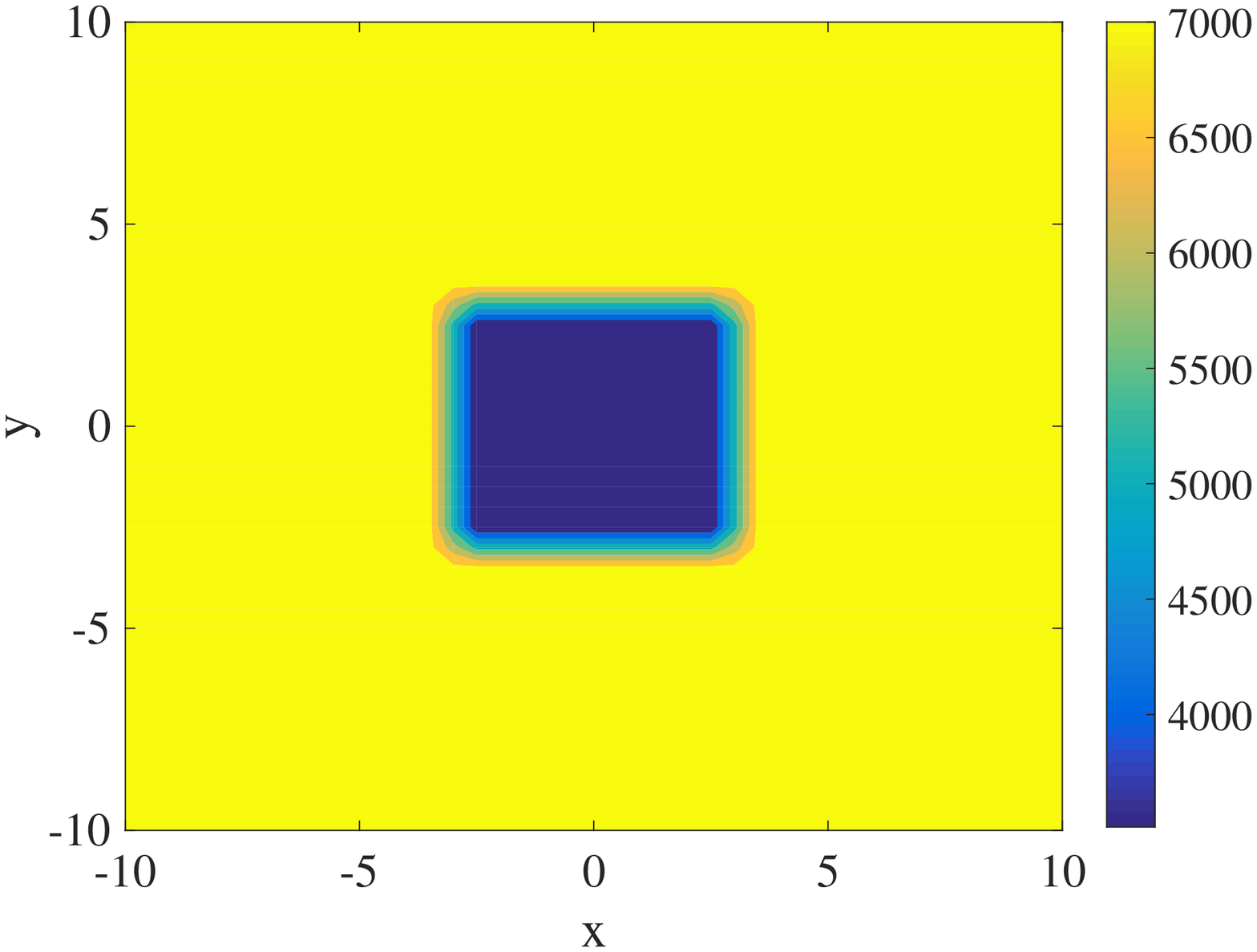}
            \end{minipage}
            }
            \centering \subfigure[]{
            \begin{minipage}[b]{0.31\textwidth}
            \centering
             \includegraphics[width=0.95\textwidth,height=0.9\textwidth]{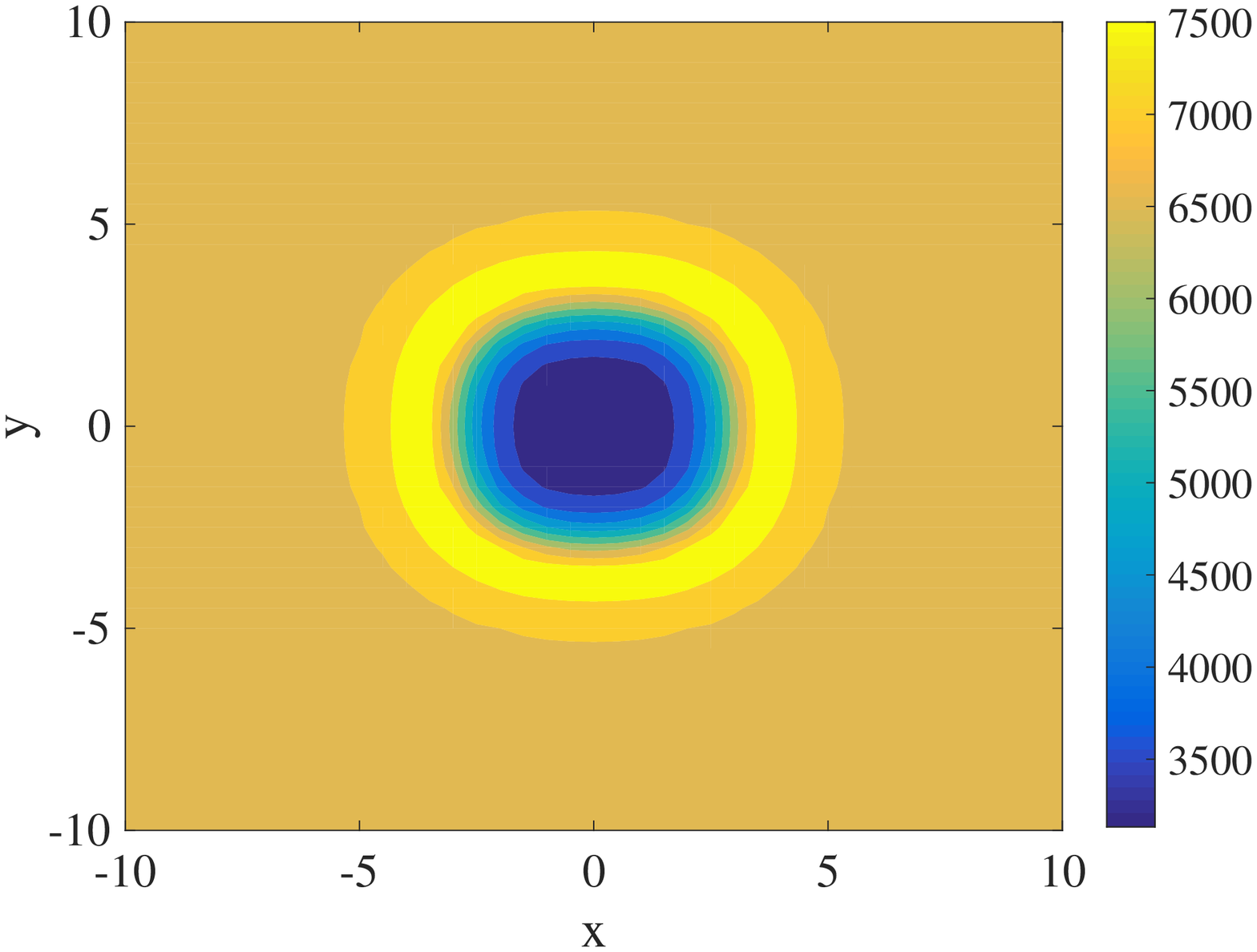}
            \end{minipage}
            }
           \centering \subfigure[]{
            \begin{minipage}[b]{0.31\textwidth}
               \centering
             \includegraphics[width=0.95\textwidth,height=0.9\textwidth]{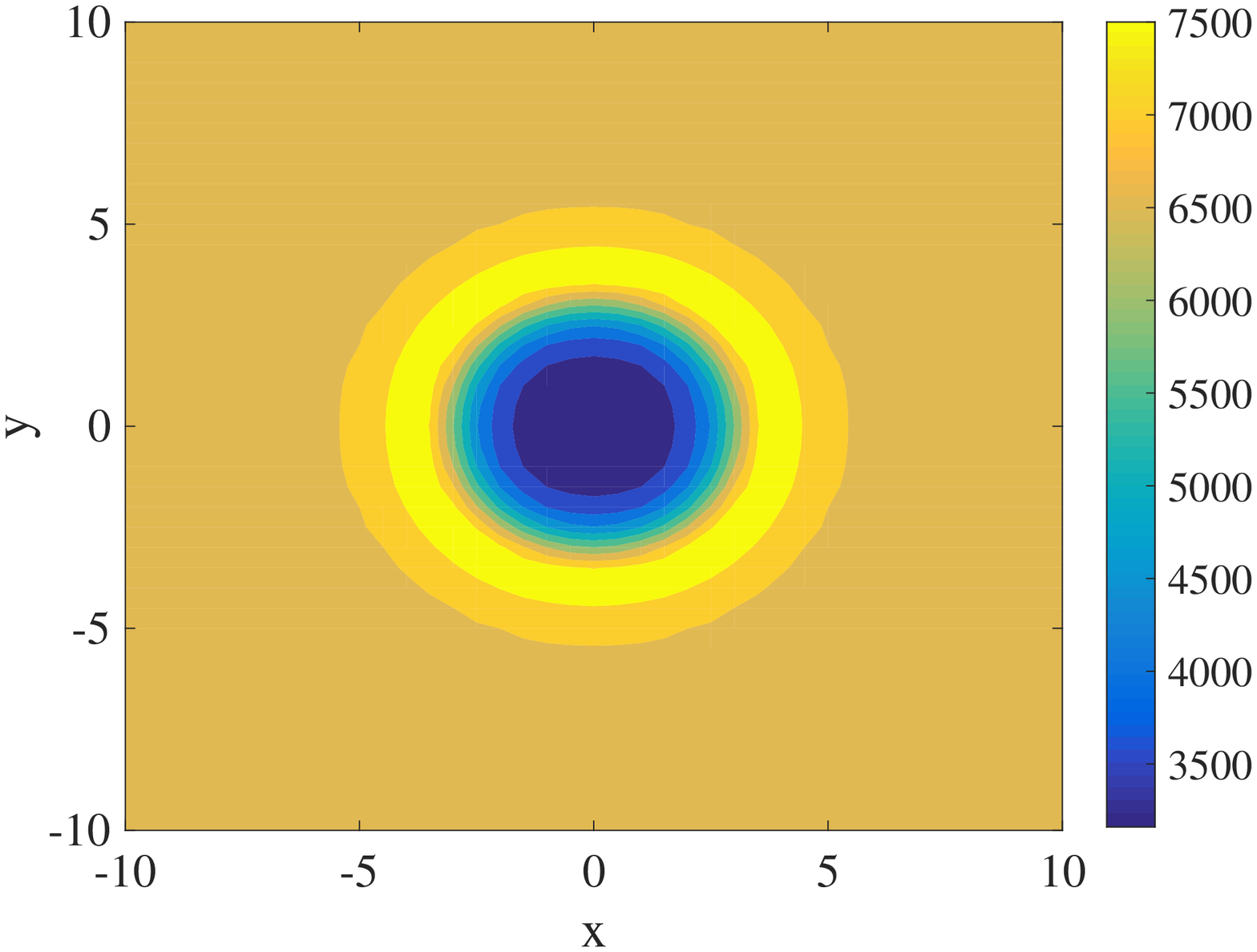}
            \end{minipage}
            }
            \caption{Example 1:   CH$_4$ molar densities at  the the initial(a),  20th(b),   and 45th(c)    time step  respectively.}
            \label{SquareCH4andnC10MolarDensityOfCH4Temperature320K}
 \end{figure}

\begin{figure}
            \centering \subfigure[]{
            \begin{minipage}[b]{0.31\textwidth}
               \centering
             \includegraphics[width=0.95\textwidth,height=0.9\textwidth]{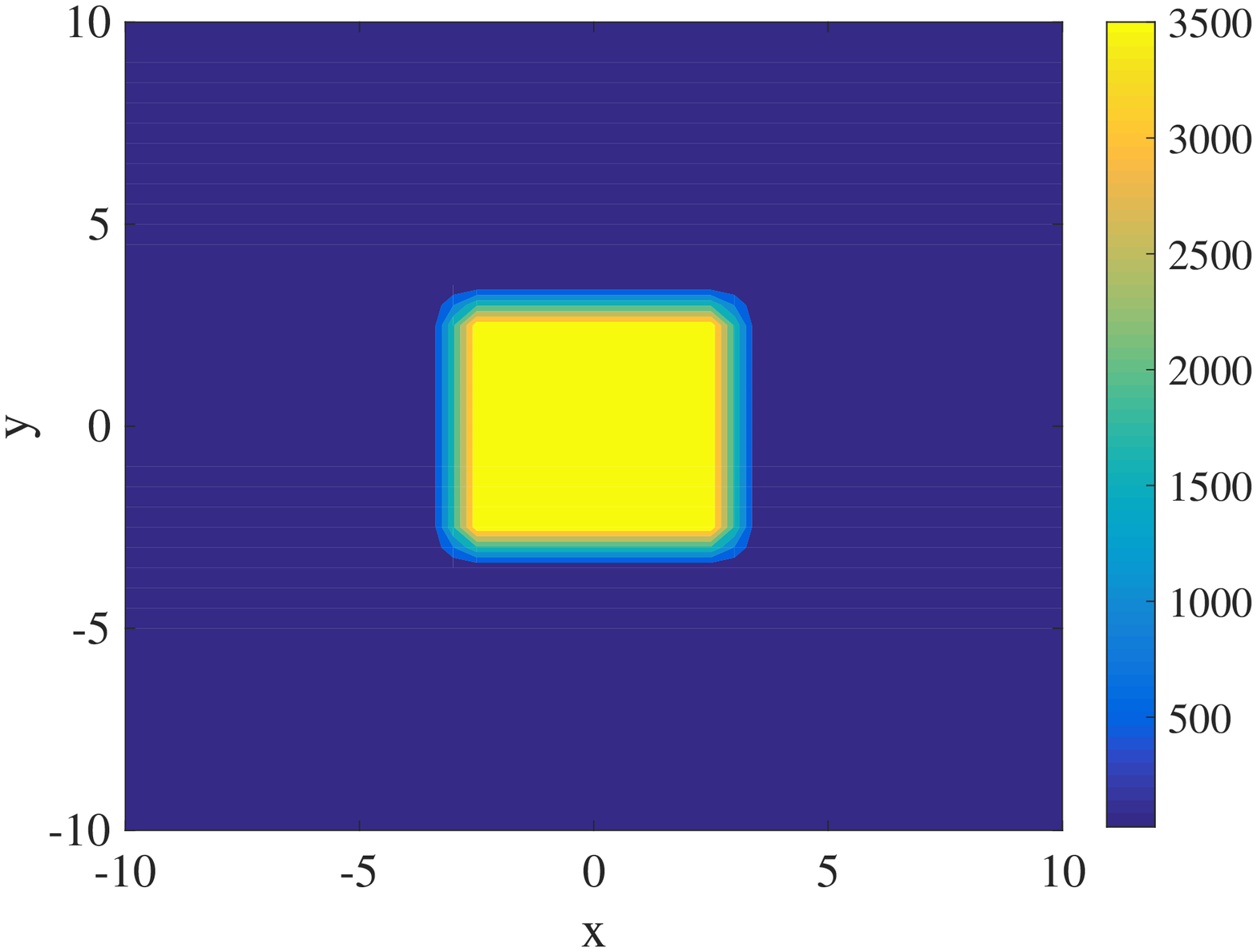}
            \end{minipage}
            }
            \centering \subfigure[]{
            \begin{minipage}[b]{0.31\textwidth}
            \centering
             \includegraphics[width=0.95\textwidth,height=0.9\textwidth]{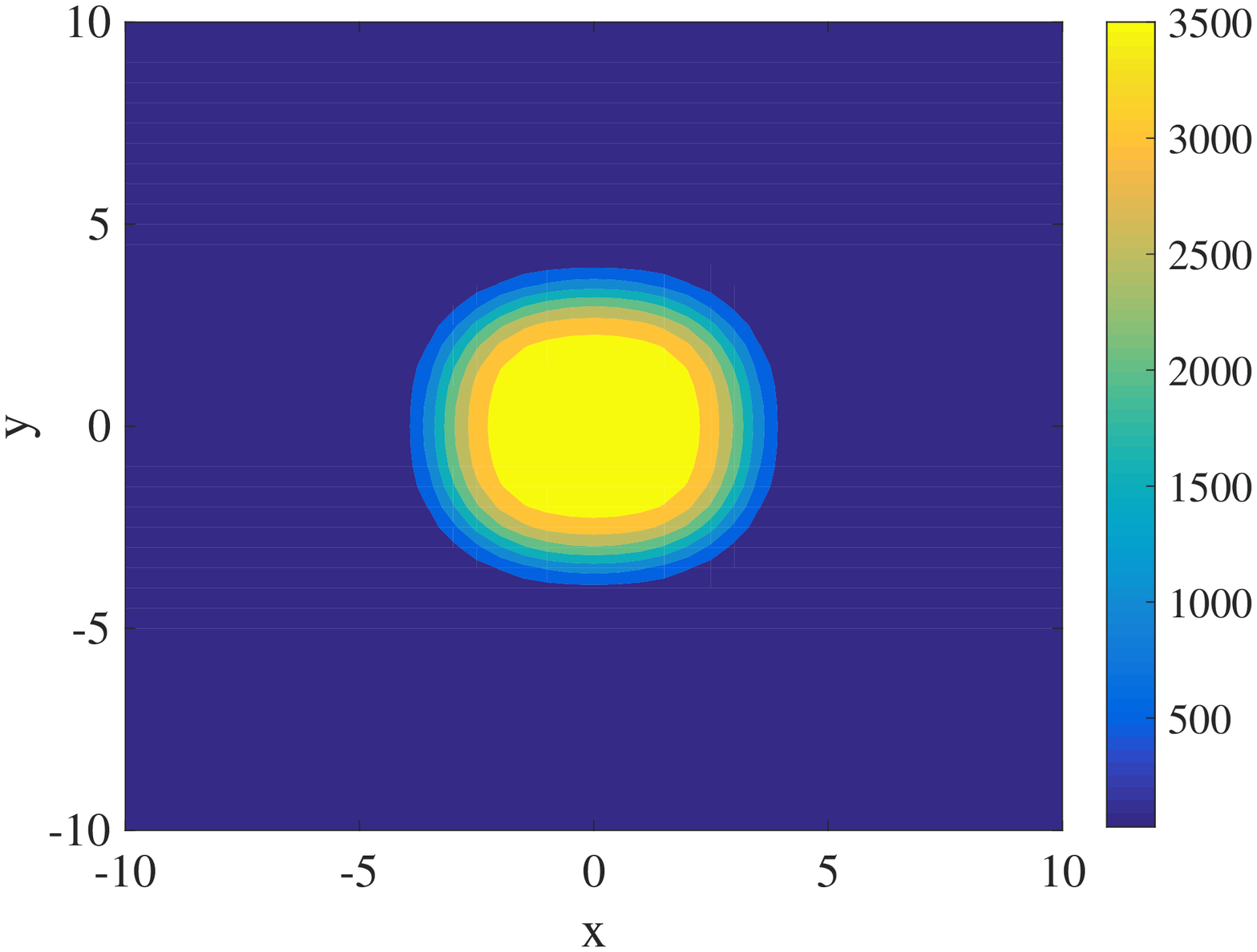}
            \end{minipage}
            }
           \centering \subfigure[]{
            \begin{minipage}[b]{0.31\textwidth}
               \centering
             \includegraphics[width=0.95\textwidth,height=0.9\textwidth]{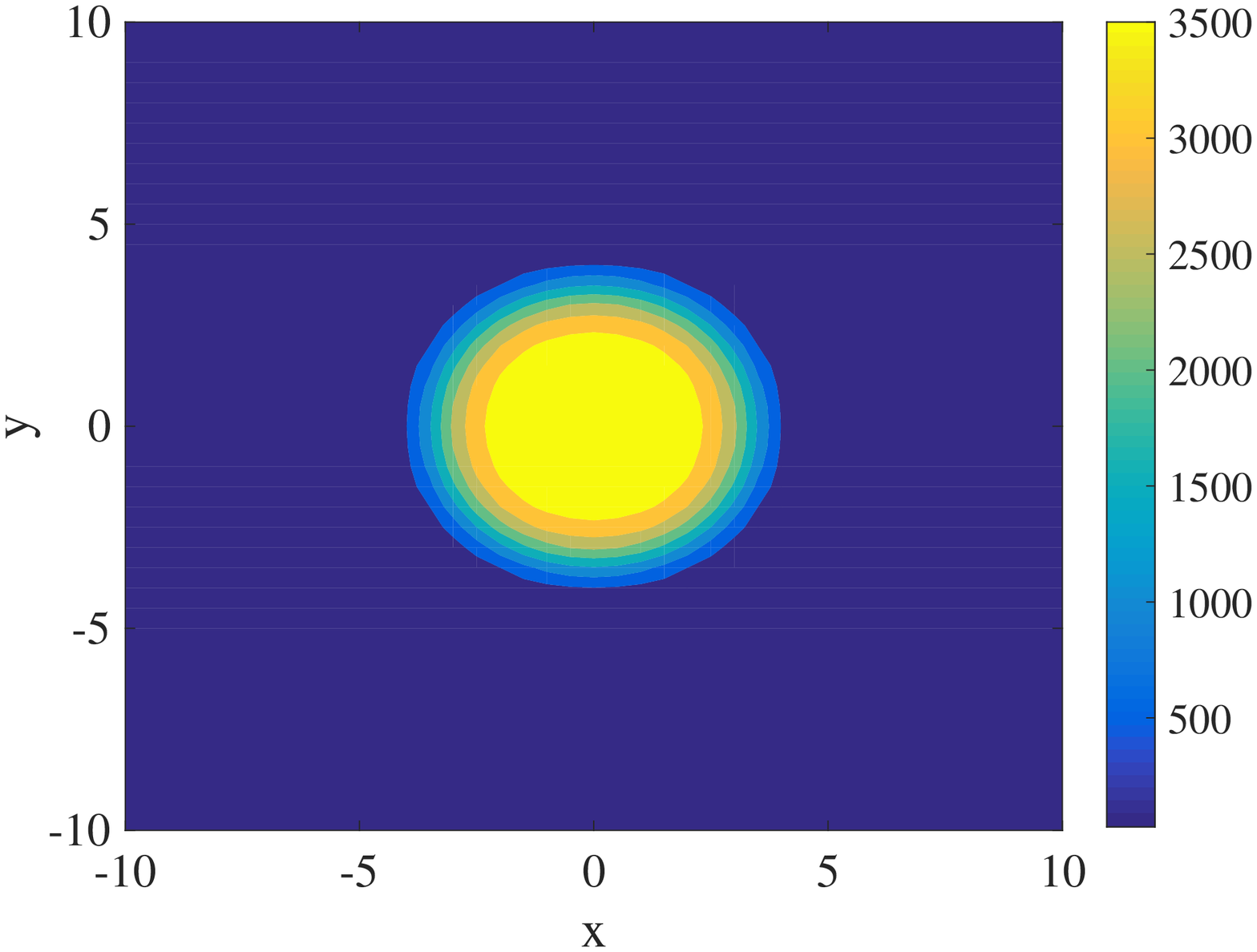}
            \end{minipage}
            }
            \caption{Example 1: nC$_{10}$ molar densities at  the initial(a),   20th(b),   and 45th(c)   time step  respectively.}
            \label{SquareCH4andnC10MolarDensityOfnC10Temperature320K}
 \end{figure}

\begin{figure}
           \centering \subfigure[]{
            \begin{minipage}[b]{0.3\textwidth}
            \centering
             \includegraphics[width=0.95\textwidth,height=0.9\textwidth]{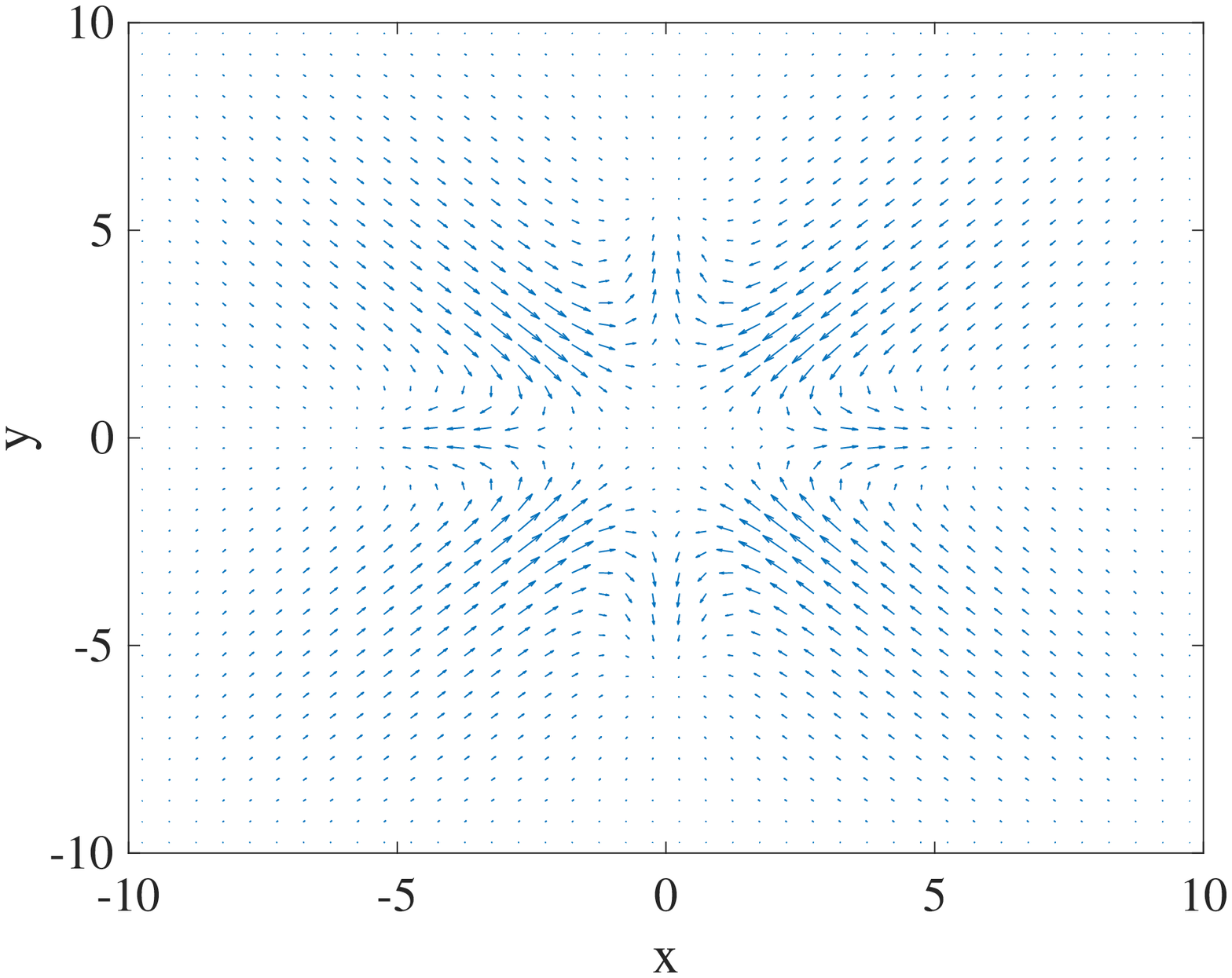}
            \end{minipage}
            }
           \centering \subfigure[]{
            \begin{minipage}[b]{0.3\textwidth}
            \centering
             \includegraphics[width=0.95\textwidth,height=0.9\textwidth]{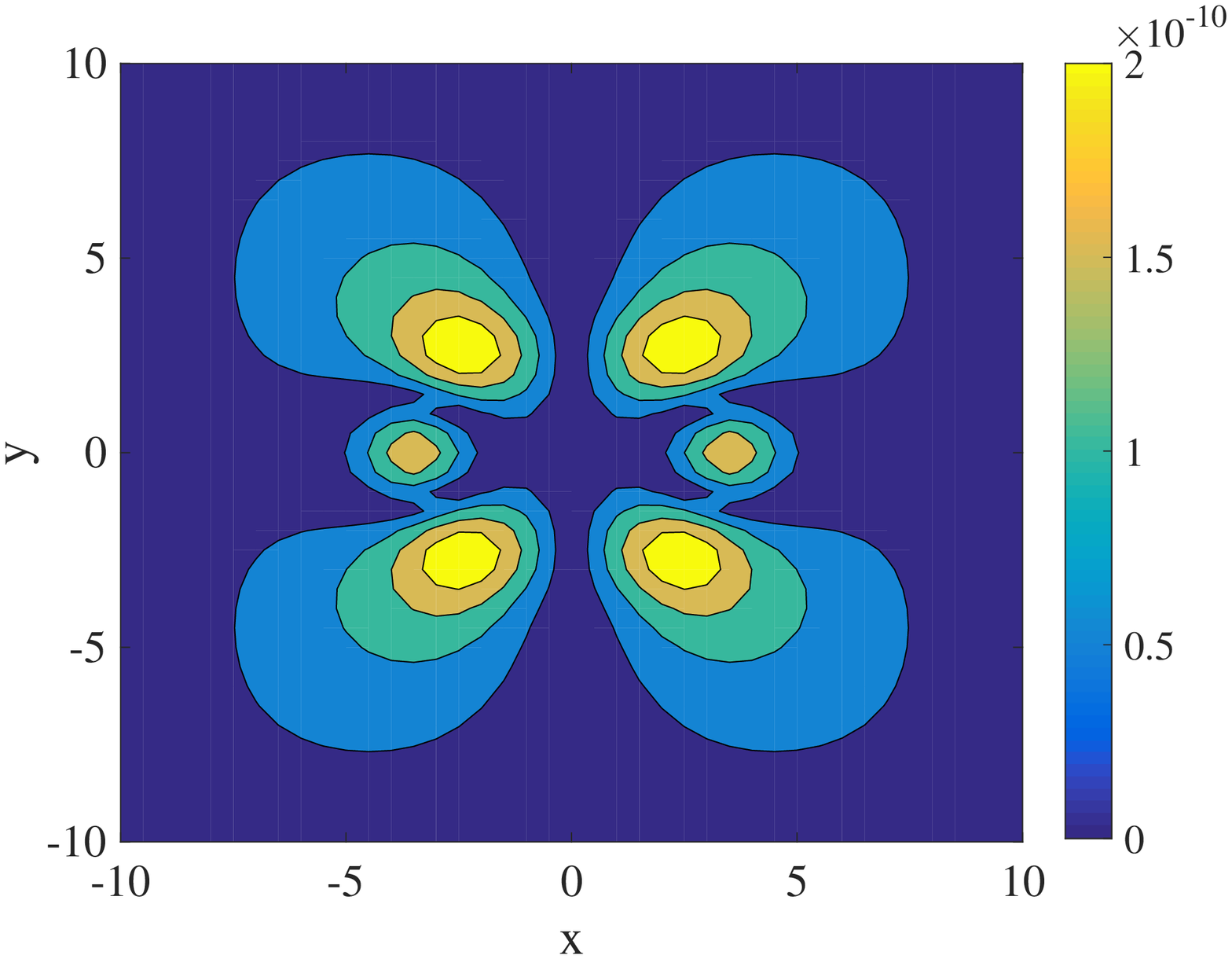}
            \end{minipage}
            }
           \centering \subfigure[]{
            \begin{minipage}[b]{0.3\textwidth}
            \centering
             \includegraphics[width=0.95\textwidth,height=0.9\textwidth]{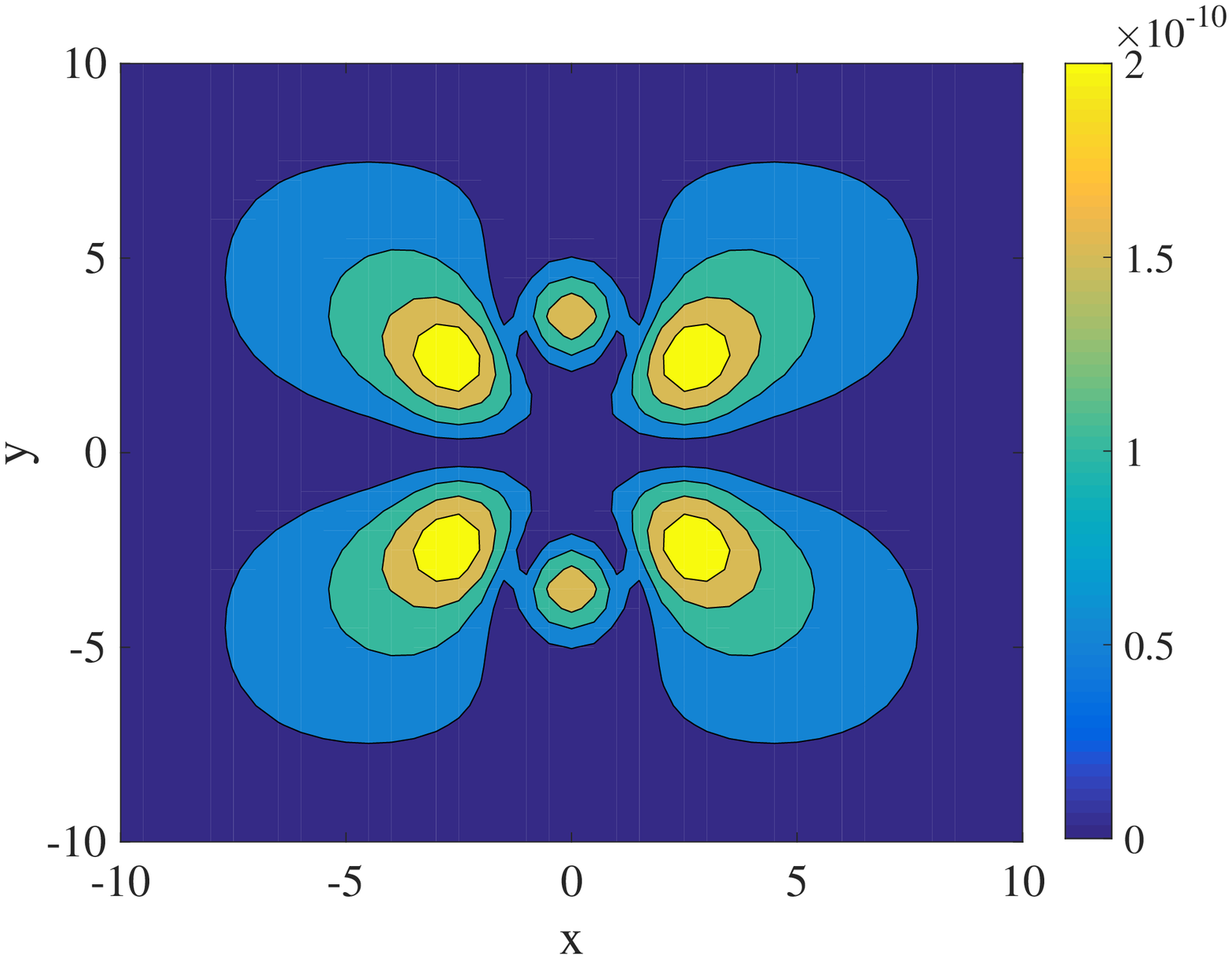}
            \end{minipage}
            }
           \caption{Example 1:   flow quiver(a), magnitude contour of $x$-direction velocity component(b), and magnitude contour of $y$-direction velocity component(c)   at the  45th  time step.}
            \label{SquareCH4andnC10VelocityTemperature320K}
 \end{figure}

\subsection{Example 2:  an ellipse shape bubble}
We simulate  the dynamical evolution of a bubble, which is   ellipse shaped  in   the center of the domain at the initial moment.  The  temperature of the fluid is constant at 330K.   The initial molar densities of methane and decane in gas phase  are $7.6181\times10^3$mol/m$^3$ and $0.0445\times10^3$mol/m$^3$ respectively, while the initial molar densities of methane and decane in  liquid phase are $3.8336\times10^3$mol/m$^3$ and $ 3.6843\times10^3$mol/m$^3$  respectively.    We set the time step size   $10^{-5}$s, and carry out the simulation for 100  time steps.

We illustrate the total (free) energy profiles with time steps in Figure \ref{EllipseCH4andnC10TotalEnergyTemperature330}(a),  and further present a zoom-in plot  of the later time steps in Figure \ref{EllipseCH4andnC10TotalEnergyTemperature330}(b).  It is still observed that the  total (free) energy is dissipated with time steps. 

In  Figure \ref{EllipseCH4andnC10MolarDensityOfCH4Temperature330K} and Figure \ref{EllipseCH4andnC10MolarDensityOfnC10Temperature330K}, we illustrate the initial molar density configurations for  methane and decane and  the molar density changes with time steps for   each component.  Moreover, in Figures \ref{EllipseCH4andnC10VelocityTemperature330K}, we illustrate   the velocity fields and  magnitudes of  velocity components at different time steps. 
We can observe from these results that at the $x$-direction the fluids flow towards to the center from the left and right sides, while at  the $y$-direction, the fluids are flowing from the center to the bottom and top sides. As a result,  the   bubble tends towards  a circle from its original ellipse shape.

\begin{figure}
            \centering \subfigure[]{
            \begin{minipage}[b]{0.45\textwidth}
               \centering
             \includegraphics[width=0.95\textwidth,height=2in]{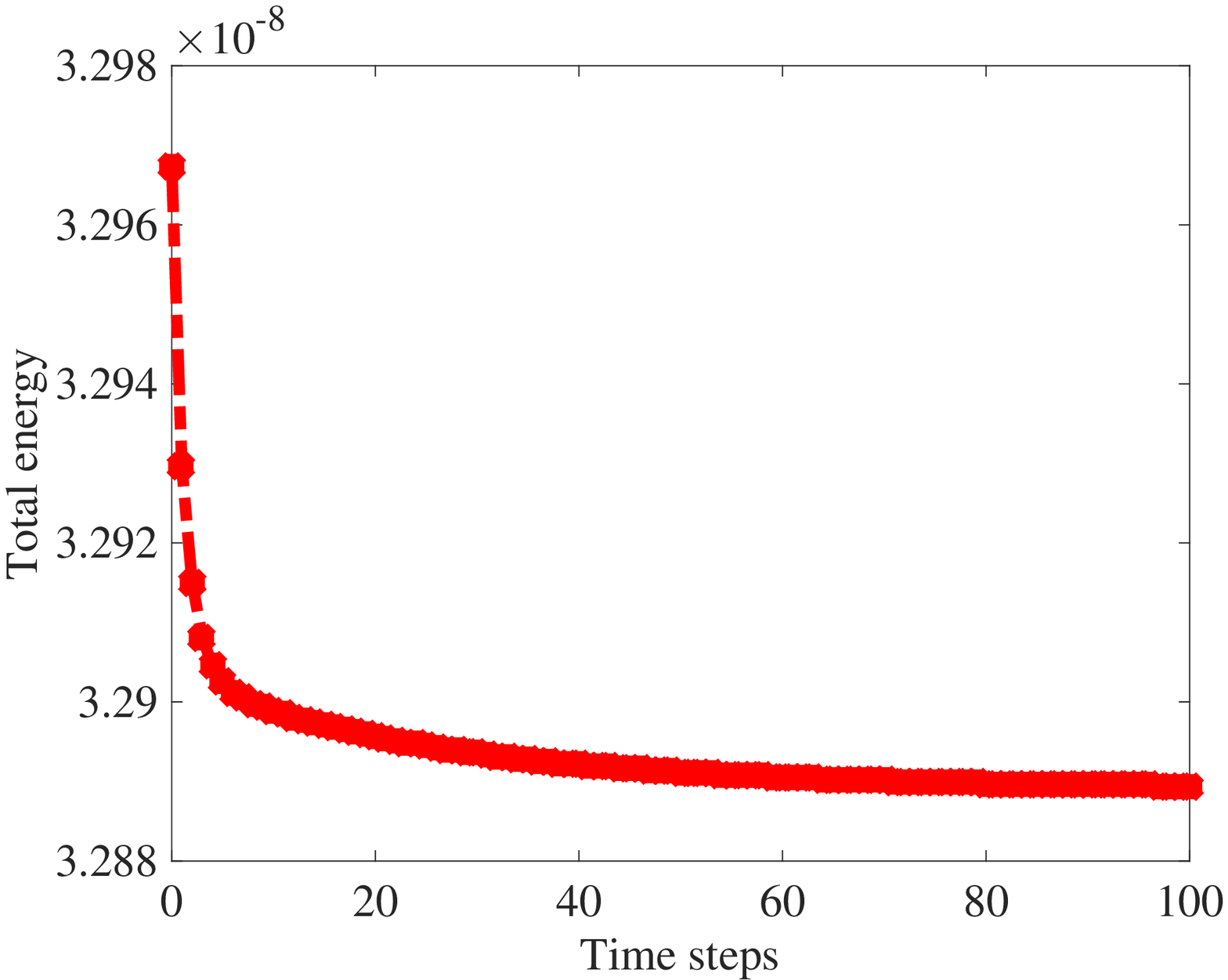}
            \end{minipage}
            }
            \centering \subfigure[]{
            \begin{minipage}[b]{0.45\textwidth}
            \centering
             \includegraphics[width=0.95\textwidth,height=2in]{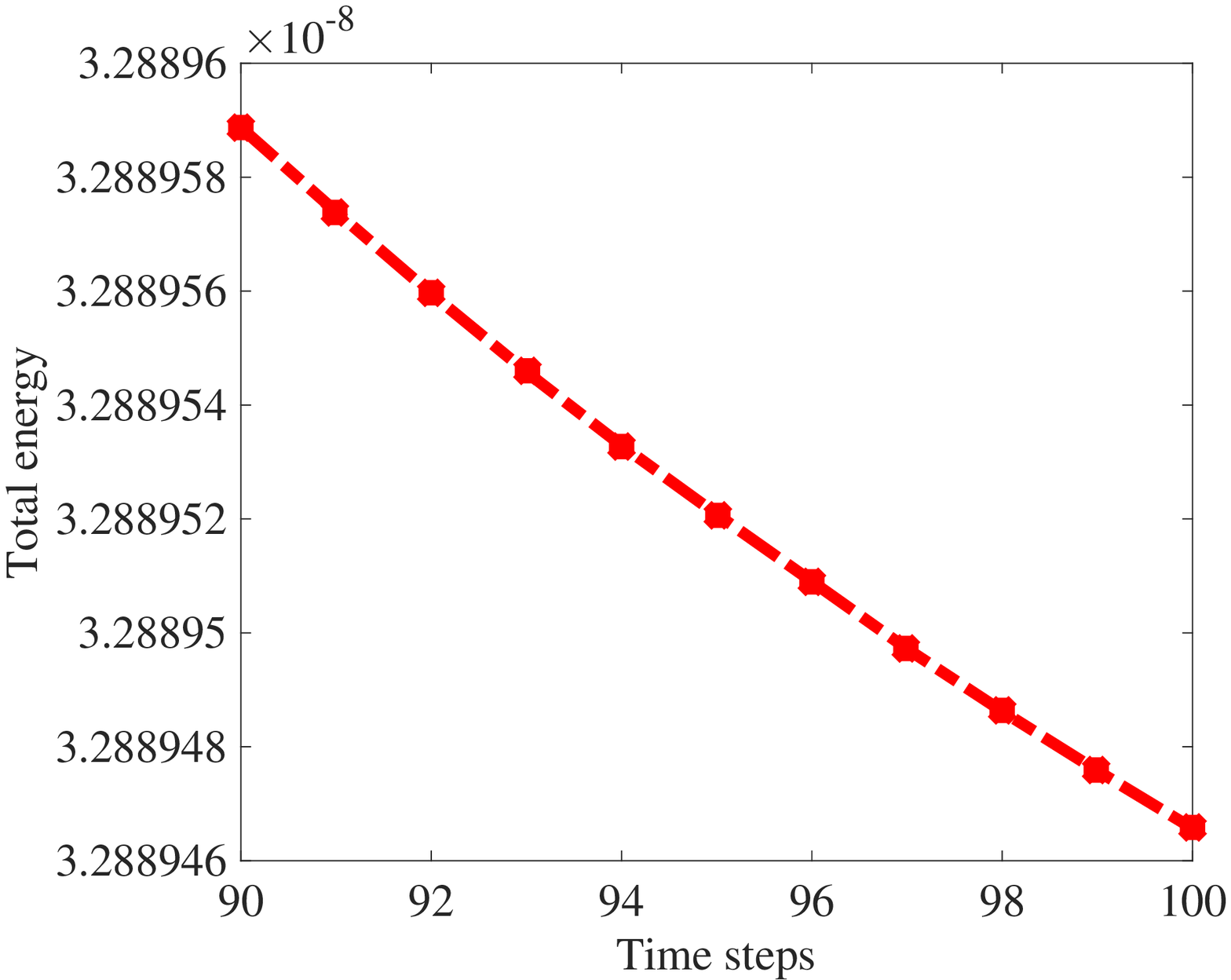}
            \end{minipage}
            }
           \caption{Example 2:  energy dissipation with time steps.}
            \label{EllipseCH4andnC10TotalEnergyTemperature330}
 \end{figure}

\begin{figure}
            \centering \subfigure[]{
            \begin{minipage}[b]{0.3\textwidth}
               \centering
             \includegraphics[width=0.95\textwidth,height=0.9\textwidth]{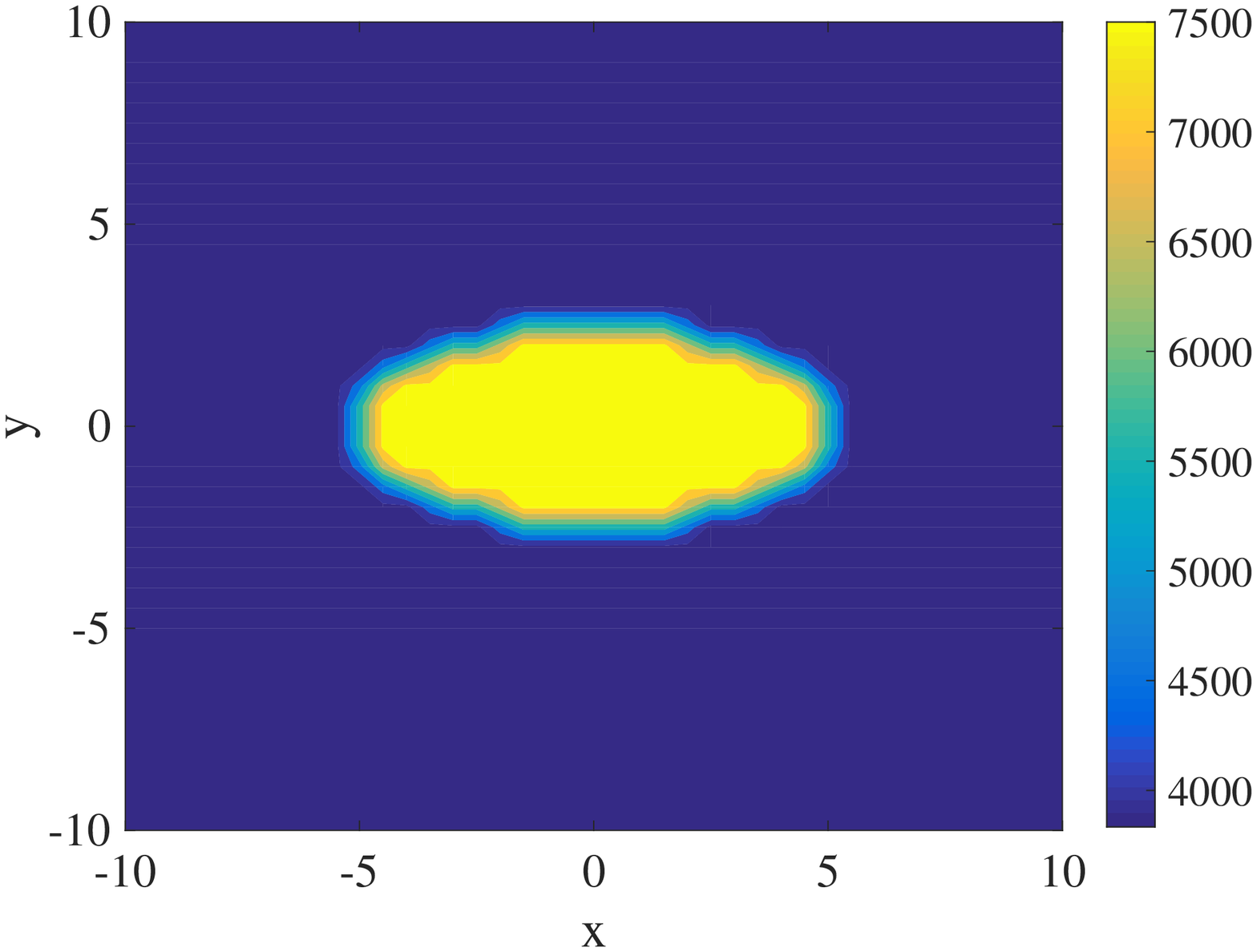}
            \end{minipage}
            }
            \centering \subfigure[]{
            \begin{minipage}[b]{0.3\textwidth}
            \centering
             \includegraphics[width=0.95\textwidth,height=0.9\textwidth]{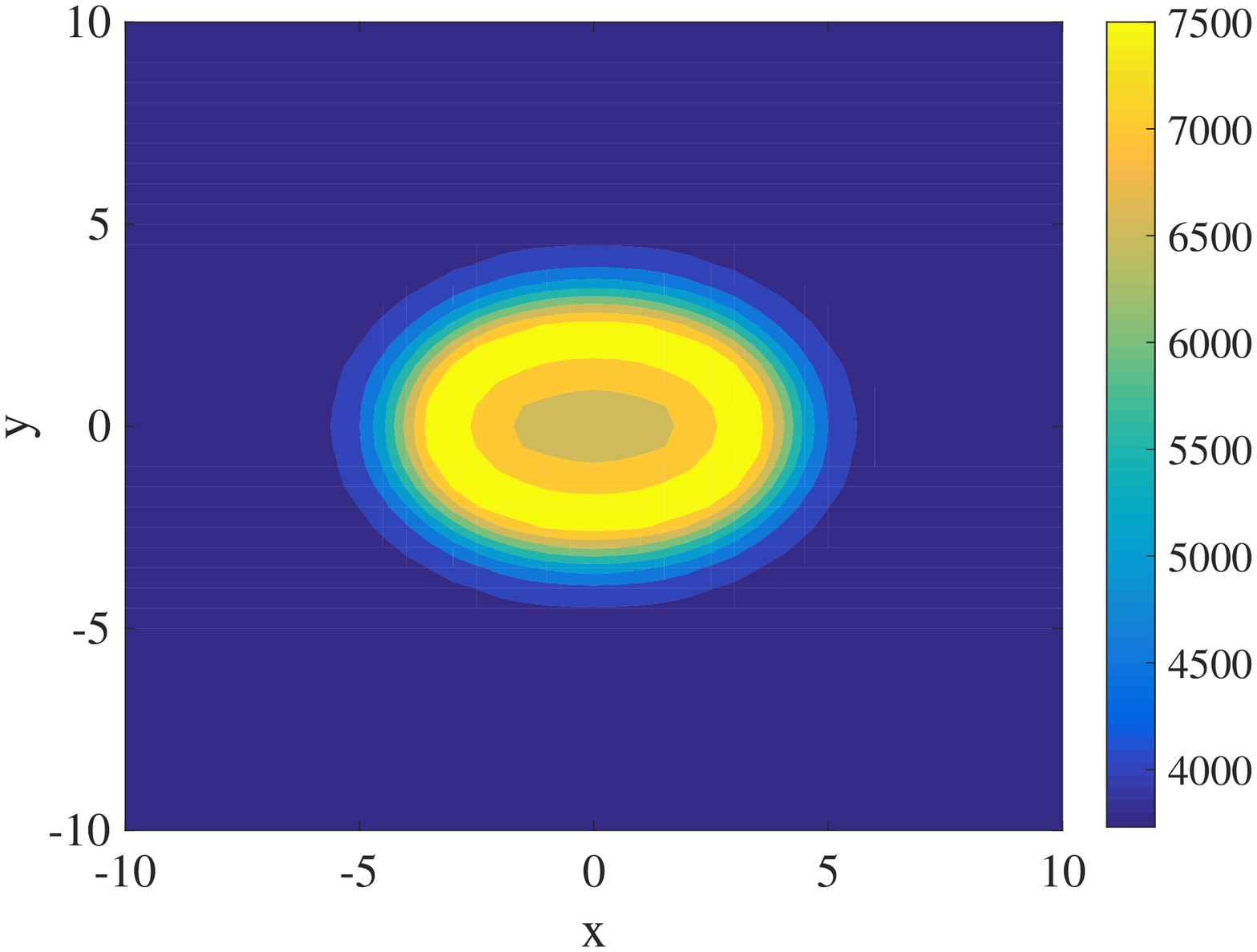}
            \end{minipage}
            }
           \centering \subfigure[]{
            \begin{minipage}[b]{0.3\textwidth}
            \centering
             \includegraphics[width=0.95\textwidth,height=0.9\textwidth]{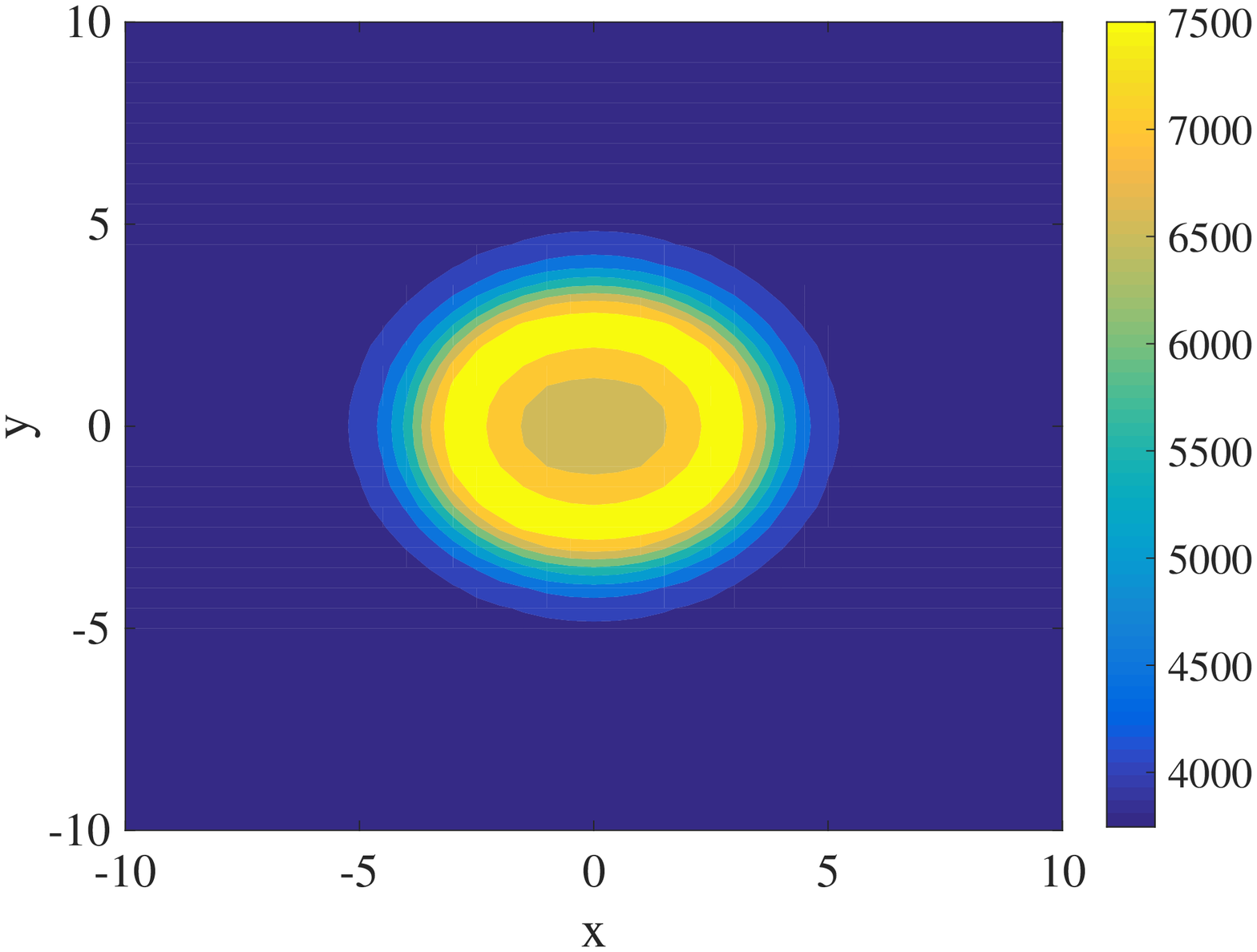}
            \end{minipage}
            }
           \caption{Example 2:  CH$_4$ molar densities at  the the initial(a), 50th(b), and 100th(c)    time step  respectively.}
            \label{EllipseCH4andnC10MolarDensityOfCH4Temperature330K}
 \end{figure}

\begin{figure}
            \centering \subfigure[]{
            \begin{minipage}[b]{0.3\textwidth}
               \centering
             \includegraphics[width=0.95\textwidth,height=0.9\textwidth]{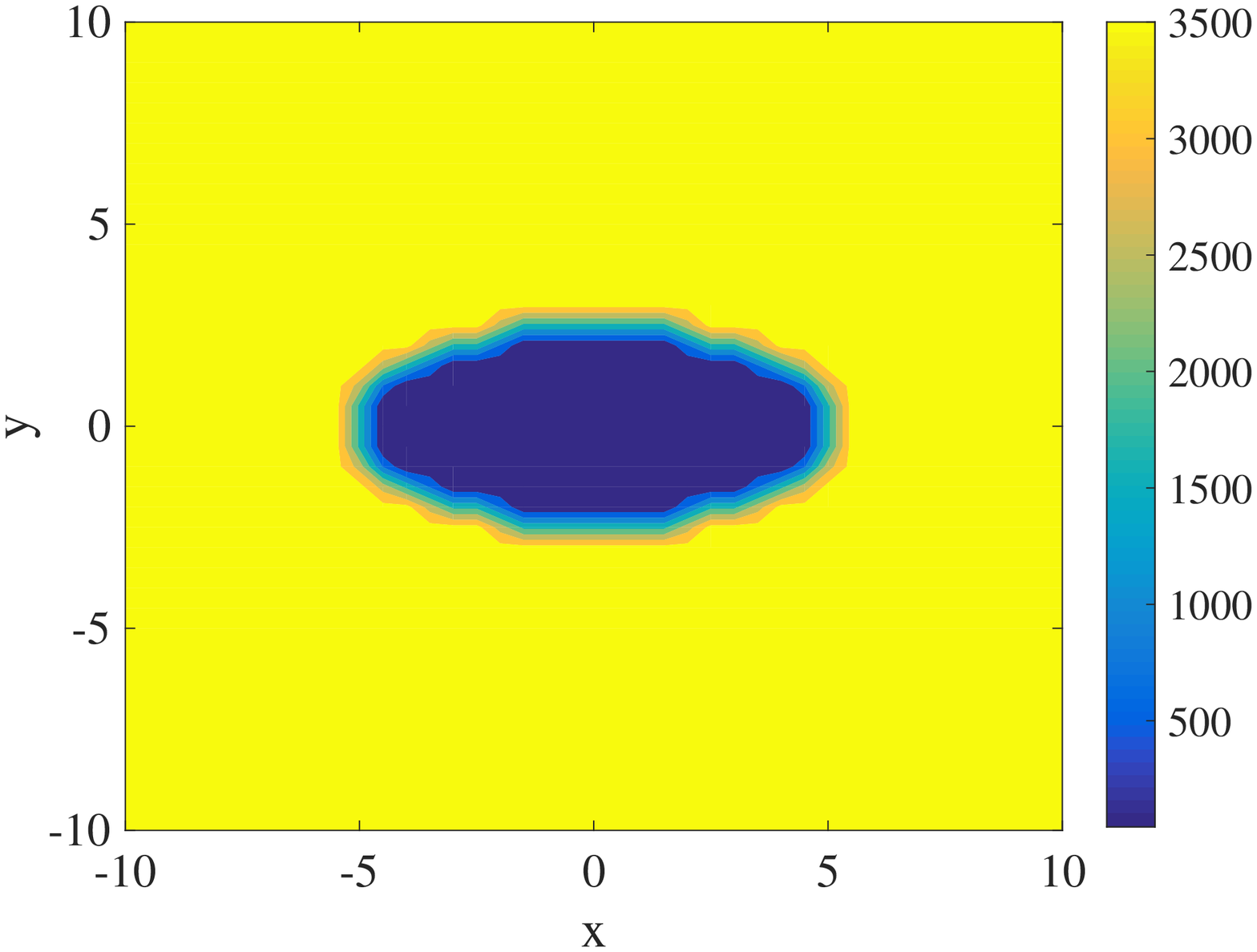}
            \end{minipage}
            }
            \centering \subfigure[]{
            \begin{minipage}[b]{0.3\textwidth}
            \centering
             \includegraphics[width=0.95\textwidth,height=0.9\textwidth]{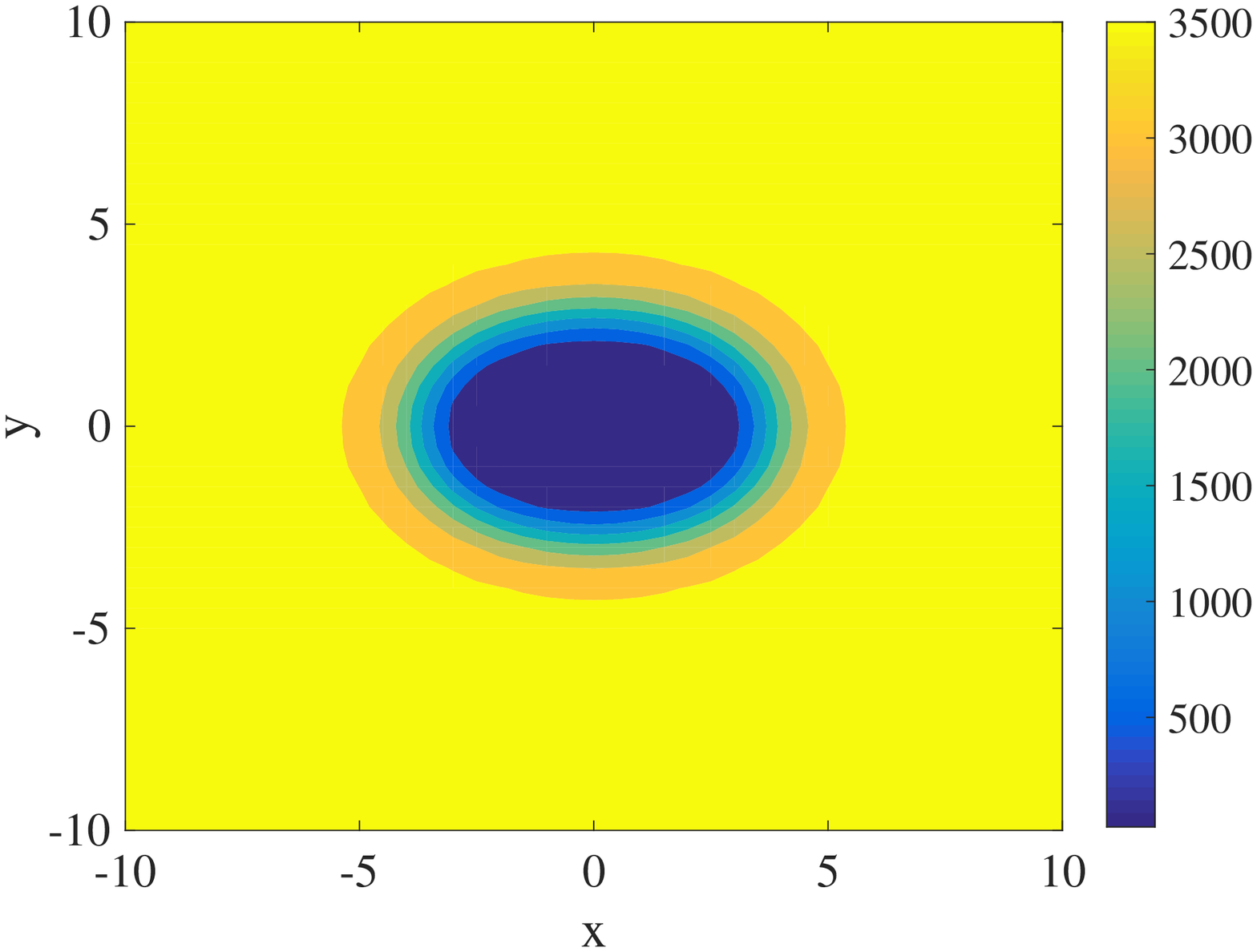}
            \end{minipage}
            }
           \centering \subfigure[]{
            \begin{minipage}[b]{0.3\textwidth}
            \centering
             \includegraphics[width=0.95\textwidth,height=0.9\textwidth]{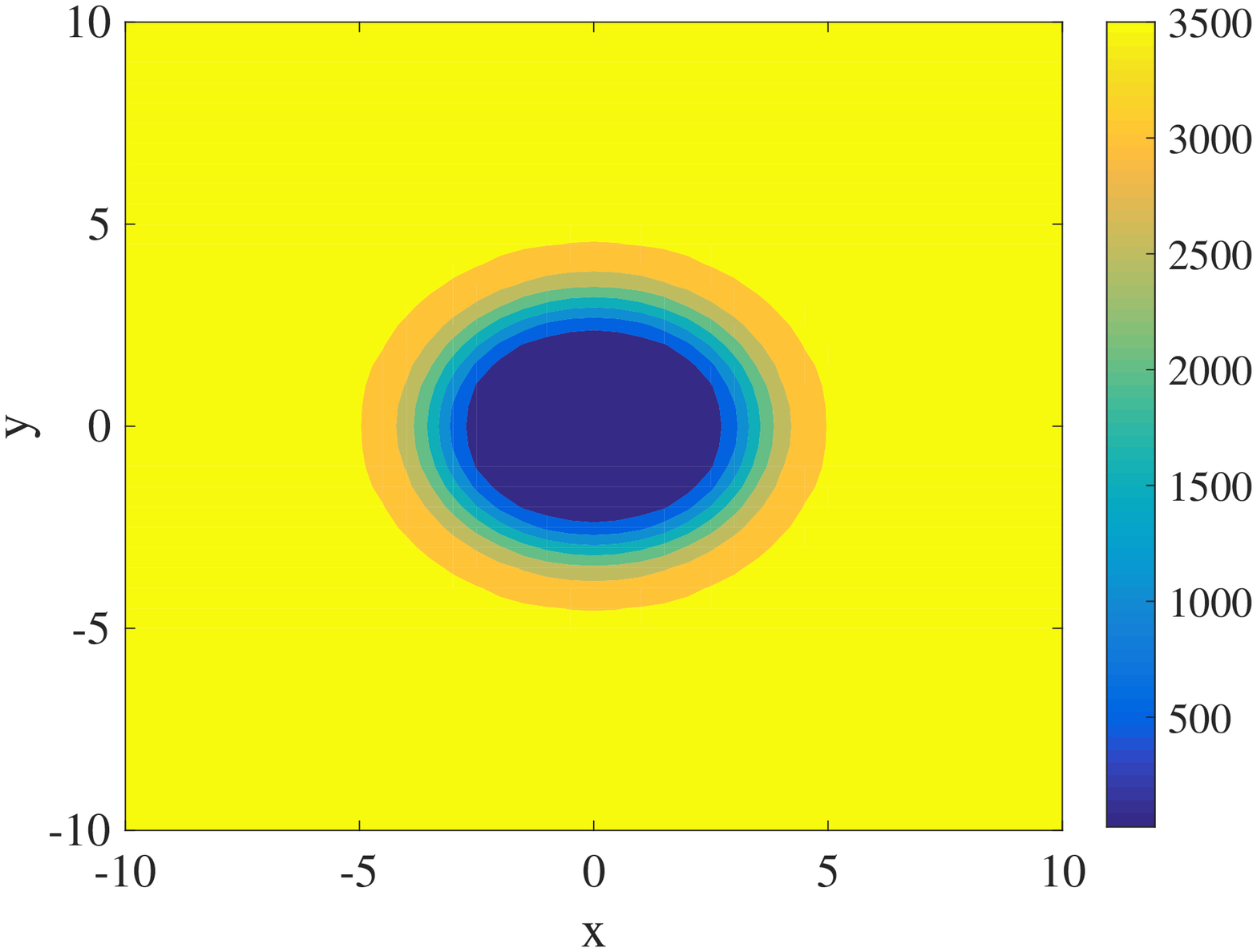}
            \end{minipage}
            }
           \caption{Example 2:  nC$_{10}$ molar densities at  the initial(a), 50th(b), and 100th(c)   time step  respectively.}
            \label{EllipseCH4andnC10MolarDensityOfnC10Temperature330K}
 \end{figure}

\begin{figure}
           \centering \subfigure[]{
            \begin{minipage}[b]{0.3\textwidth}
               \centering
             \includegraphics[width=0.95\textwidth,height=0.9\textwidth]{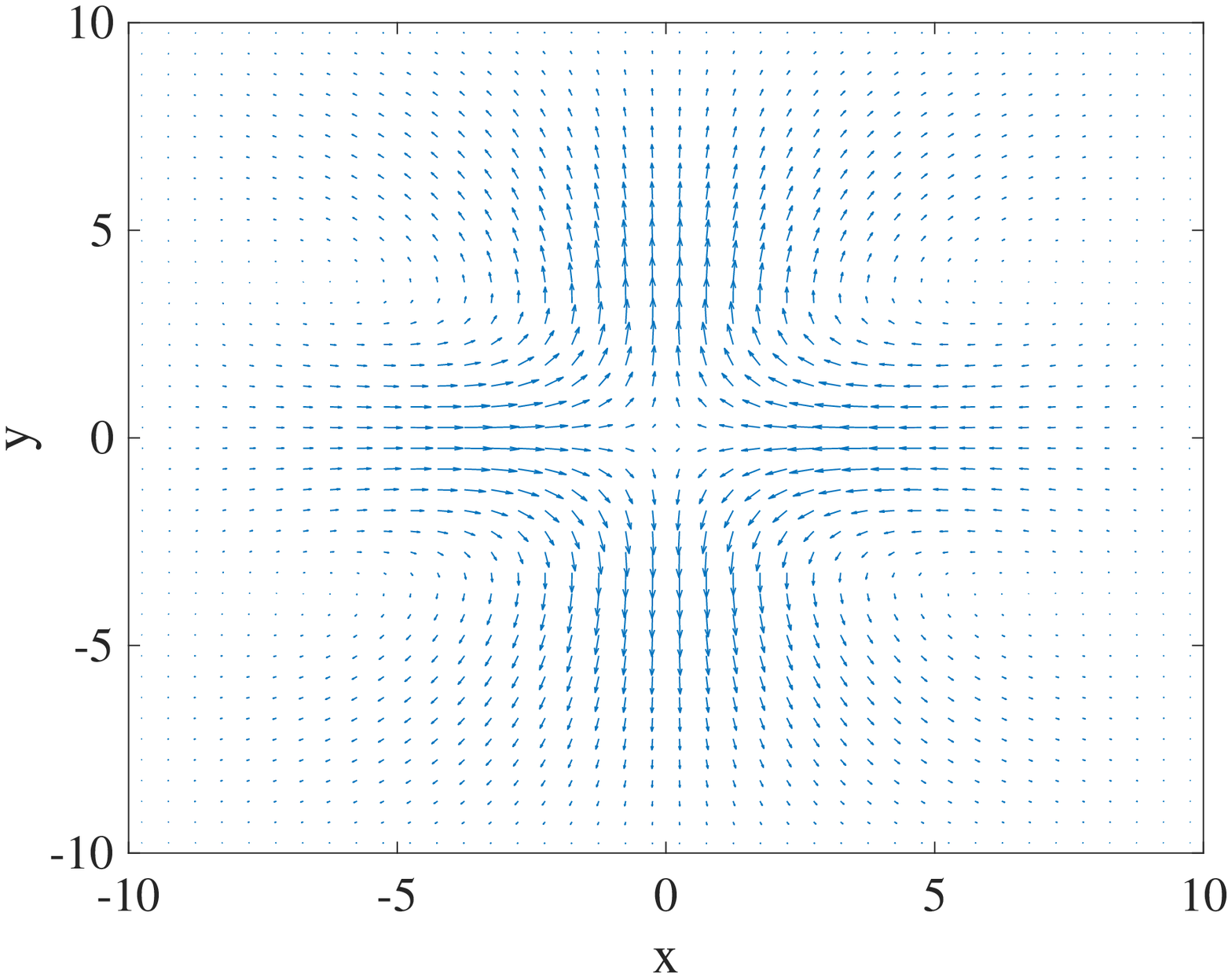}
            \end{minipage}
            }
            \centering \subfigure[]{
            \begin{minipage}[b]{0.3\textwidth}
               \centering
             \includegraphics[width=0.95\textwidth,height=0.9\textwidth]{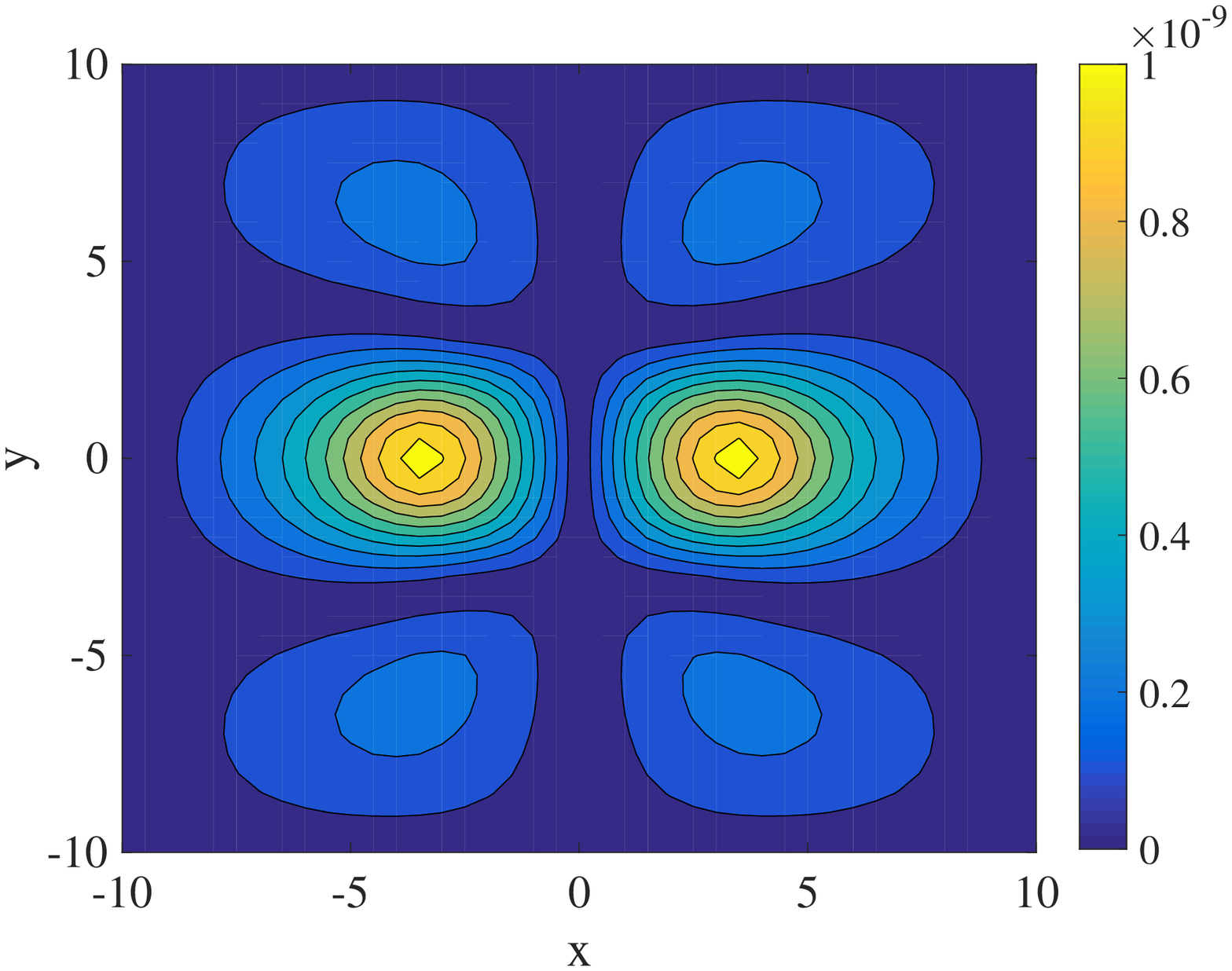}
            \end{minipage}
            }
            \centering \subfigure[]{
            \begin{minipage}[b]{0.3\textwidth}
            \centering
             \includegraphics[width=0.95\textwidth,height=0.9\textwidth]{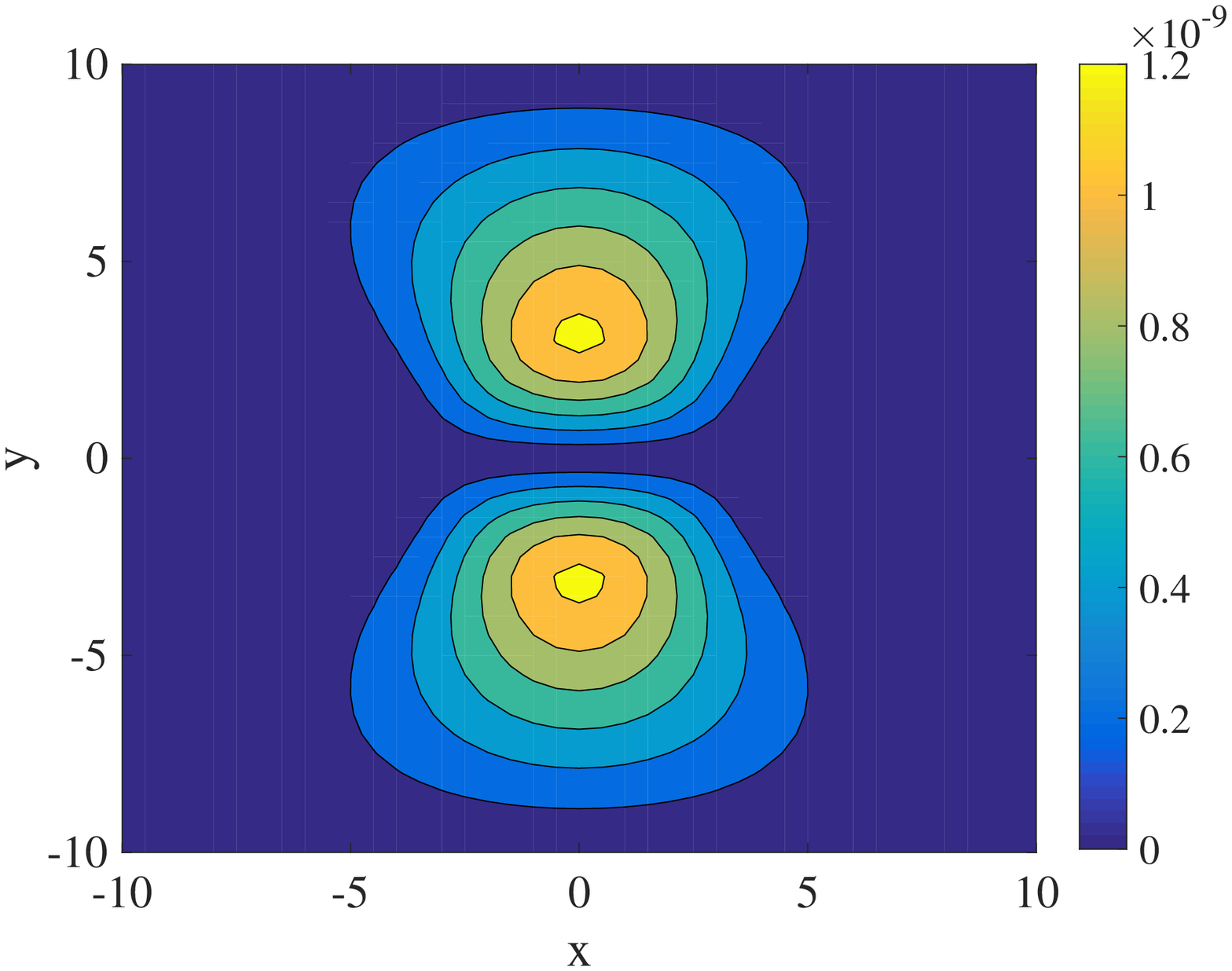}
            \end{minipage}
            }
           \centering \subfigure[]{
            \begin{minipage}[b]{0.3\textwidth}
               \centering
             \includegraphics[width=0.95\textwidth,height=0.9\textwidth]{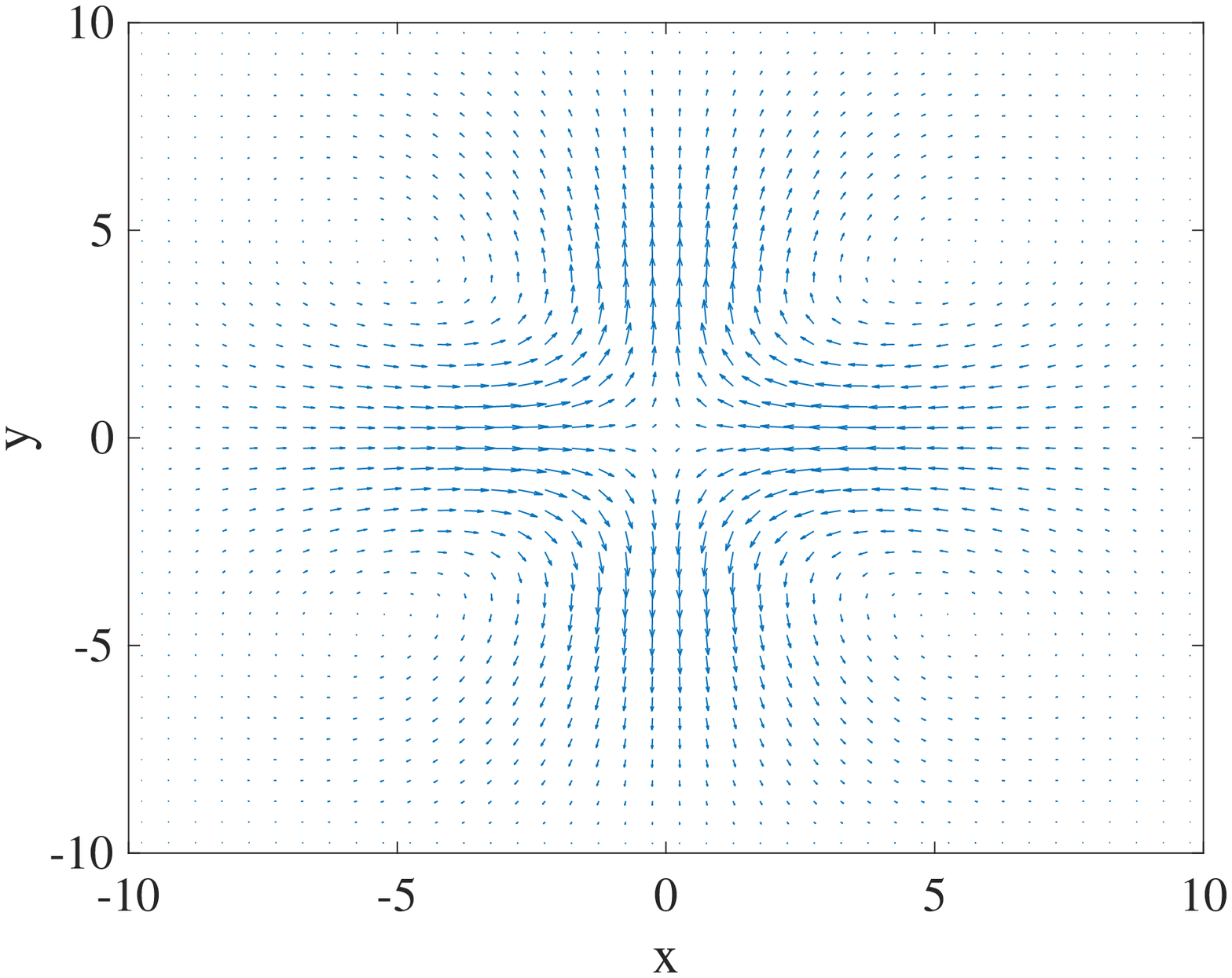}
            \end{minipage}
            }
            \centering \subfigure[]{
            \begin{minipage}[b]{0.3\textwidth}
               \centering
             \includegraphics[width=0.95\textwidth,height=0.9\textwidth]{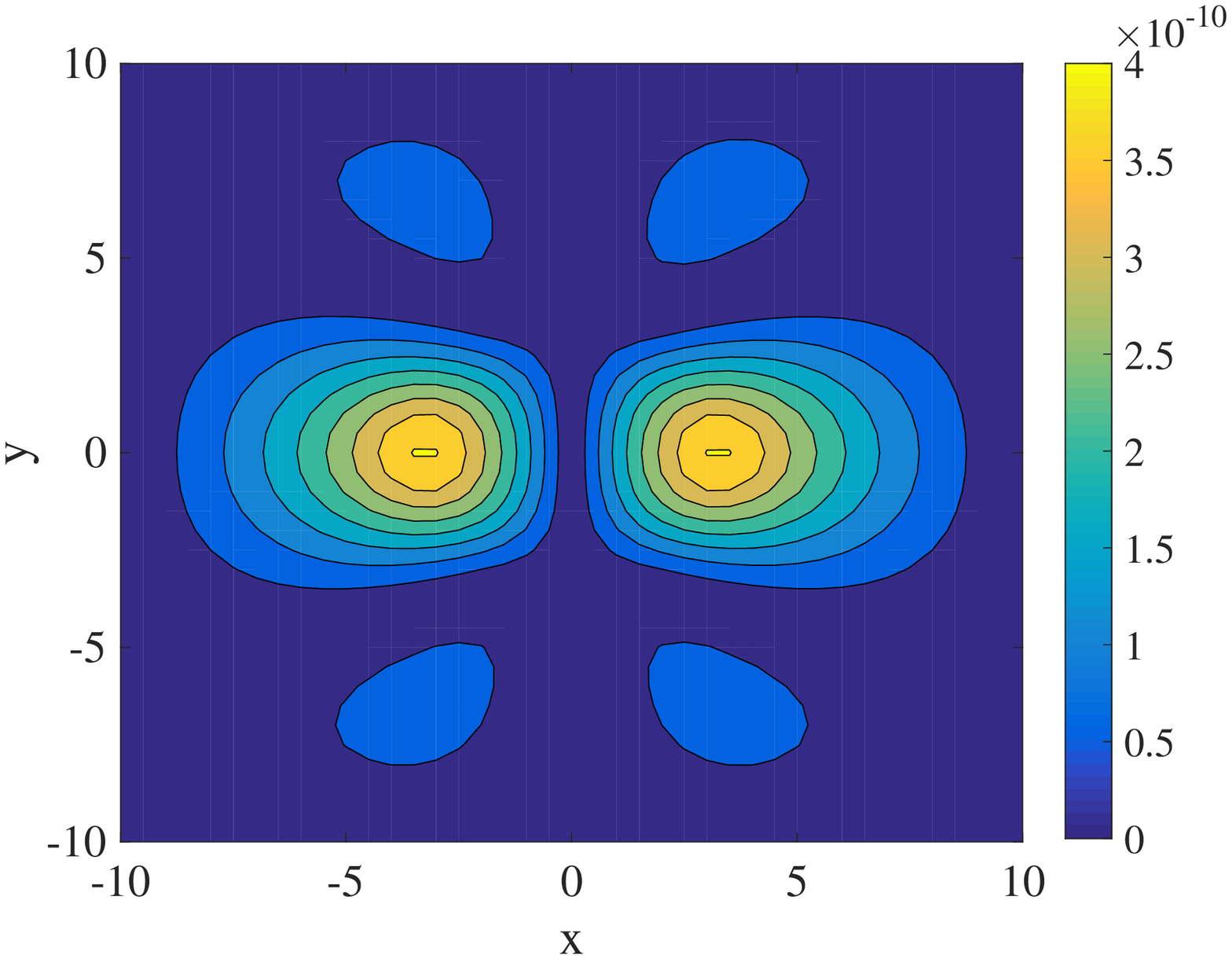}
            \end{minipage}
            }
            \centering \subfigure[]{
            \begin{minipage}[b]{0.3\textwidth}
            \centering
             \includegraphics[width=0.95\textwidth,height=0.9\textwidth]{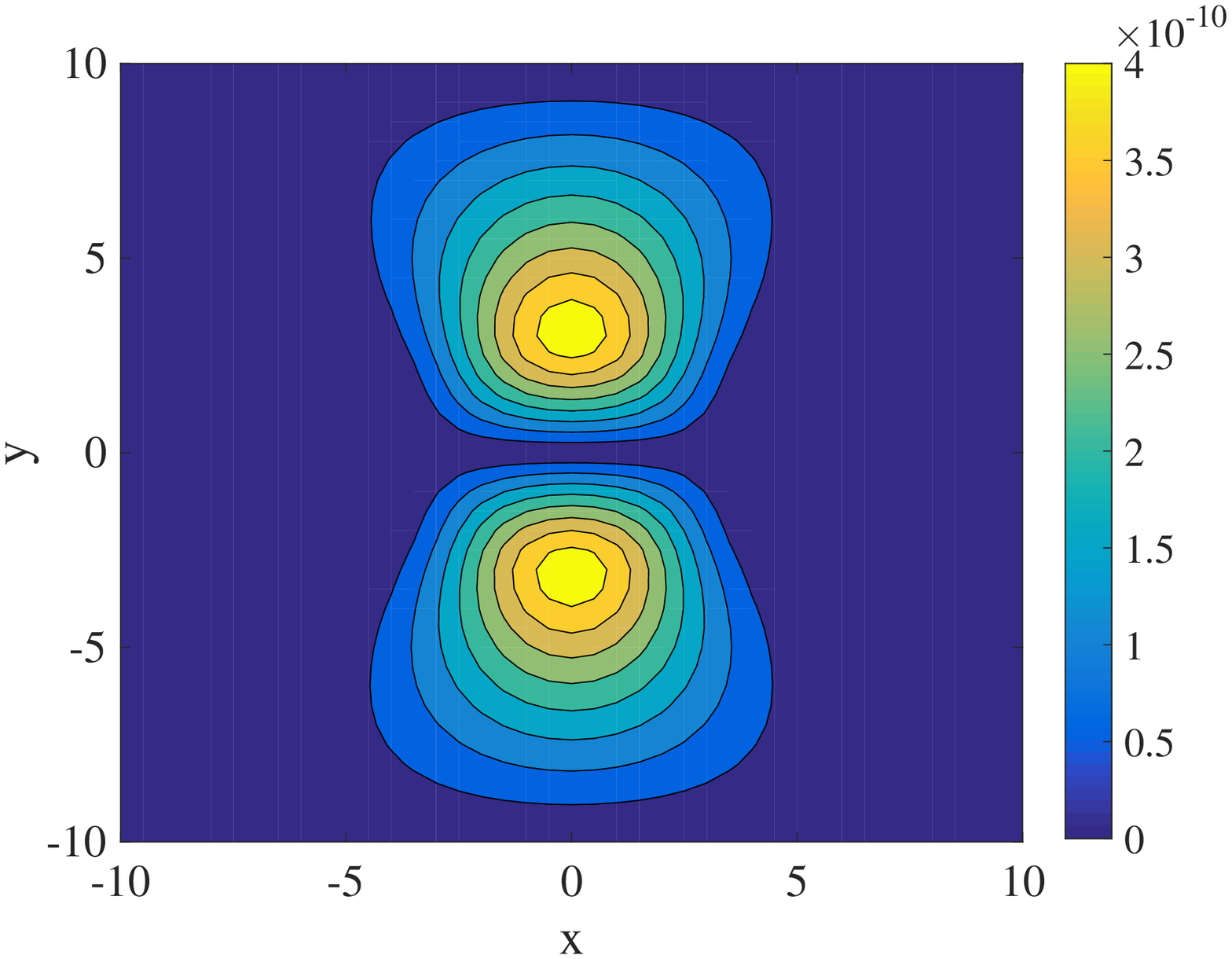}
            \end{minipage}
            }
            \caption{Example 2:   flow quivers (left column), magnitude contours of $x$-direction velocity component (center column), and magnitude contours of $y$-direction velocity component (right column)   at the  50th(top row) and 100th(bottom row) time step  respectively.}
            \label{EllipseCH4andnC10VelocityTemperature330K}
 \end{figure}

\subsection{Example 3:   bubble merging}
In this example, there are two bubbles at the initial moment, and the initial molar densities of methane and decane in gas and liquid phases are the same to those in Example 2.  The  temperature of the fluid keeps constant at 330K.  We simulate its dynamical evolution  for 120  time steps with  the time step size   $10^{-5}$s.

The total (free) energy dissipation profile with time steps is plotted   in Figure \ref{TwoEllipseCH4andnC10TotalEnergyTemperature330}(a),  and Figure \ref{TwoEllipseCH4andnC10TotalEnergyTemperature330}(b)  is a zoom-in plot in the   later time steps.  The results in Figure \ref{TwoEllipseCH4andnC10TotalEnergyTemperature330} confirm  that the total  energy  always decays  with time steps and consequently the proposed method is effective.

Figure \ref{TwoEllipseCH4andnC10MolarDensityOfCH4Temperature330K} and Figure \ref{TwoEllipseCH4andnC10MolarDensityOfnC10Temperature330K} depict the initial molar density configurations  and   molar density changes with time steps    for  methane and decane.  In Figures \ref{TwoEllipseCH4andnC10VelocityTemperature330K}, we illustrate  the velocity fields and  magnitudes of  velocity components at different time steps. 
 It can be observed from these simulated results that in the presence of gradients of chemical potentials,   the bubbles   is first emerging  with each other in the sequent dynamical evolution, and at the finial  time,  the merged  bubbles gradually  tend  to a circle.

\begin{figure}
            \centering \subfigure[]{
            \begin{minipage}[b]{0.45\textwidth}
               \centering
             \includegraphics[width=0.95\textwidth,height=2in]{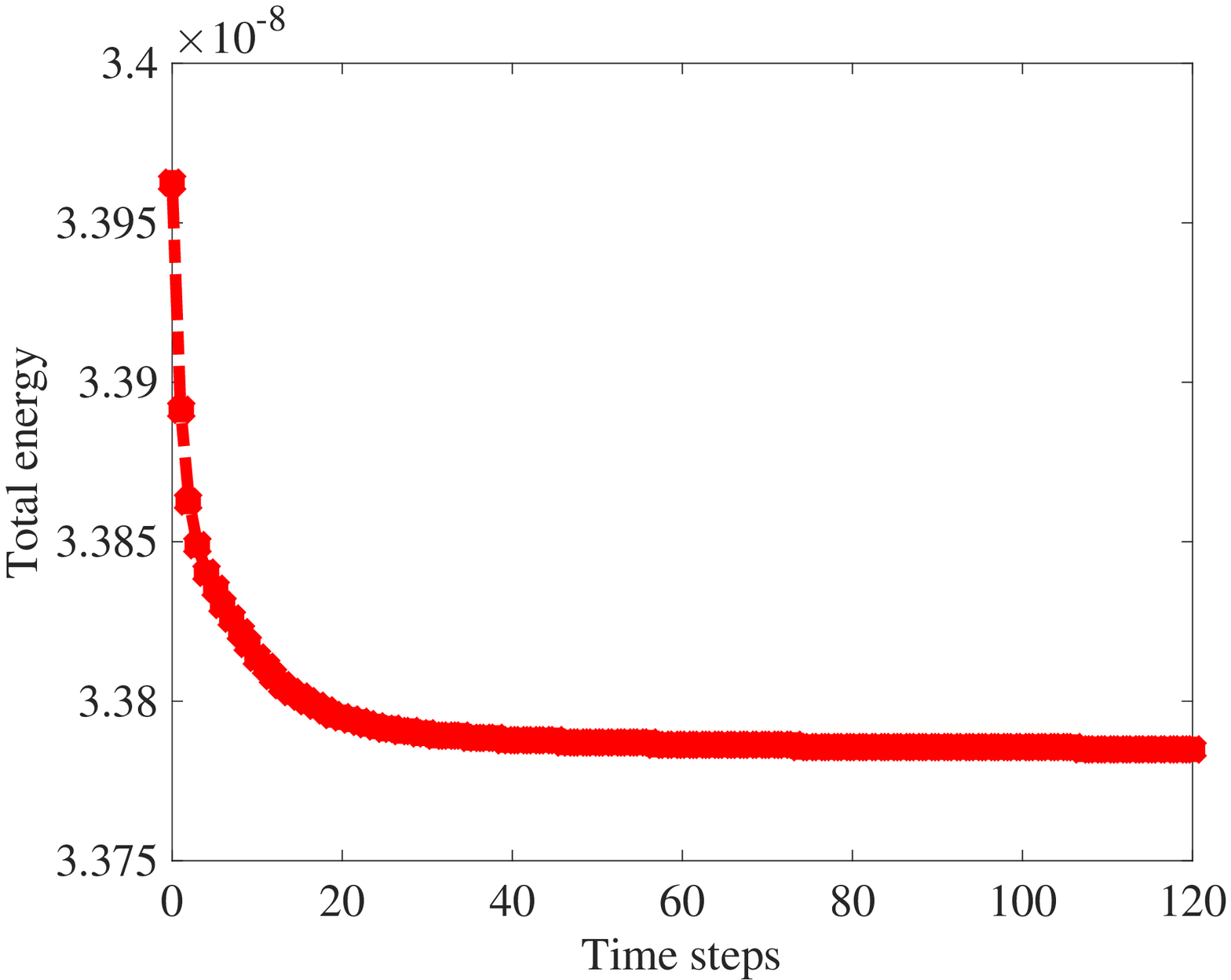}
            \end{minipage}
            }
            \centering \subfigure[]{
            \begin{minipage}[b]{0.45\textwidth}
            \centering
             \includegraphics[width=0.95\textwidth,height=2in]{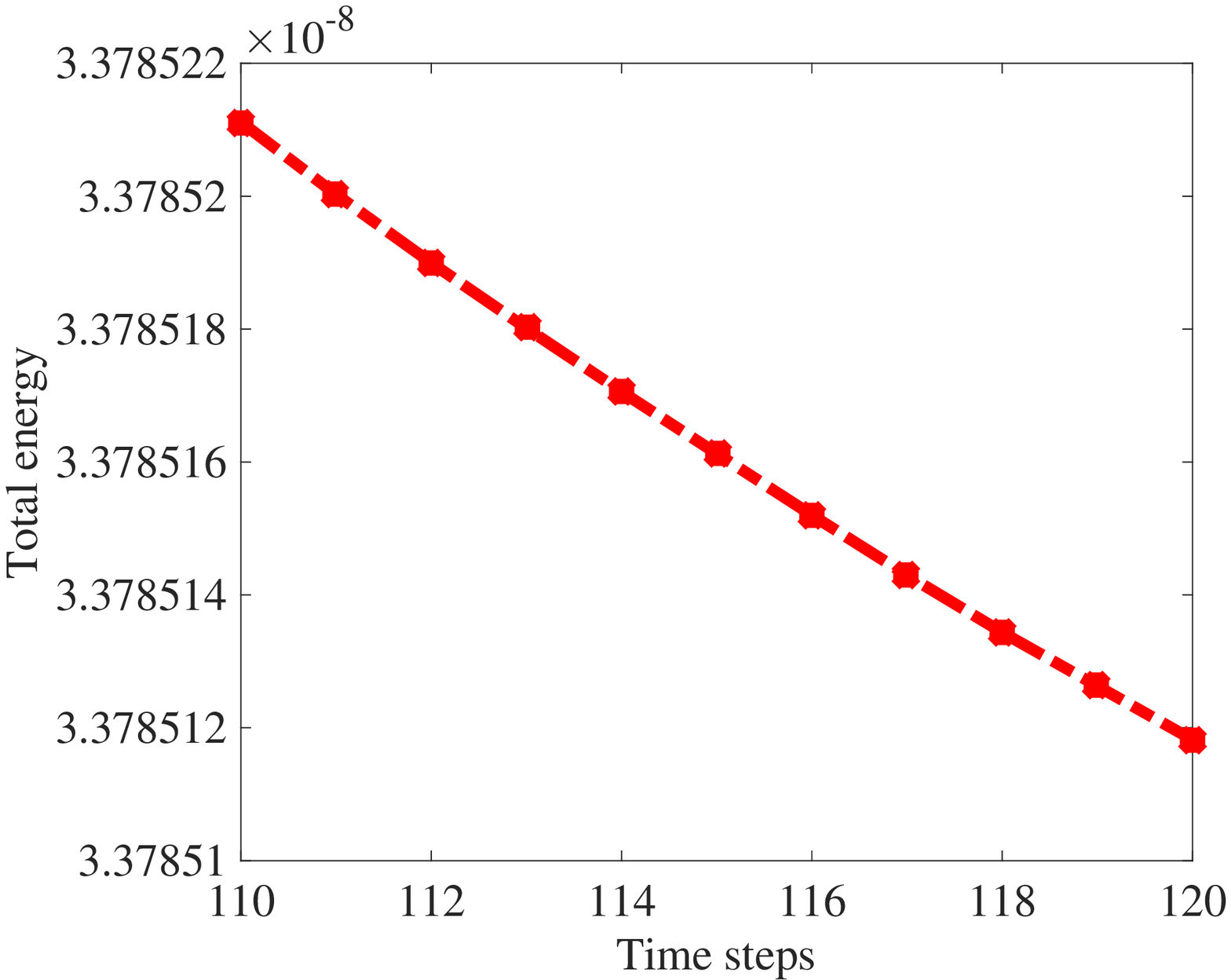}
            \end{minipage}
            }
           \caption{Example 3: energy dissipation with time steps.}
            \label{TwoEllipseCH4andnC10TotalEnergyTemperature330}
 \end{figure}

\begin{figure}
            \centering \subfigure[]{
            \begin{minipage}[b]{0.3\textwidth}
               \centering
             \includegraphics[width=0.95\textwidth,height=0.9\textwidth]{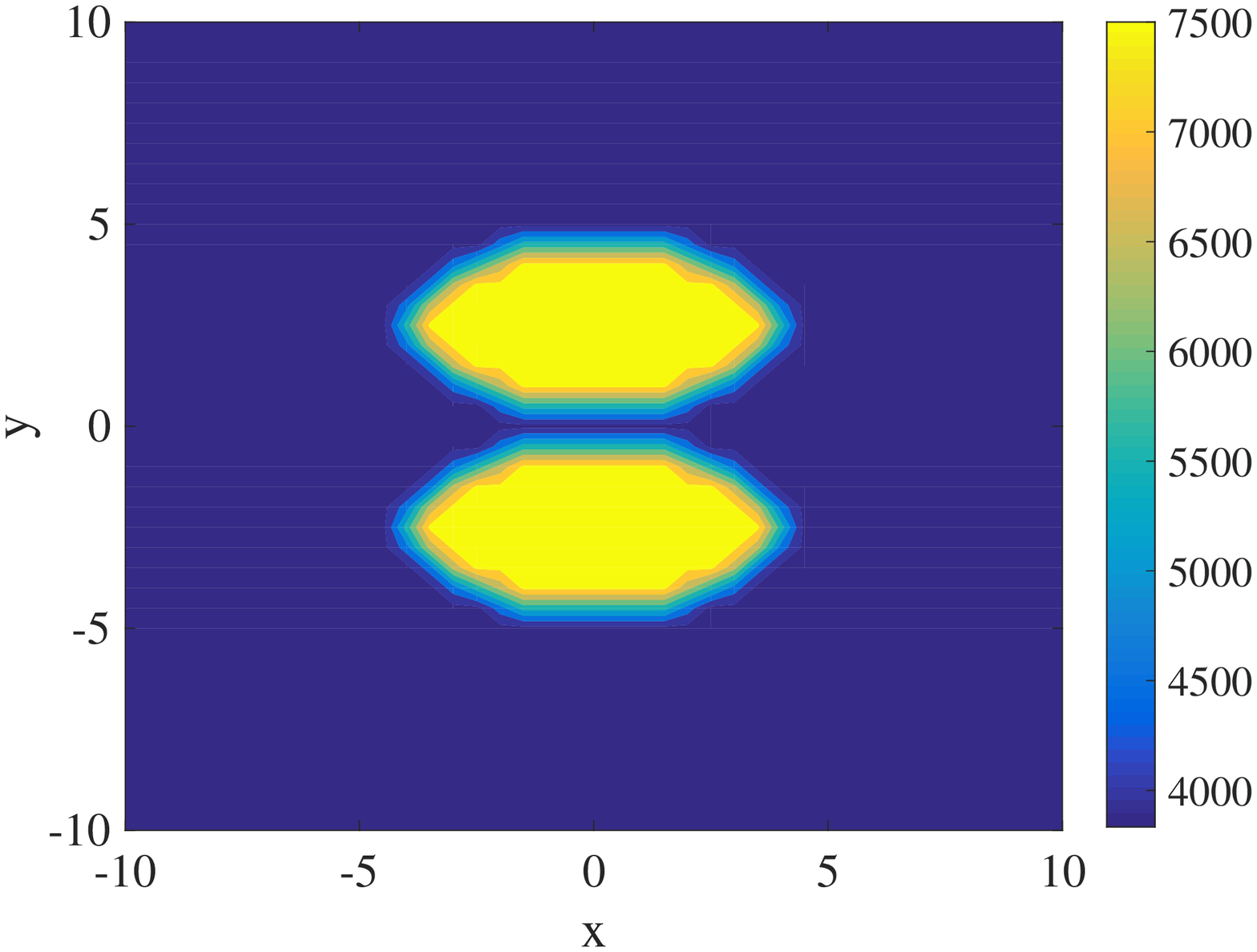}
            \end{minipage}
            }
            \centering \subfigure[]{
            \begin{minipage}[b]{0.3\textwidth}
            \centering
             \includegraphics[width=0.95\textwidth,height=0.9\textwidth]{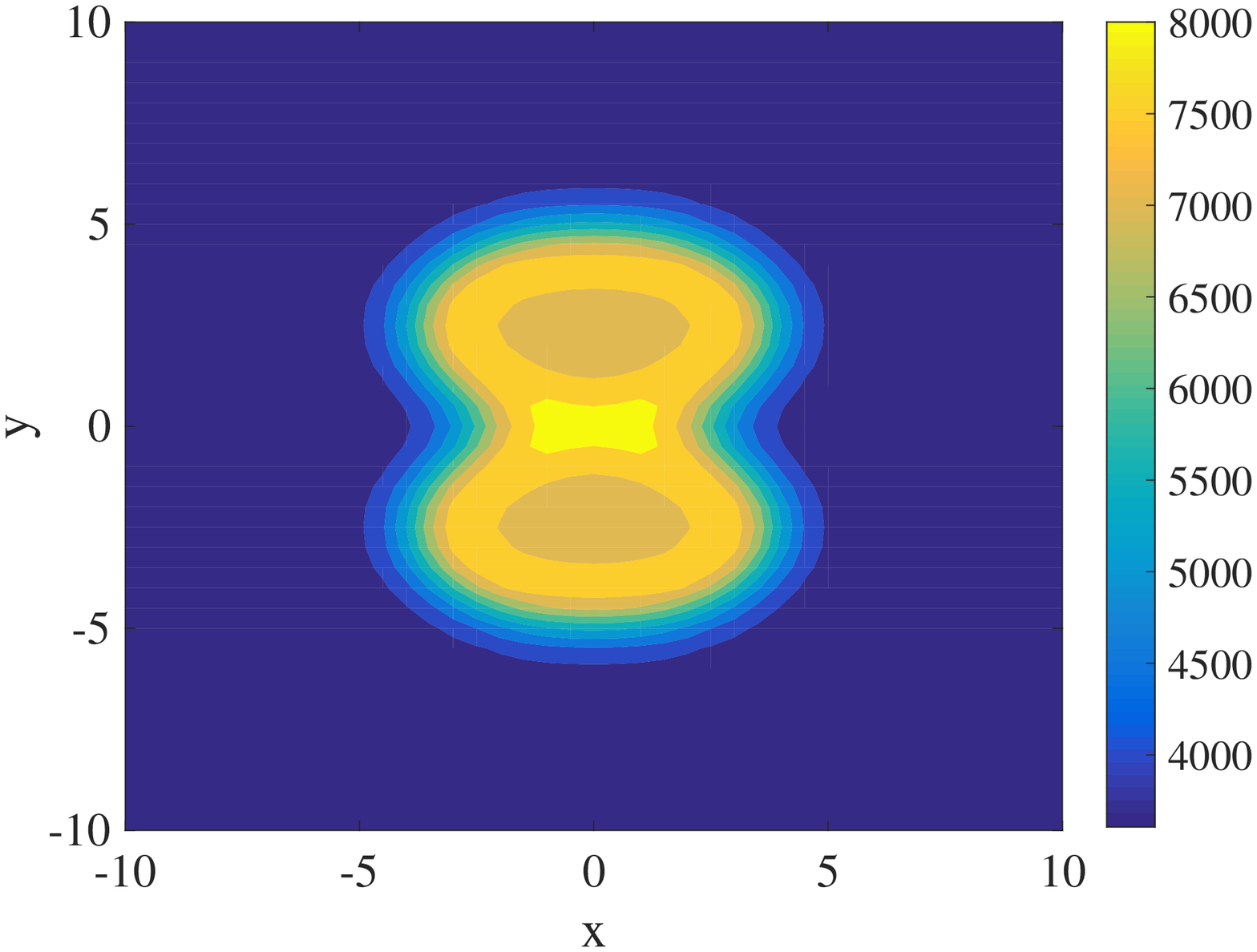}
            \end{minipage}
            }
           \centering \subfigure[]{
            \begin{minipage}[b]{0.3\textwidth}
            \centering
             \includegraphics[width=0.95\textwidth,height=0.9\textwidth]{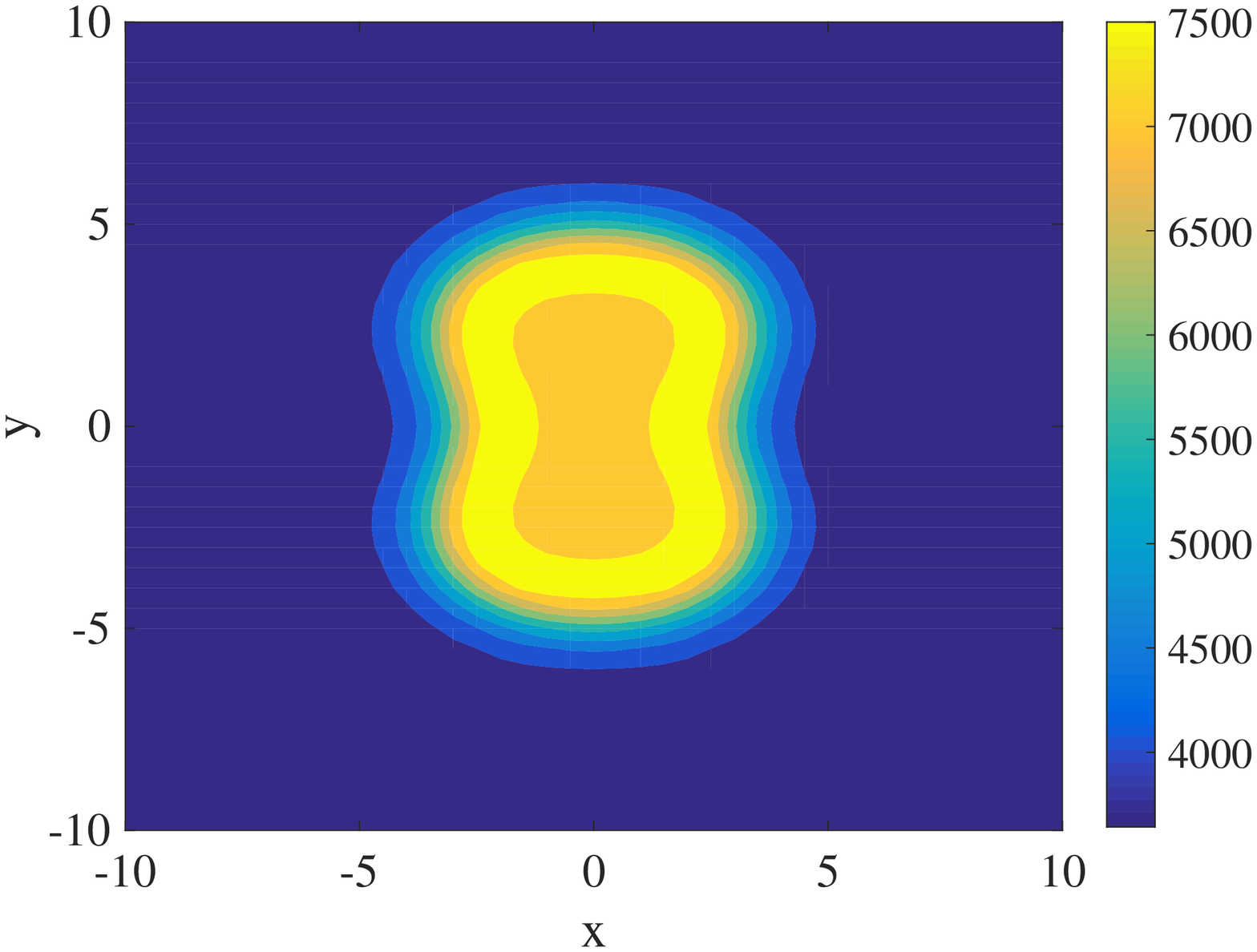}
            \end{minipage}
            }
           \centering \subfigure[]{
            \begin{minipage}[b]{0.3\textwidth}
               \centering
             \includegraphics[width=0.95\textwidth,height=0.9\textwidth]{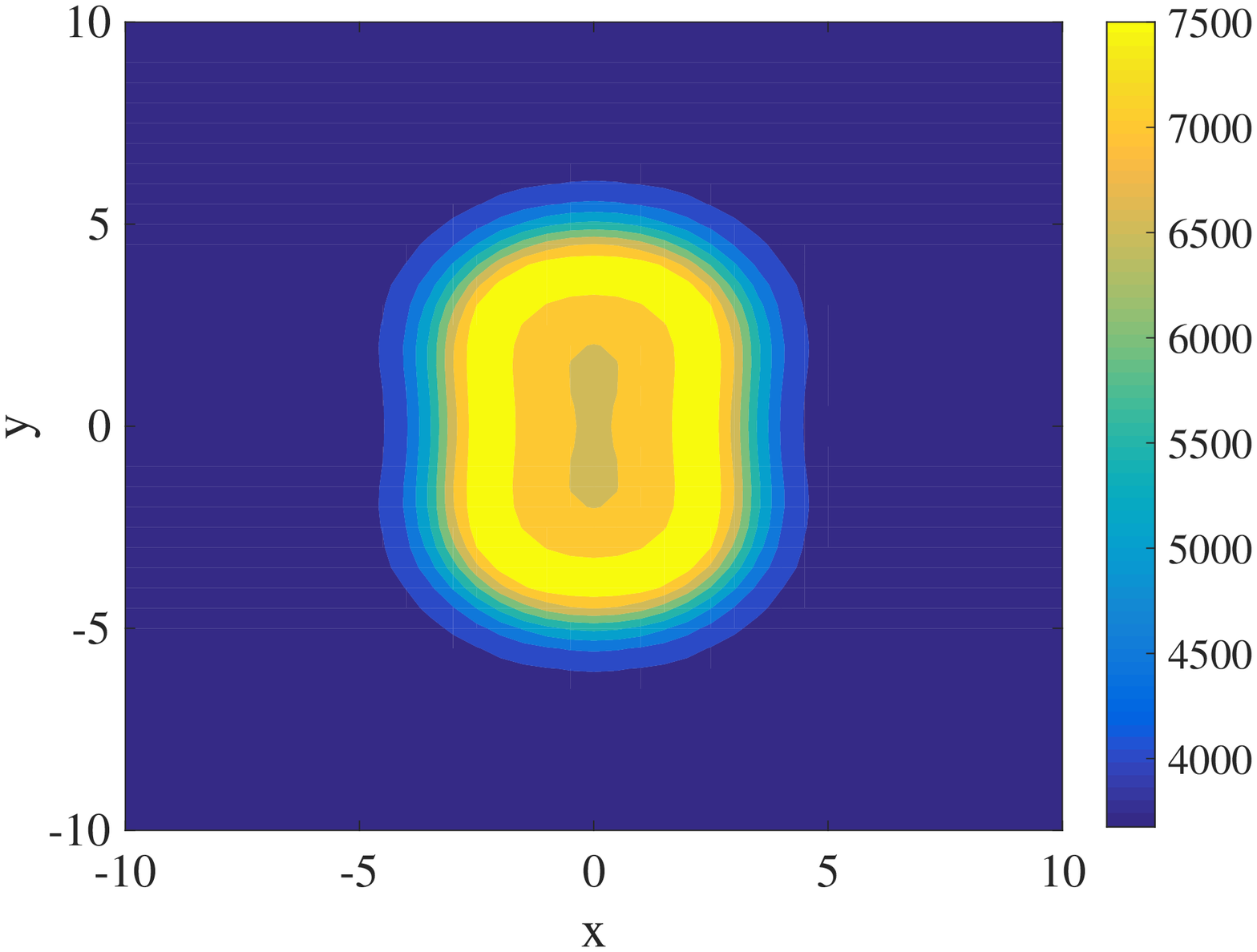}
            \end{minipage}
            }
            \centering \subfigure[]{
            \begin{minipage}[b]{0.3\textwidth}
            \centering
             \includegraphics[width=0.95\textwidth,height=0.9\textwidth]{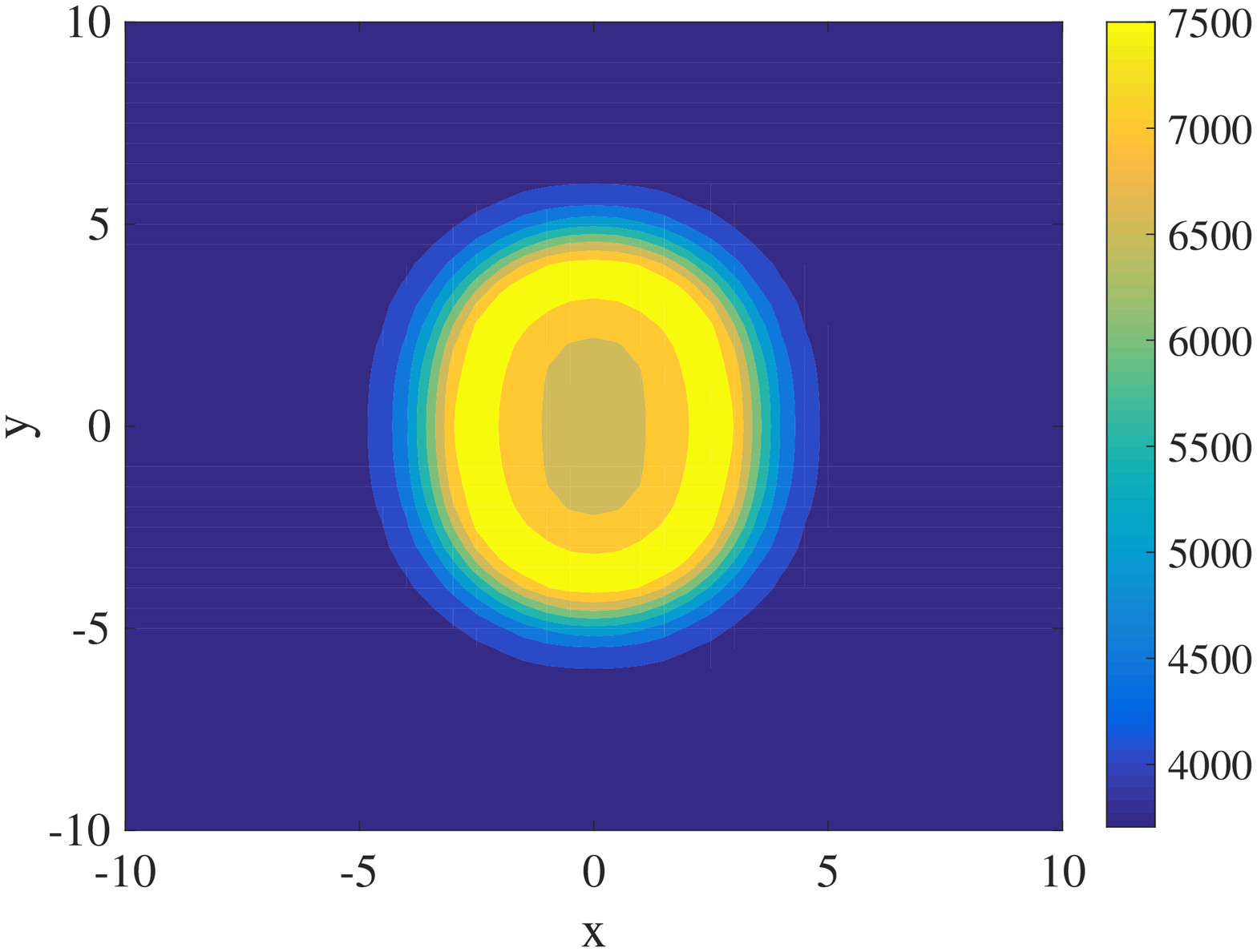}
            \end{minipage}
            }
           \centering \subfigure[]{
            \begin{minipage}[b]{0.3\textwidth}
            \centering
             \includegraphics[width=0.95\textwidth,height=0.9\textwidth]{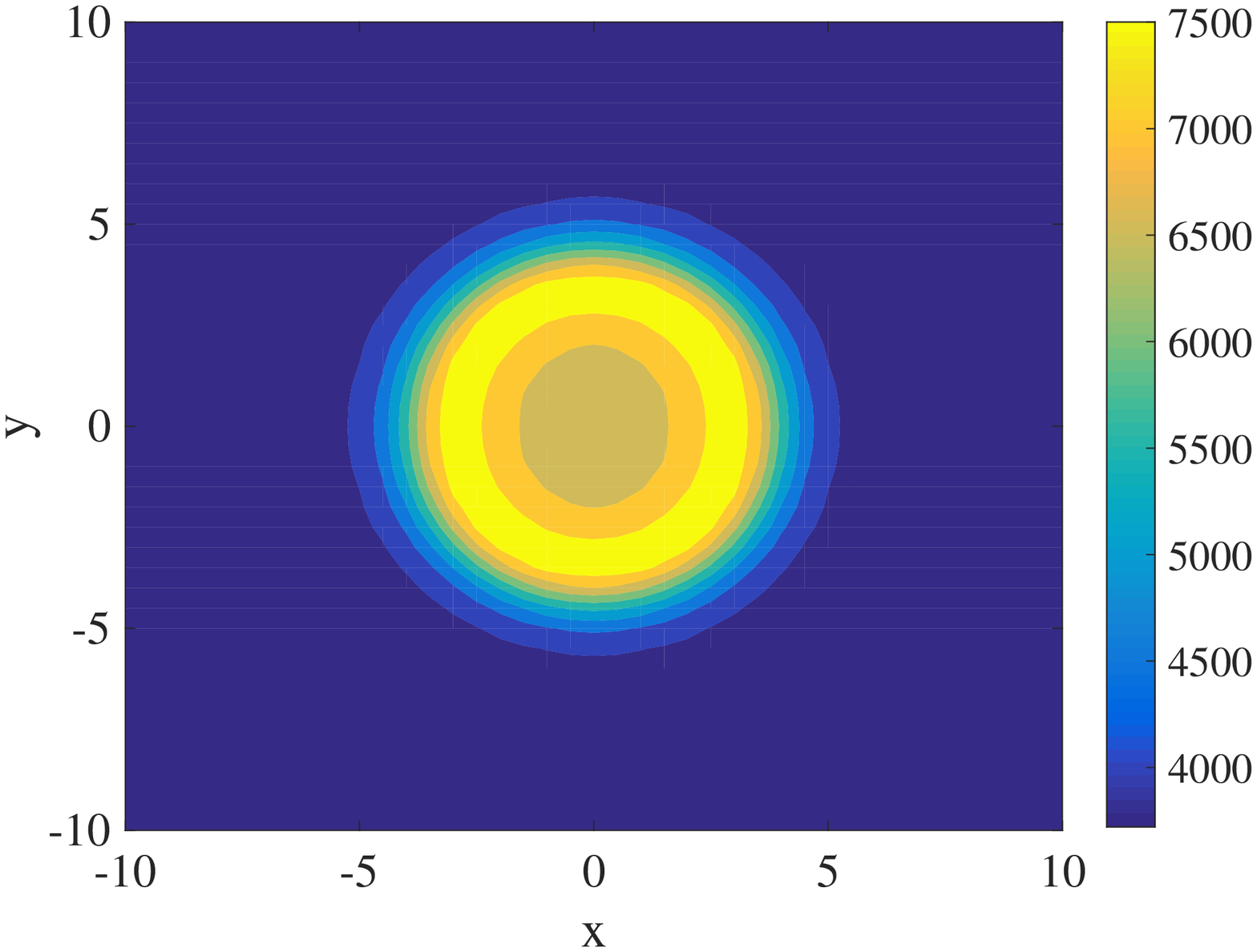}
            \end{minipage}
            }
           \caption{Example 3: CH$_4$ molar densities at  the the initial(a), 10th(b), 20th(c), 30th(d), 50th(e) and 120th(f)   time step  respectively.}
            \label{TwoEllipseCH4andnC10MolarDensityOfCH4Temperature330K}
 \end{figure}

\begin{figure}
            \centering \subfigure[]{
            \begin{minipage}[b]{0.3\textwidth}
               \centering
             \includegraphics[width=0.95\textwidth,height=0.9\textwidth]{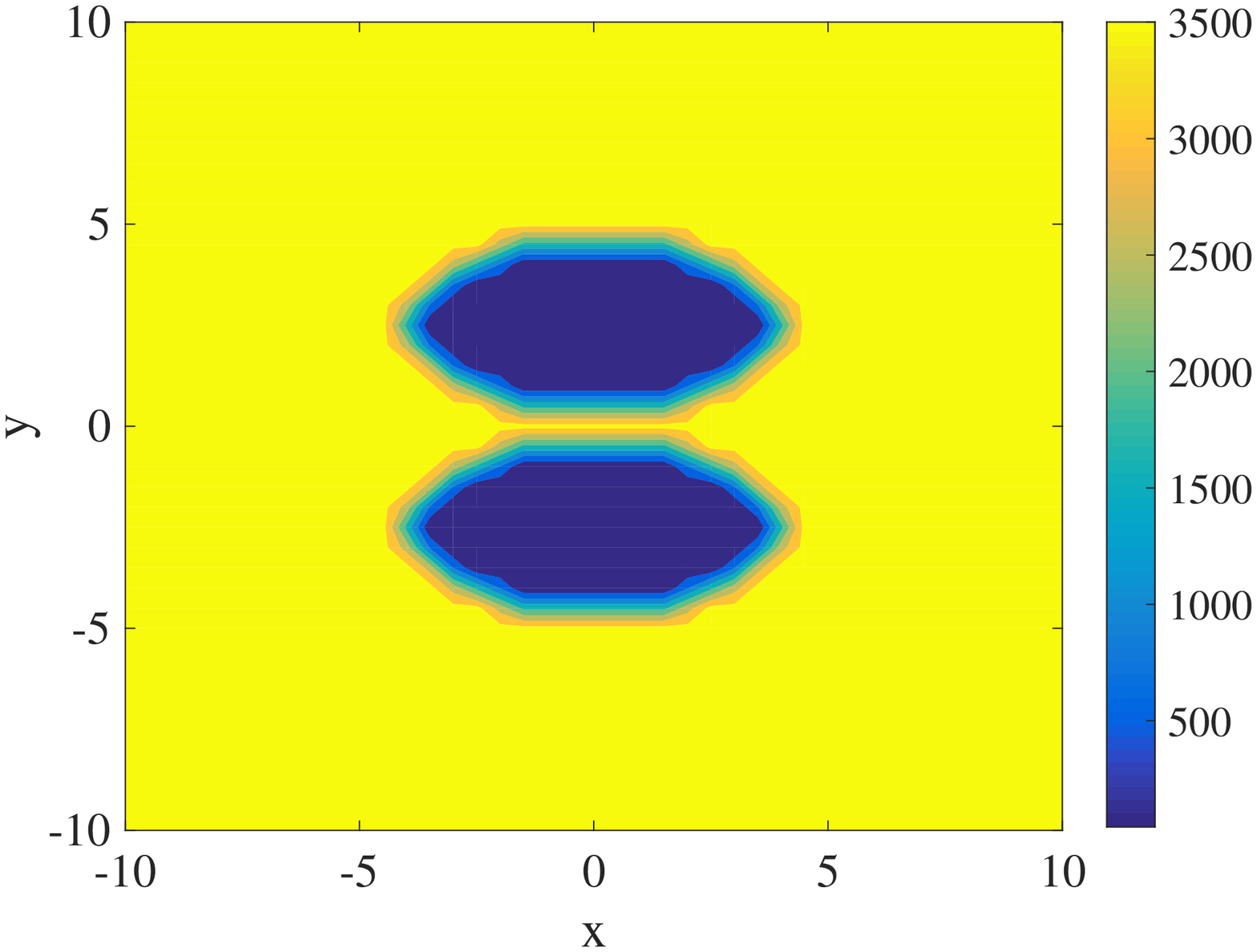}
            \end{minipage}
            }
            \centering \subfigure[]{
            \begin{minipage}[b]{0.3\textwidth}
            \centering
             \includegraphics[width=0.95\textwidth,height=0.9\textwidth]{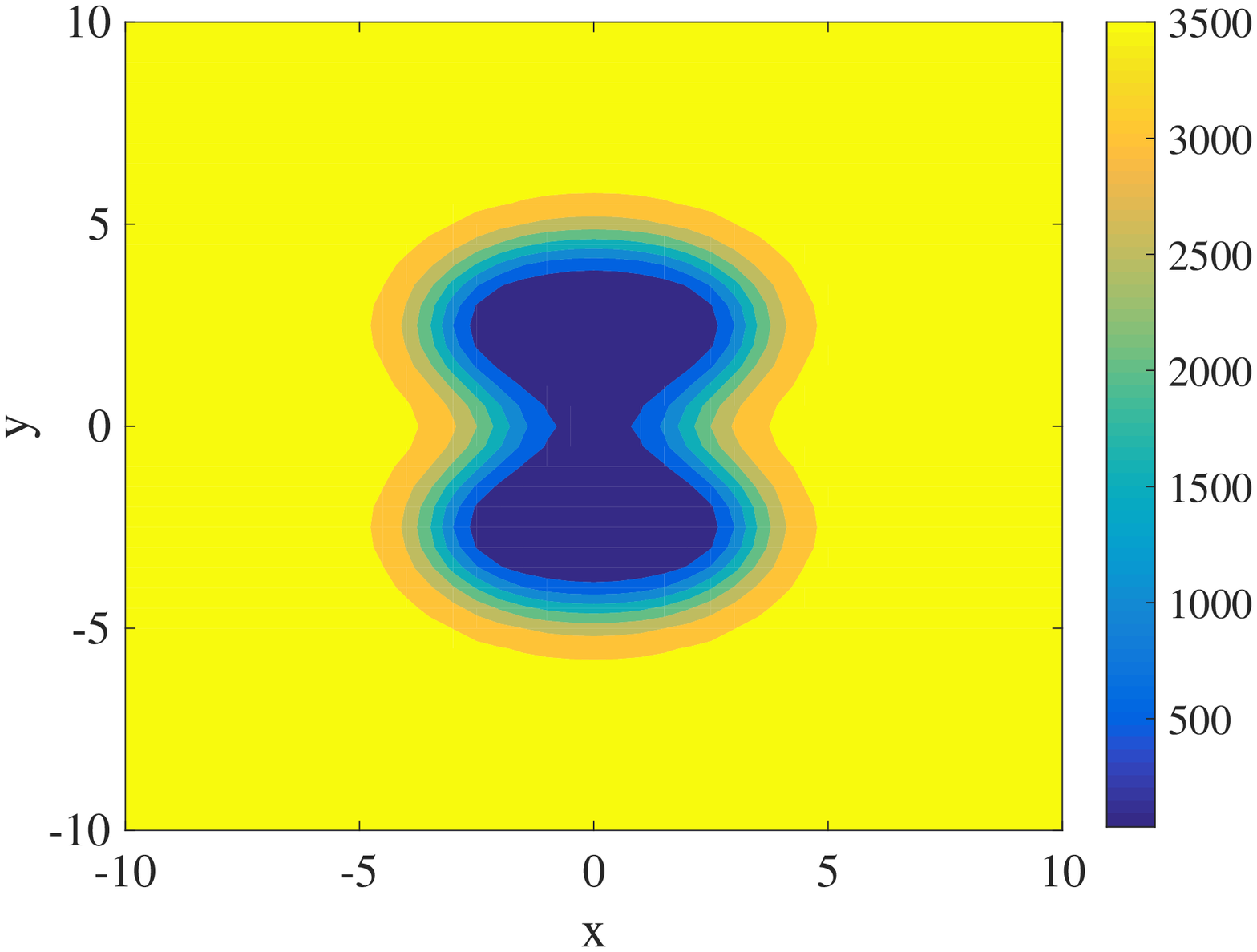}
            \end{minipage}
            }
           \centering \subfigure[]{
            \begin{minipage}[b]{0.3\textwidth}
            \centering
             \includegraphics[width=0.95\textwidth,height=0.9\textwidth]{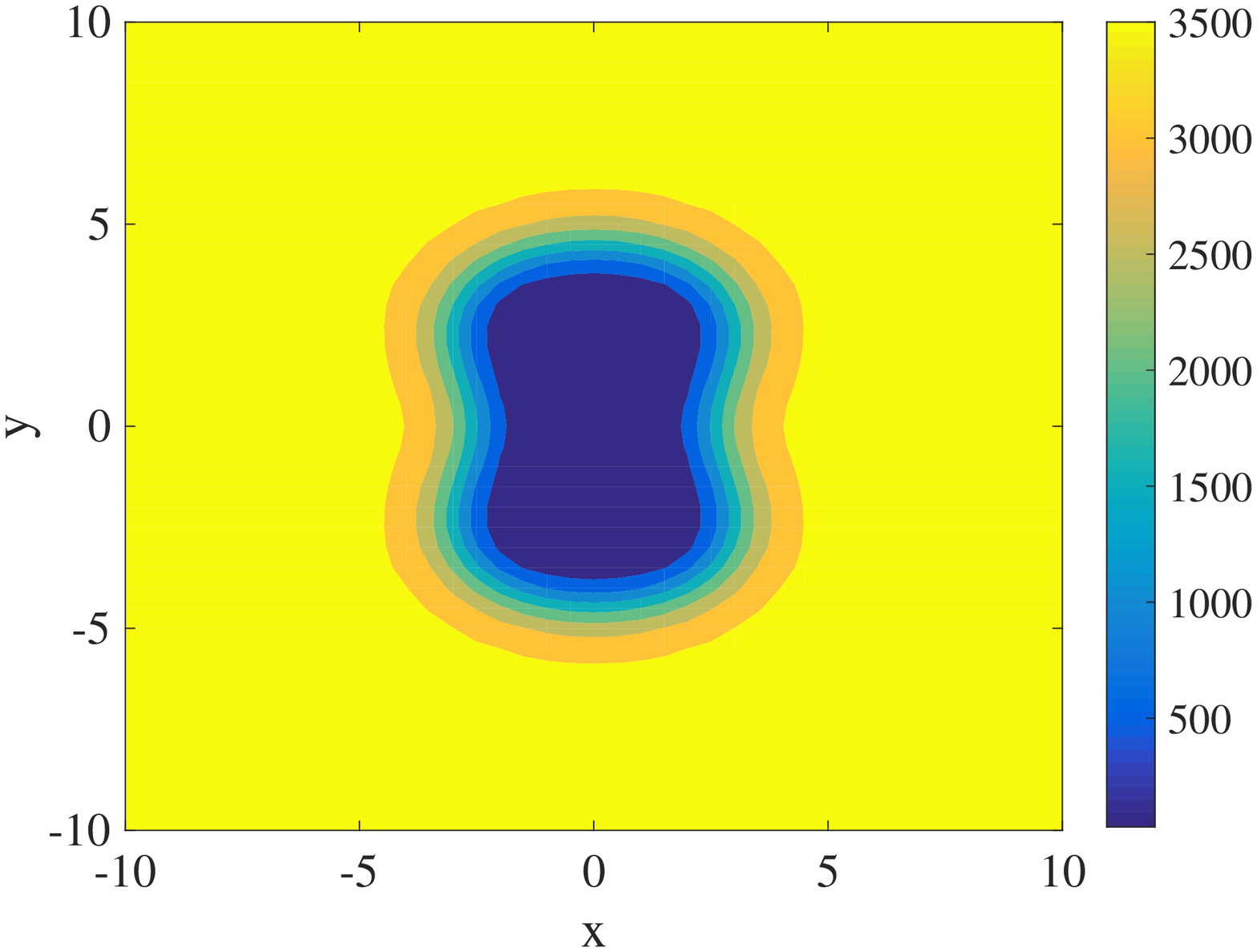}
            \end{minipage}
            }
           \centering \subfigure[]{
            \begin{minipage}[b]{0.3\textwidth}
               \centering
             \includegraphics[width=0.95\textwidth,height=0.9\textwidth]{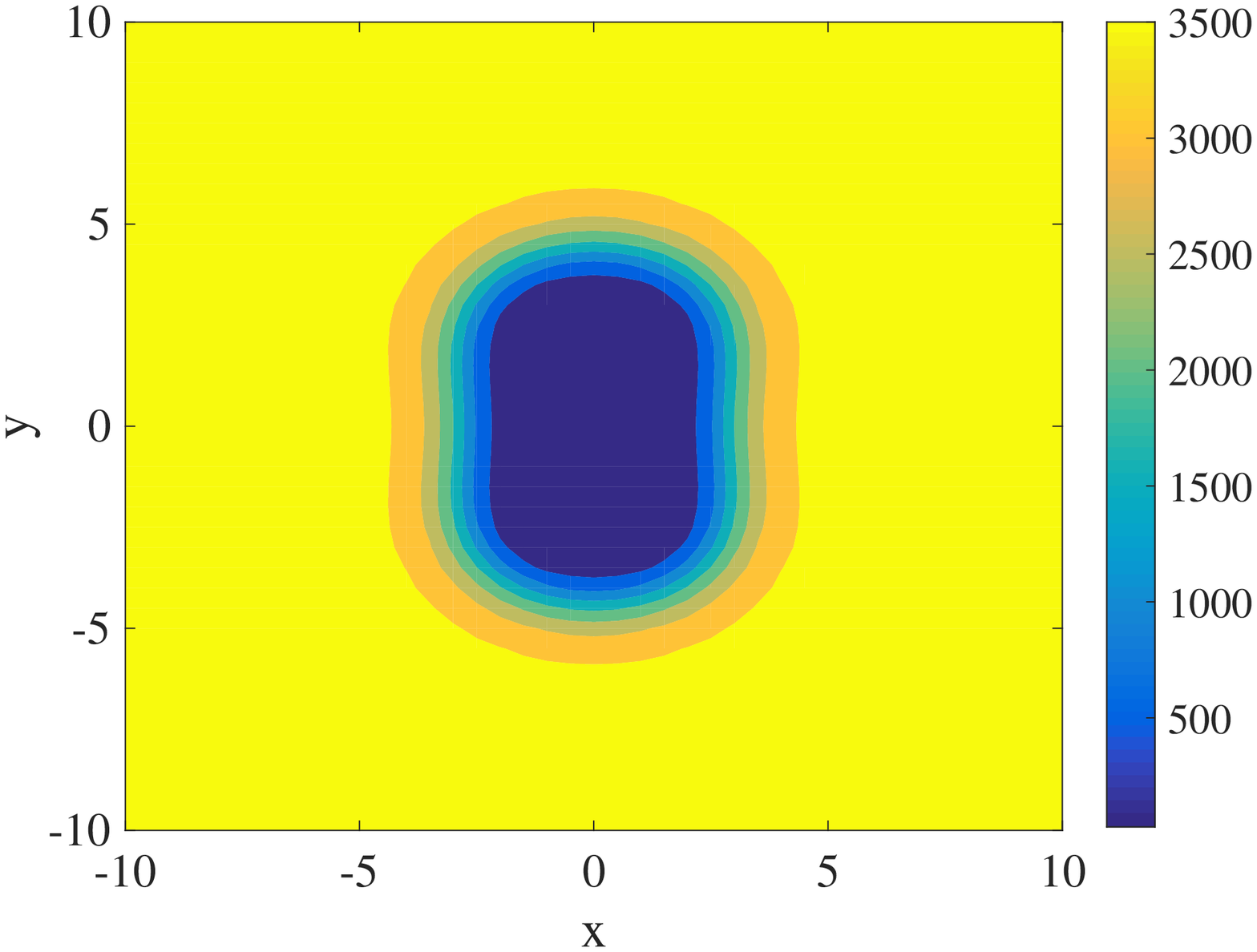}
            \end{minipage}
            }
            \centering \subfigure[]{
            \begin{minipage}[b]{0.3\textwidth}
            \centering
             \includegraphics[width=0.95\textwidth,height=0.9\textwidth]{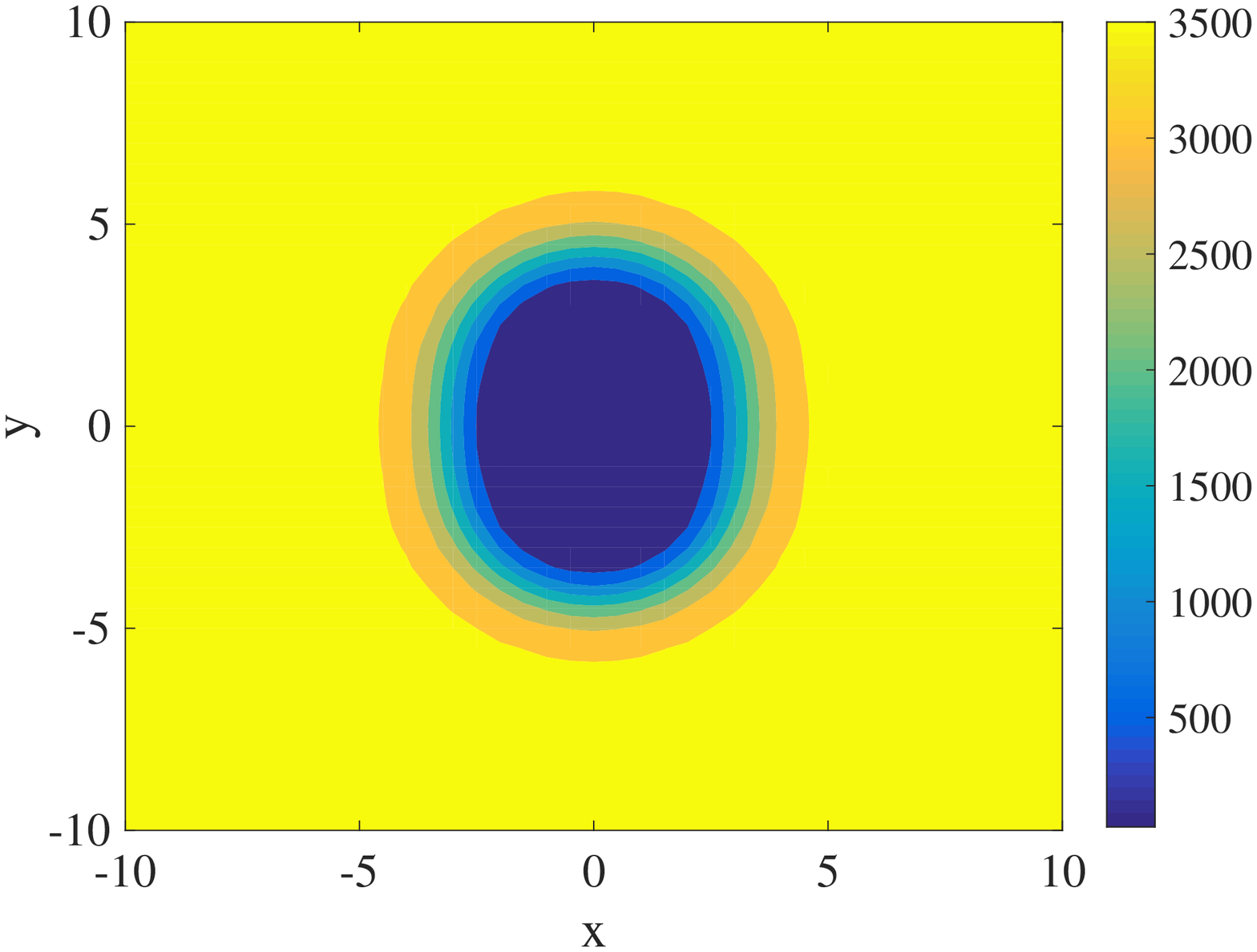}
            \end{minipage}
            }
           \centering \subfigure[]{
            \begin{minipage}[b]{0.3\textwidth}
            \centering
             \includegraphics[width=0.95\textwidth,height=0.9\textwidth]{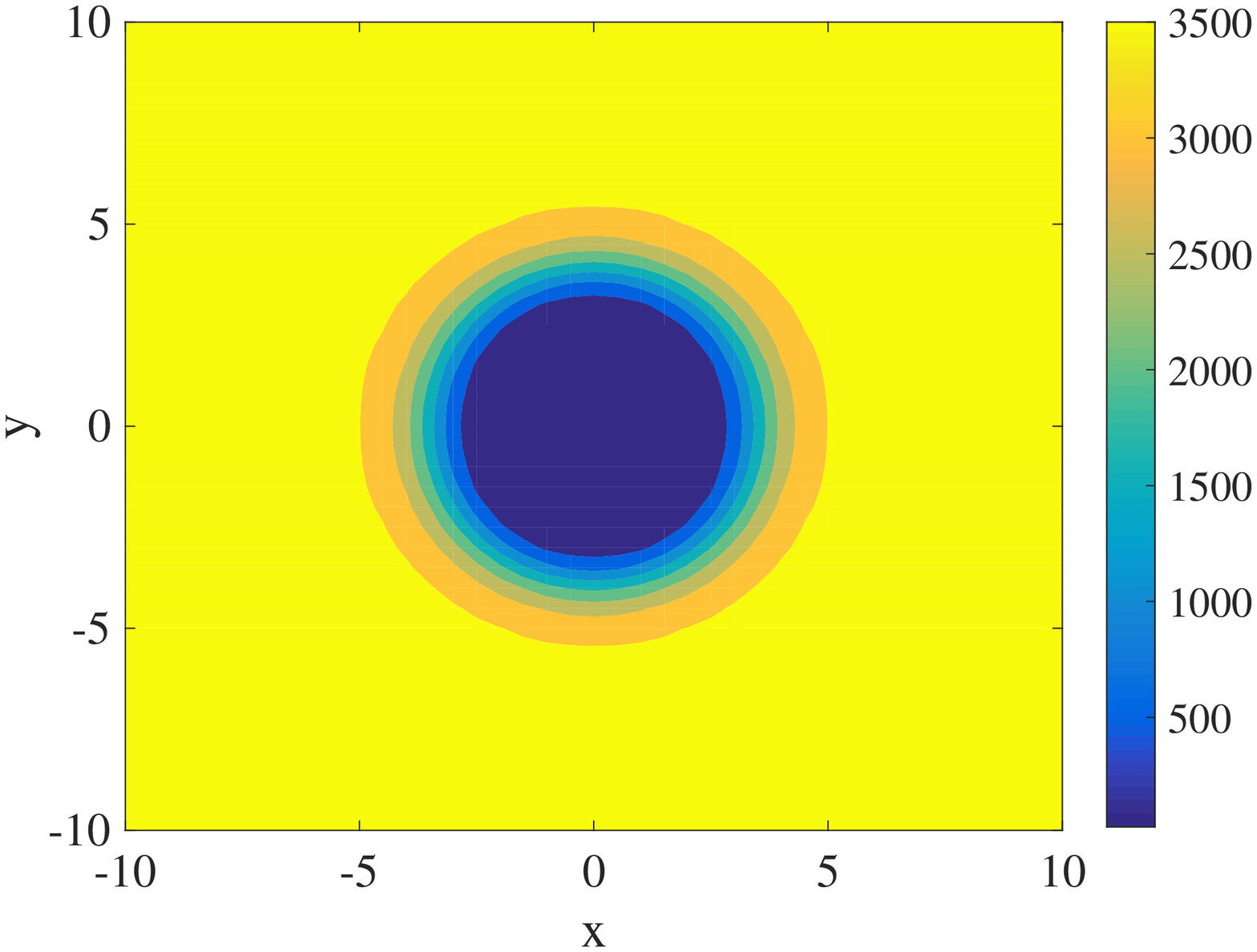}
            \end{minipage}
            }
           \caption{Example 3: nC$_{10}$ molar densities at  the initial(a),  10th(b), 20th(c), 30th(d), 50th(e) and 120th(f)   time step  respectively.}
            \label{TwoEllipseCH4andnC10MolarDensityOfnC10Temperature330K}
 \end{figure}

\begin{figure}
            \centering \subfigure[]{
            \begin{minipage}[b]{0.3\textwidth}
               \centering
             \includegraphics[width=0.95\textwidth,height=0.9\textwidth]{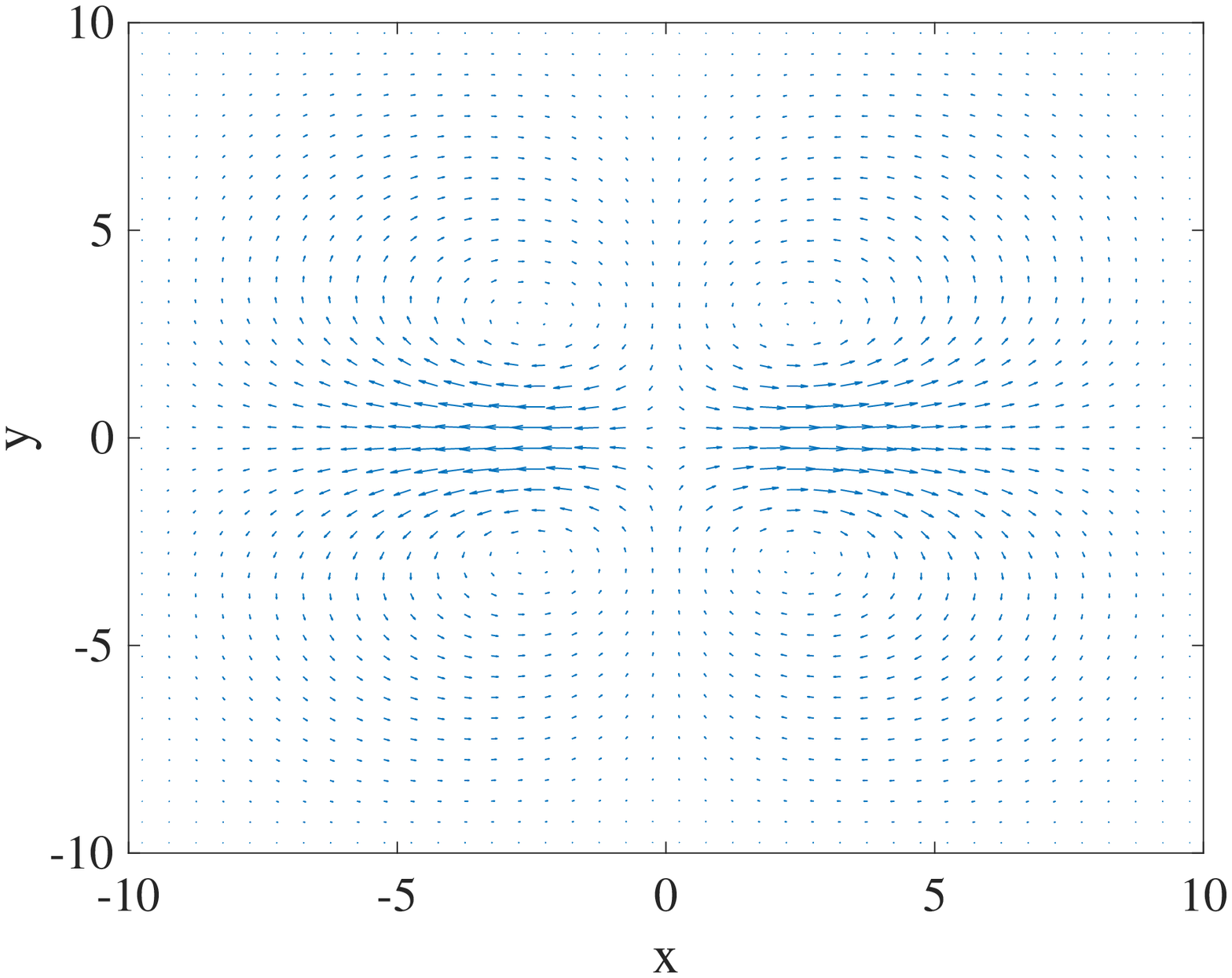}
            \end{minipage}
            }
            \centering \subfigure[]{
            \begin{minipage}[b]{0.3\textwidth}
               \centering
             \includegraphics[width=0.95\textwidth,height=0.9\textwidth]{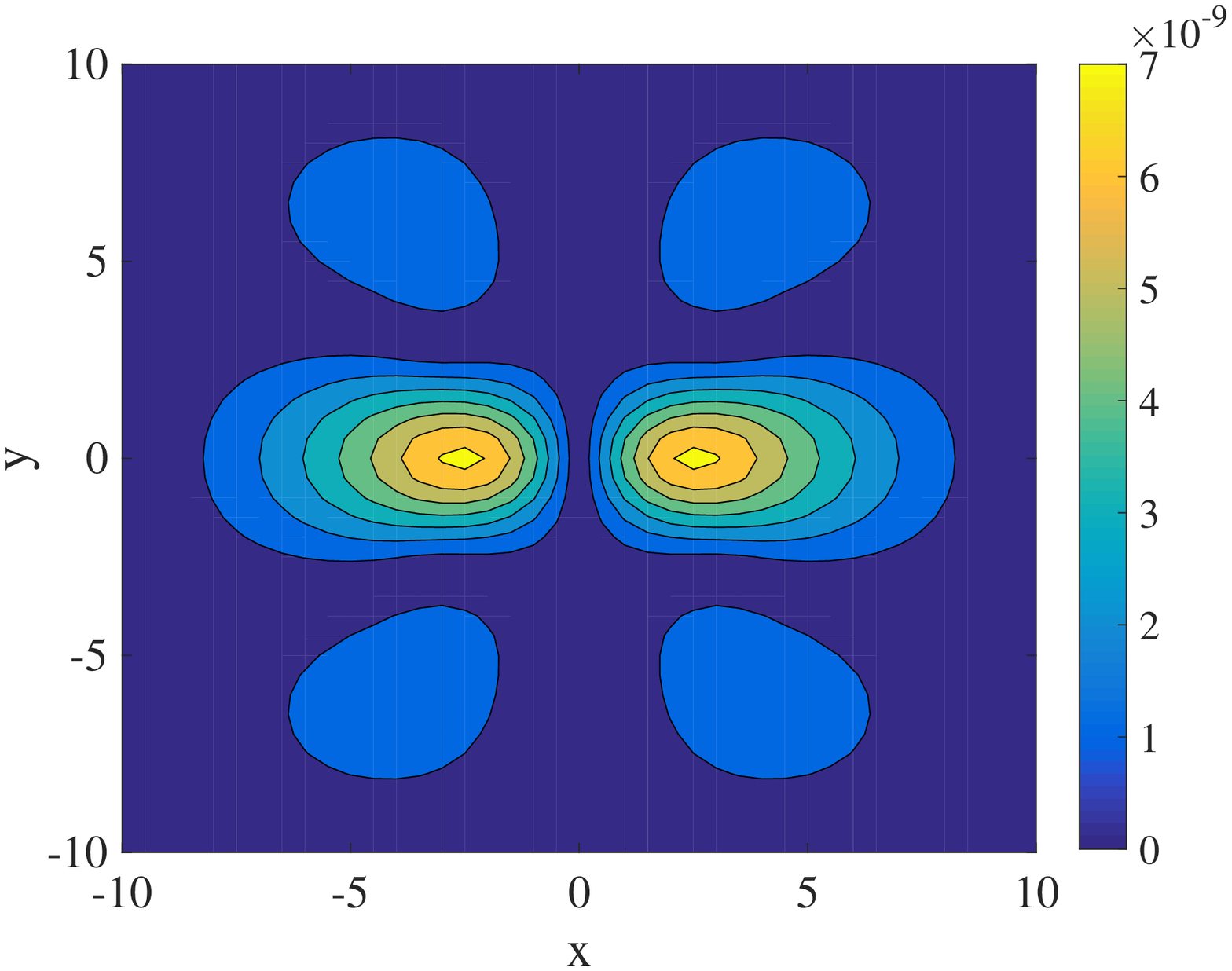}
            \end{minipage}
            }
            \centering \subfigure[]{
            \begin{minipage}[b]{0.3\textwidth}
            \centering
             \includegraphics[width=0.95\textwidth,height=0.9\textwidth]{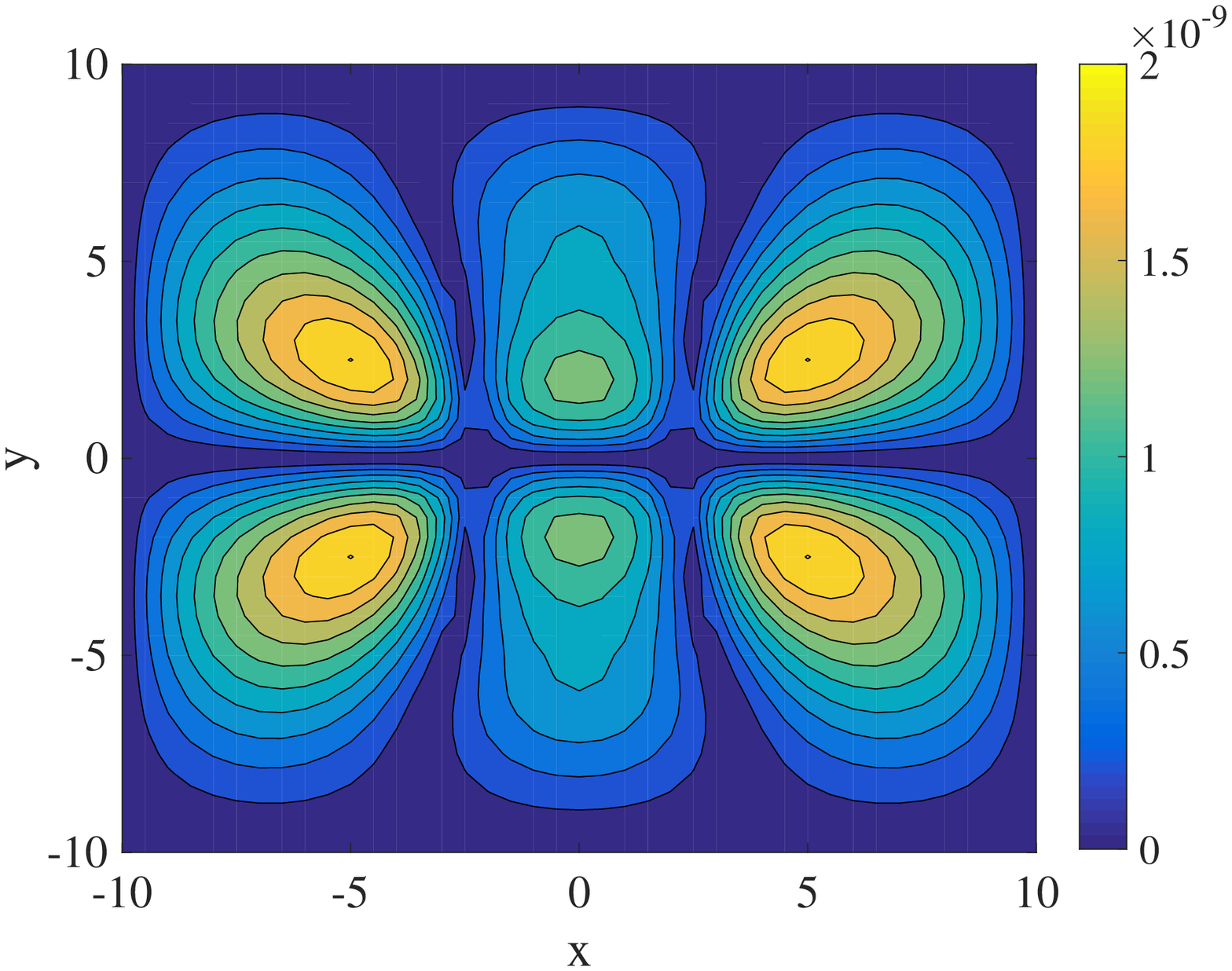}
            \end{minipage}
            }
           \centering \subfigure[]{
            \begin{minipage}[b]{0.3\textwidth}
               \centering
             \includegraphics[width=0.95\textwidth,height=0.9\textwidth]{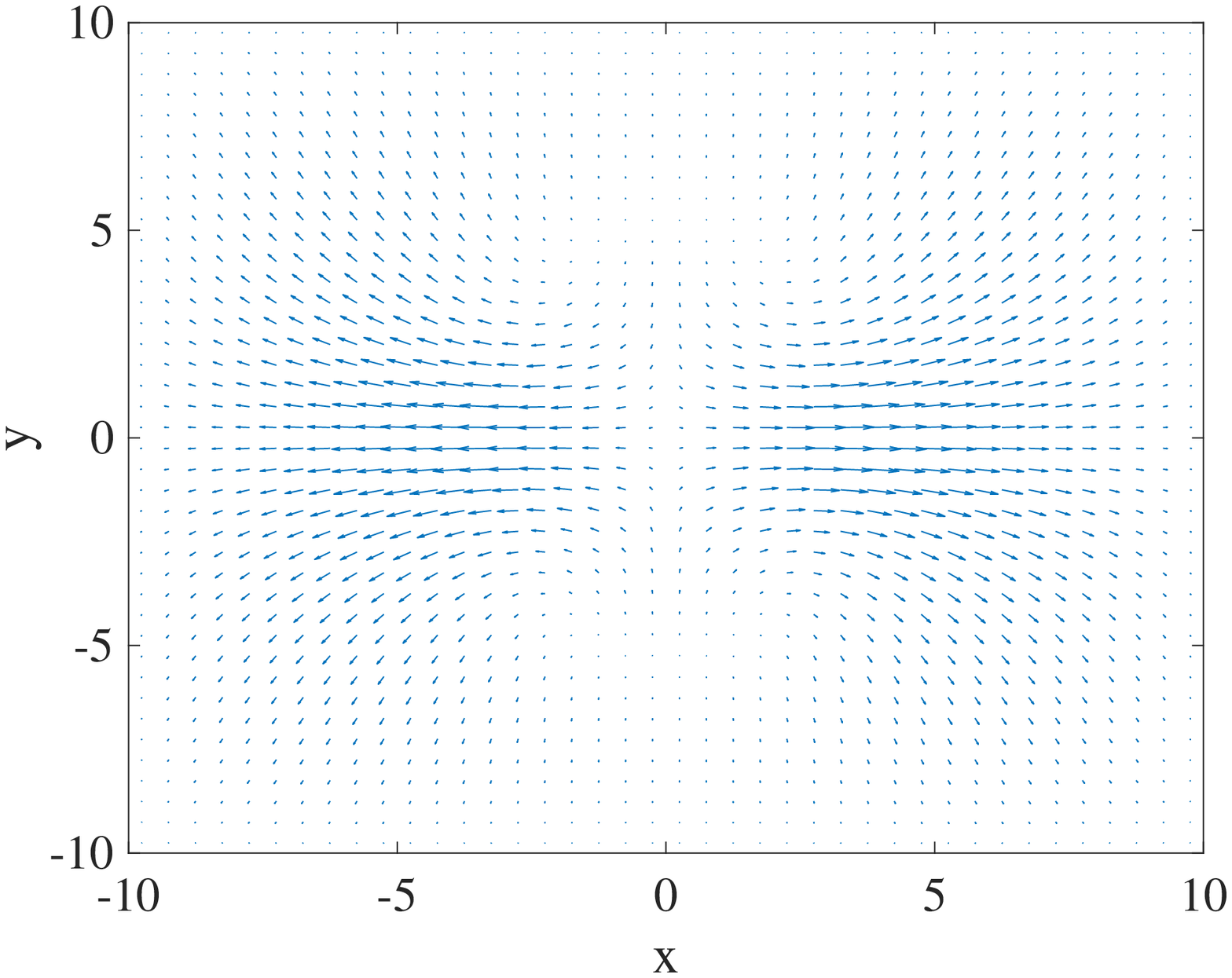}
            \end{minipage}
            }
            \centering \subfigure[]{
            \begin{minipage}[b]{0.3\textwidth}
               \centering
             \includegraphics[width=0.95\textwidth,height=0.9\textwidth]{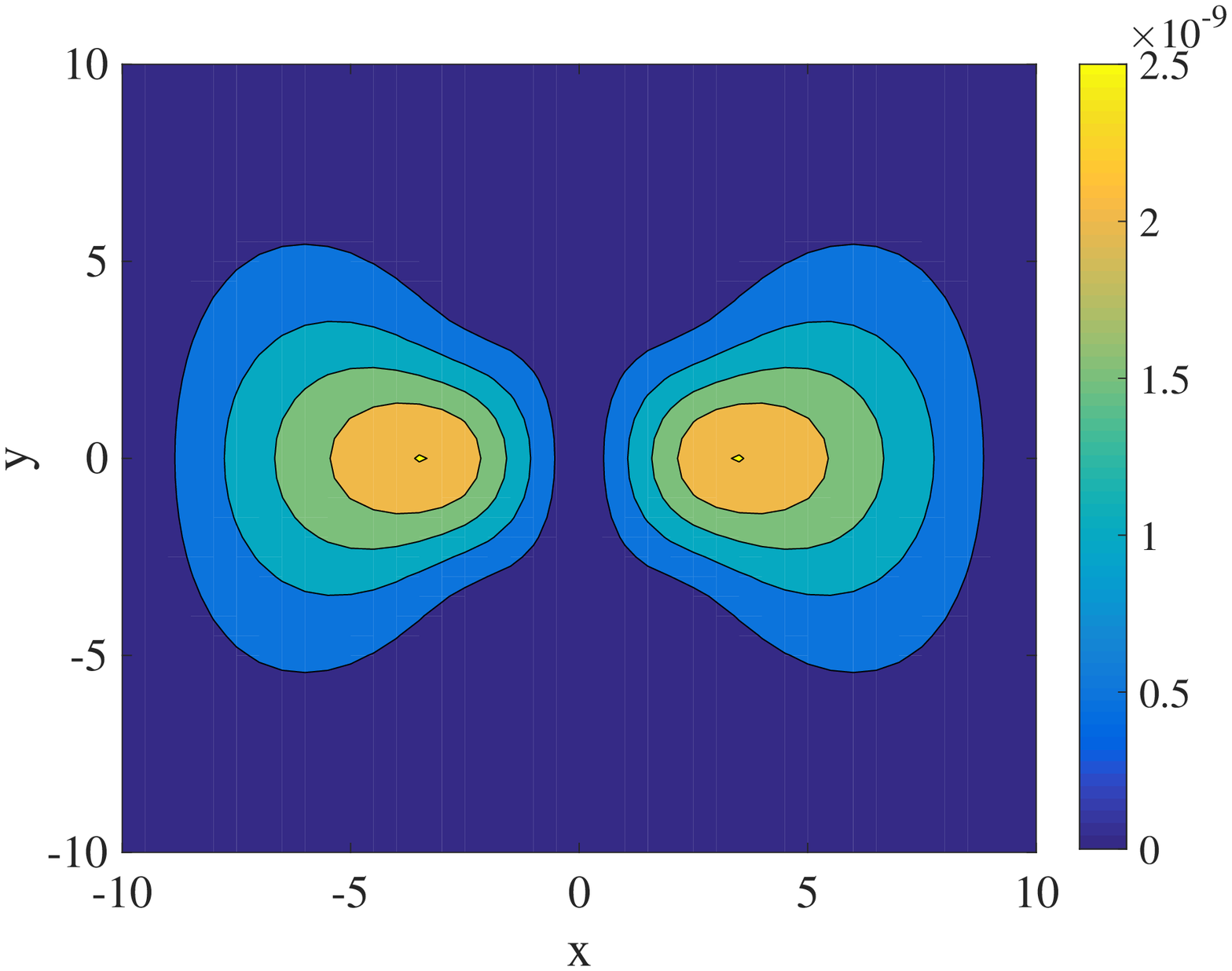}
            \end{minipage}
            }
            \centering \subfigure[]{
            \begin{minipage}[b]{0.3\textwidth}
            \centering
             \includegraphics[width=0.95\textwidth,height=0.9\textwidth]{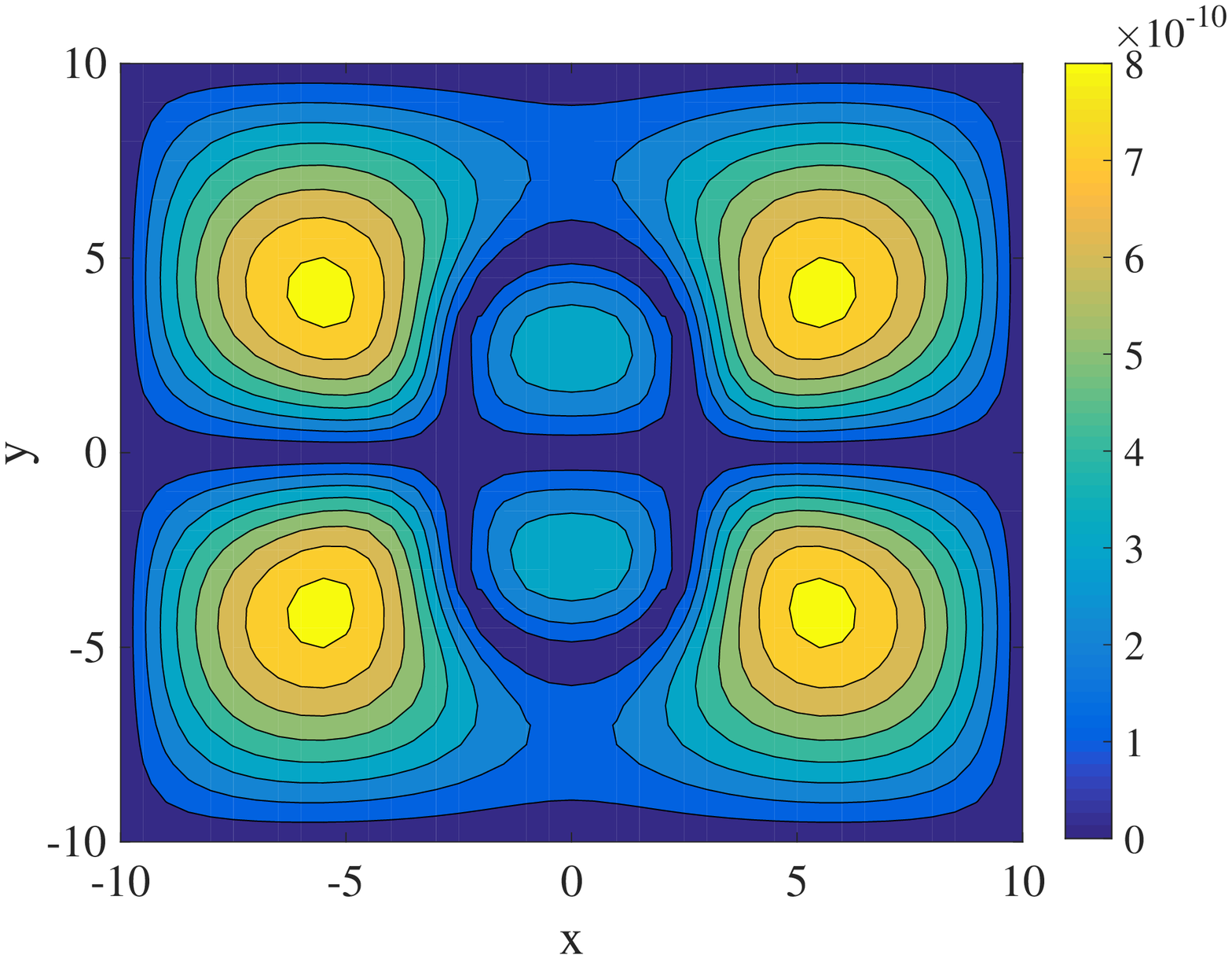}
            \end{minipage}
            }
           \centering \subfigure[]{
            \begin{minipage}[b]{0.3\textwidth}
               \centering
             \includegraphics[width=0.95\textwidth,height=0.9\textwidth]{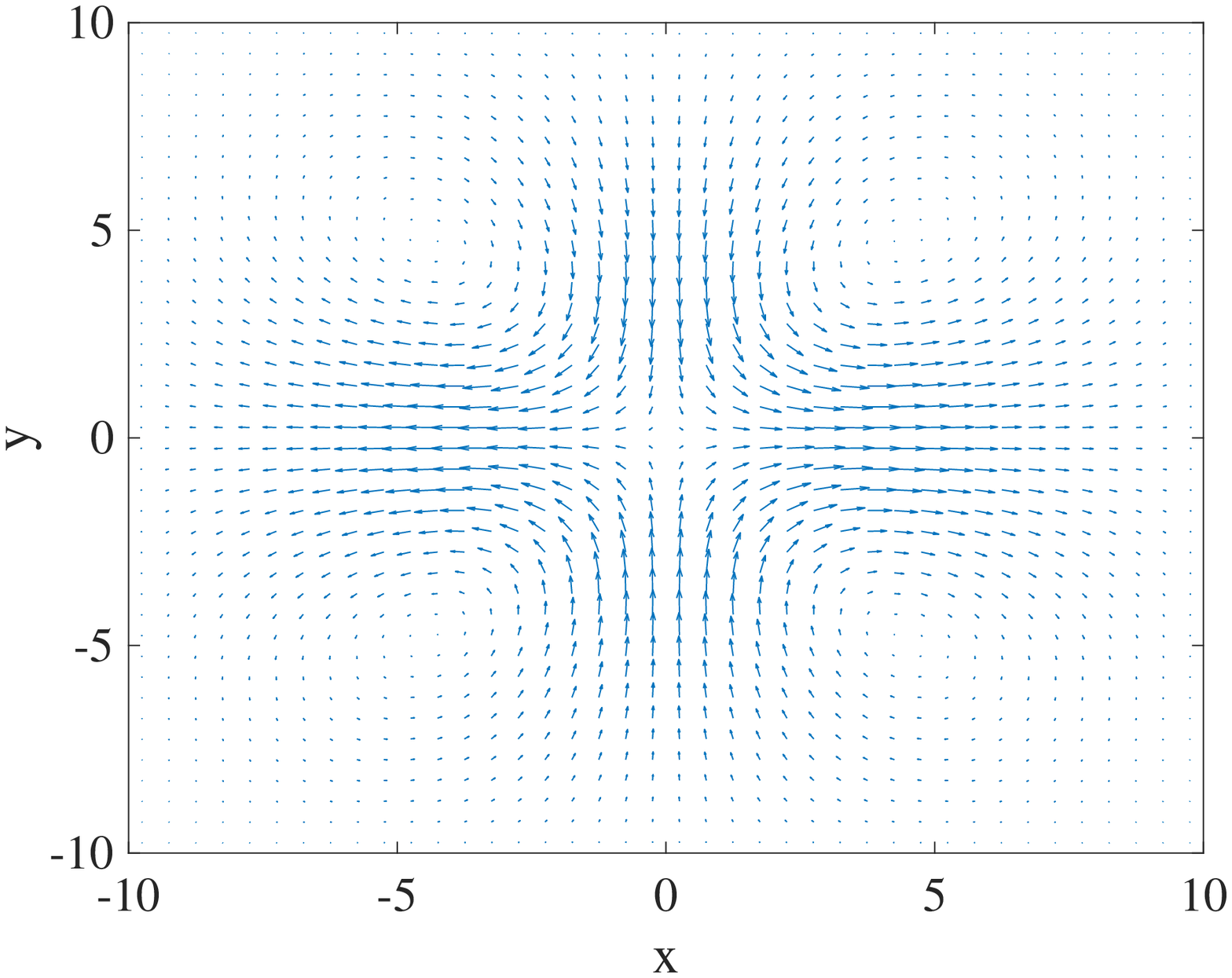}
            \end{minipage}
            }
            \centering \subfigure[]{
            \begin{minipage}[b]{0.3\textwidth}
               \centering
             \includegraphics[width=0.95\textwidth,height=0.9\textwidth]{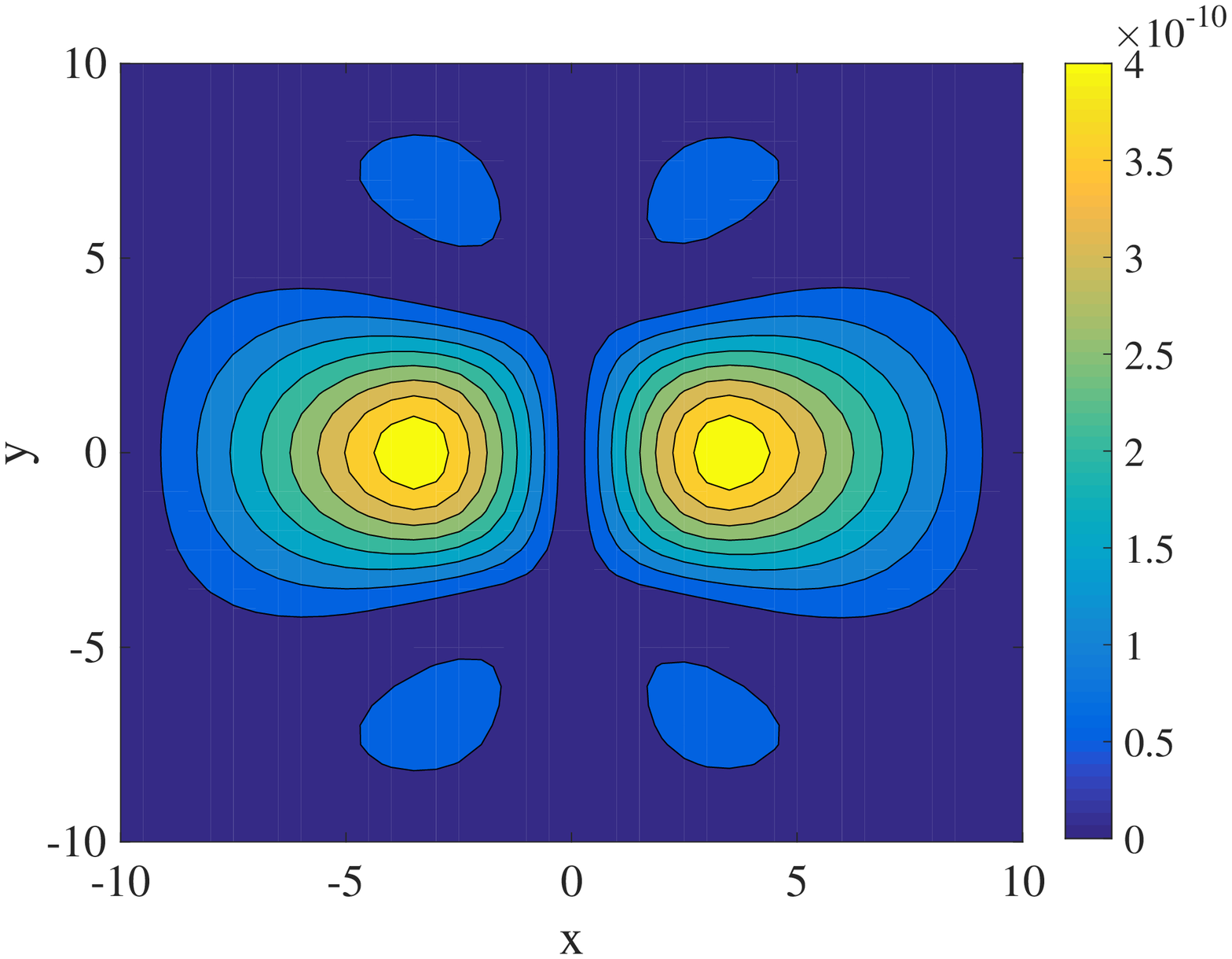}
            \end{minipage}
            }
            \centering \subfigure[]{
            \begin{minipage}[b]{0.3\textwidth}
            \centering
             \includegraphics[width=0.95\textwidth,height=0.9\textwidth]{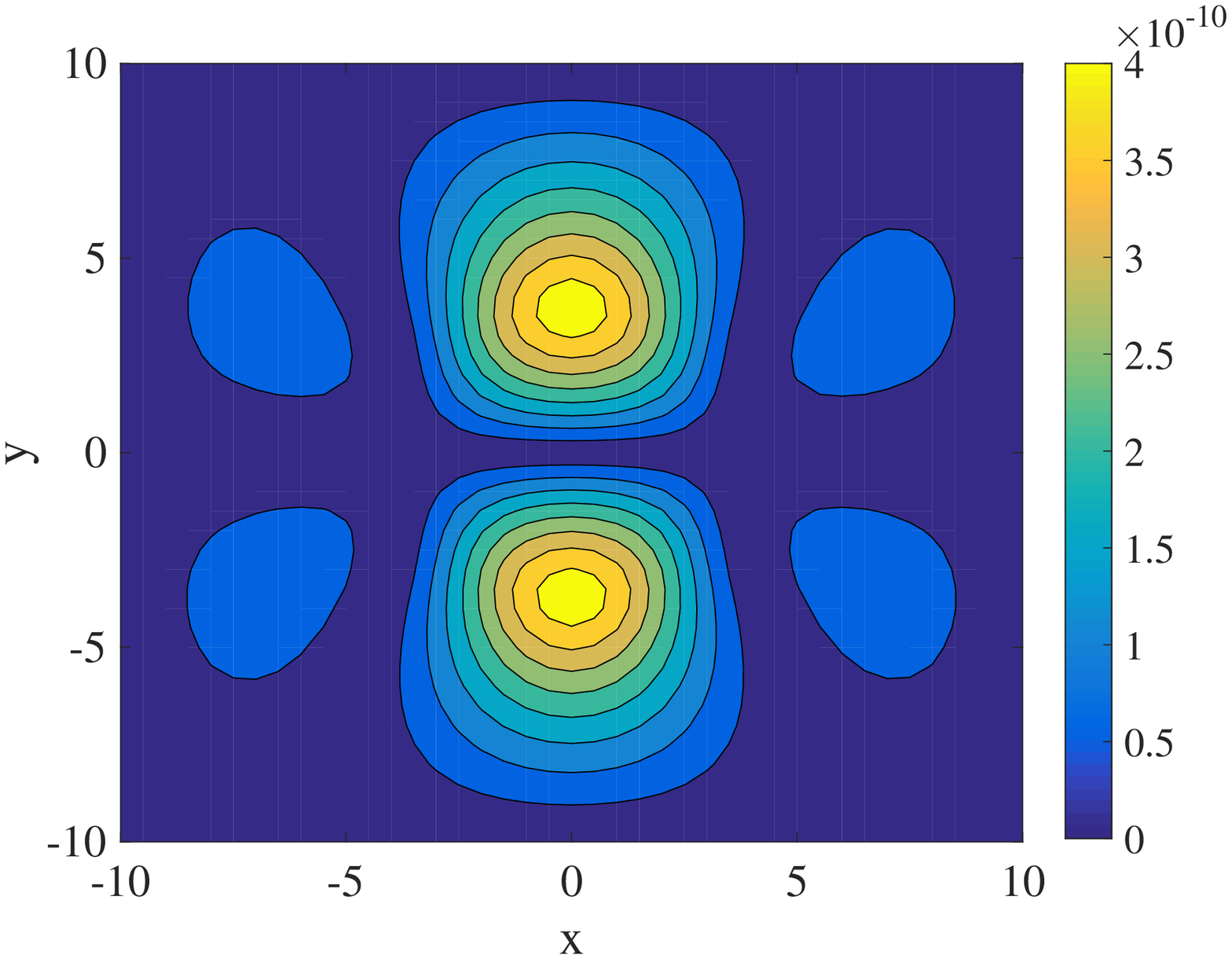}
            \end{minipage}
            }
           \caption{Example 3:   flow quivers (left column), magnitude contours of $x$-direction velocity component (center column), and magnitude contours of $y$-direction velocity component (right column)   at the 10th(top row), 30th(center row), and 120th(bottom row) time step  respectively.}
            \label{TwoEllipseCH4andnC10VelocityTemperature330K}
 \end{figure}

\section{Conclusions}

The NVT-based framework is  a latest alternative   setting that is preferred  over  the NPT-based framework to model multiphase fluid flow.  Based on the principles of  the NVT-based framework,  a mathematical model is proposed to describe the multi-component two-phase  flow  with partial miscibility. We combine  the first law of thermodynamics and the related physical relations to derive   the entropy balance equations, and then we derive a transport equation of the Helmholtz free energy density.  Furthermore, using the second law of thermodynamics, we derive a set of unified equations for both interfaces and bulk phases that can  describe the   partial miscibility of two fluids.  A feature of this model is that a  term involving  mass diffusions is naturally included in the momentum equation to ensure  consistency with thermodynamics.  We prove a relation between the pressure gradient  and   the gradients of  chemical potentials, which leads to   a new formulation of the  momentum conservation equation, showing that the gradients of  chemical potentials  become the primary driving force of the fluid motion. 

The total (free) energy   dissipation with time is proved for  the proposed model.  An energy-dissipation  numerical scheme is proposed based on a convex-concave  splitting of Helmholtz free energy density and a careful treatment of the coupling relations between molar densities and velocity  with a mathematical rigor.  Finally, we present numerical results to verify the effectiveness of the proposed method.

\section*{Appendix}
\begin{appendix}\label{appendix}

We describe  the detailed formulations of Helmholtz free energy density $f_{b}(\n)$ in \eqref{eqHelmholtzEnergy_a0}:
 \begin{eqnarray*}\label{eqHelmholtzEnergy_a0_01}
    f_b^{\textnormal{ideal}}(\n)= RT\sum_{i=1}^{M}n_i\(\ln n_i-1\),
\end{eqnarray*}
\begin{eqnarray*}\label{eqHelmholtzEnergy_a0_02}
    f_b^{\textnormal{repulsion}}(\n)=-nRT\ln\(1-bn\),
\end{eqnarray*}
\begin{eqnarray*}\label{eqHelmholtzEnergy_a0_03}
    f_b^{\textnormal{attraction}}(\n)= \frac{a(T)n}{2\sqrt{2}b}\ln\(\frac{1+(1-\sqrt{2})b n}{1+(1+\sqrt{2})b n}\),
\end{eqnarray*}
where  $n=\sum_{i=1}^Mn_i$ is the overall molar density,  and $R$ is the universal gas constant. Here,  $a$ and $b$ are the energy parameter and  the covolume, respectively, and  these parameters can be calculated as functions of  the mixture composition and the temperature.
We denote  by $T_{c_i}$ and $P_{c_i}$  the  $i$th component critical temperature and critical pressure, respectively.  For the $i$th component, let the reduced temperature be  $T_{r_i}=T/T_{c_i}$. 
The parameters $a_{i}$ and $b_{i}$ are calculated as
\begin{eqnarray*}
   a_{i}= 0.45724\frac{R^2T_{c_i}^2}{P_{c_i}}\[1+m_i(1-\sqrt{T_{r_i}})\]^2,~~~~b_{i}= 0.07780\frac{RT_{c_i}}{P_{c_i}}.
\end{eqnarray*}
 The coefficients $m_i$ are calculated  by the following formulas
\begin{eqnarray*}
 m_i=0.37464 + 1.54226\omega_i-  0.26992\omega_i^2 ,~~\omega_i\leq0.49,
\end{eqnarray*}
\begin{eqnarray*}
 m_i=0.379642+1.485030\omega_i-0.164423\omega_i^2 +0.016666 \omega_i^3,~~\omega_i>0.49,
\end{eqnarray*}
where $\omega_i$ is the acentric factor.
  We let the mole fraction of component $i$ be $y_i=n_i/n$. Then  $a(T)$ and $b$ are calculated by
\begin{eqnarray*}
   a =\sum_{i=1}^M\sum_{j=1}^M y_i y_j \(a_ia_j\)^{1/2}(1-k_{ij}),~~~~b =\sum_{i=1}^M  y_i b_{i},
\end{eqnarray*}
where $k_{ij}$ the given binary interaction coefficients for the energy parameters.

 We now show that for bulk homogeneous  fluids, the PR-EOS formulation \eqref{eqPREOSByMolarDens} is equivalent to  the pressure equation \eqref{eqgeneralpressure}.
 In fact, in this case, $f=f_b$ and the corresponding chemical potential of component $i$ can be calculated as
\begin{eqnarray*}\label{eqAppoxError}
 \mu_{i}&=&\frac{\partial  f_b(\n,T)}{\partial n_i}\nonumber\\
 &=& 
 RT\(\ln(n_{i})+\frac{b_{i}n}{1-bn}-\ln(1-bn)\)\nonumber\\
 &&+\frac{1}{2\sqrt{2}}\(\frac{2\sum_{j=1}^{M}n_{j}(a_{i}a_{j})^{1/2}(1-k_{ij})}{bn}-\frac{ab_{i}}{b^{2}}\)\ln\(\frac{1+(1-\sqrt{2})bn}{1+(1+\sqrt{2})bn}\)\nonumber\\
 &&+\frac{ab_{i}n}{b((\sqrt{2}-1)bn-1)(1+(1+\sqrt{2})bn)},
\end{eqnarray*}
and furthermore,  we deduce that
\begin{eqnarray*}\label{eqAppoxError}
\sum_{i=1}^M \mu_{i}n_i&=&RT\sum_{i=1}^M n_i\ln n_{i}+RT\frac{b n^2}{1-bn}-RTn\ln(1-bn)\nonumber\\
 &&+\frac{1}{2\sqrt{2}}\frac{an}{b}\ln\(\frac{1+(1-\sqrt{2})bn}{1+(1+\sqrt{2})bn}\)\nonumber\\
 &&+ \frac{a n^2}{((\sqrt{2}-1)bn-1)(1+(1+\sqrt{2})bn)}.
\end{eqnarray*}
Substituting the above equation into \eqref{eqgeneralpressure} yields  
\begin{eqnarray*}\label{eqPREOSpressureHED}
    p&=&\sum_{i=1}^M\mu_in_i-f_b\nonumber\\
    &=&RT\frac{b n^2}{1-bn}+RTn+ \frac{a n^2}{((\sqrt{2}-1)bn-1)(1+(1+\sqrt{2})bn)}\nonumber\\
    &=&\frac{nRT}{1-bn}-\frac{an^2}{1+2bn -b^2n^2 },
\end{eqnarray*}
 which is just  the PR-EOS formulation \eqref{eqPREOSByMolarDens}. 
 
 Finally, we  introduce the formulations of the  influence parameters. The pure component influence parameters $c_{i}$ are given by \cite{miqueu2004modelling}
\begin{eqnarray*}
 c_i=a_ib_i^{2/3}\[\alpha_i(1-T_{r_i})+\beta_i\],
\end{eqnarray*}
where  $\alpha_i$ and $\beta_i$ are the coefficients
 correlated merely with the acentric factor $\omega_i$ of the $i$th component
 by the following relations
\begin{eqnarray*}
 \alpha_i=-\frac{10^{-16}}{1.2326+1.3757\omega_i},~~~~\beta_i=\frac{10^{-16}}{0.9051+1.5410\omega_i}.
\end{eqnarray*}
The cross influence parameter   is generally  described  as the modified geometric mean of the pure component
influence parameters $c_i$ and $c_j$ by
\begin{eqnarray*}\label{eqCij}
 c_{ij}= (1-\beta_{ij})\sqrt{c_ic_j},
\end{eqnarray*}
where the parameters $\beta_{ij}$ are binary interaction coefficients chosen to satisfy the symmetry $c_{ij}=c_{ji}$ and $\beta_{ii}=0$.  In the numerical tests of this work, we take $\beta_{ij}=0.5$ for $i\neq j$. From the above formulations of the  influence parameters, it is generally  assumed that the influence parameters rely  on the temperature but  independent of the component  molar density.

\end{appendix}

\small

\begin{thebibliography}{10}


\bibitem{Abels2012TwoPhaseModel}
H. Abels, H. Garcke  and G. Gr{\"u}n.
\newblock Thermodynamically consistent, frame indifferent diffuse interface models for incompressible two-phase flows with different densities. 
\newblock {\em Mathematical Models and Methods in Applied Sciences},   Vol. 22, No. 3, 1150013, 2012.


\bibitem{arbogast1997mixed}
T.~Arbogast, M.F. Wheeler, and I.~Yotov.
\newblock Mixed finite elements for elliptic problems with tensor coefficients
  as cell-centered finite differences.
\newblock {\em SIAM Journal on Numerical Analysis}, pages 828--852, 1997.


\bibitem{Bao2012FEM}
K. Bao, Y. Shi, S. Sun, and X.-P. Wang.
\newblock A finite element method for the numerical solution of the coupled Cahn-Hilliard and Navier-Stokes system for moving contact line problems.
\newblock {\em Journal of Computational Physics}, 231(24): 8083--8099,  2012.


\bibitem{chen2006multiphase} Z. Chen, G.  Huan,  Y.  Ma,   Computational methods for multiphase flows
in porous media. SIAM Comp. Sci. Eng., Philadelphia, 2006.

\bibitem{shen2016JCP} 
Y. Chen, J.  Shen. 
\newblock Efficient, adaptive energy stable schemes for the incompressible Cahn-Hilliard Navier-Stokes phase-field models.
\newblock {\em  Journal of Computational Physics, }  308:  40-56, 2016.

\bibitem{Cogswell2010thesis} 
D.~A.~Cogswell. 
\newblock{\em  A phase-field study of ternary multiphase microstructures.}
\newblock PhD thesis, MIT, USA, 2010.



\bibitem{Groot2015NET}
S. R. De Groot, and  P. Mazur.
\newblock{\em Non-Equilibrium Thermodynamics.}
\newblock  Dover Publications, New York,   2011.



\bibitem{firoozabadi1999thermodynamics}
A. Firoozabadi.
\newblock {\em Thermodynamics of hydrocarbon reservoirs.}
\newblock McGraw-Hill New York, 1999.

\bibitem{Girault1996Mac}
 V. Girault,  H. Lopez.
\newblock Finite-element error estimates for the MAC scheme.
 \newblock {\em IMA Journal of numerical analysis,} 16(3): 347-379, 1996.






\bibitem{jindrova2013fast}
T.~Jindrov{\'a} and J.~Miky$\check{\textnormal{s}}$ka.
\newblock Fast and robust algorithm for calculation of two-phase equilibria at
  given volume, temperature, and moles.
\newblock {\em Fluid Phase Equilibria}, 353:101--114, 2013.
 

\bibitem{mikyvska2015General}
 T.~Jindrov{\'a} and  J.~Miky$\check{\textnormal{s}}$ka.
\newblock General algorithm for multiphase equilibria calculation at given volume, temperature, and moles.
\newblock {\em Fluid Phase Equilibria}, 393:7--25, 2015.


 \bibitem{kousun2015CMA} 
J. Kou, S. Sun, and X. Wang.
\newblock Efficient numerical methods for simulating surface tension of multi-component mixtures  with the gradient theory of  fluid interfaces. \newblock {\em Computer Methods in Applied Mechanics
and Engineering}, 292: 92--106, 2015.

\bibitem{kousun2015SISC}
J. Kou and S. Sun.
\newblock Numerical methods for a multi-component two-phase   interface model with  geometric mean influence parameters.
\newblock {\em SIAM Journal on Scientific Computing}, 37(4): B543--B569, 2015.

\bibitem{kousun2015CHE}
J. Kou and S. Sun.
\newblock Unconditionally stable methods for simulating multi-component two-phase interface models with Peng-Robinson equation of state and various boundary conditions.
\newblock {\em Journal of Computational and Applied Mathematics}, 291(1): 158--182, 2016.  

\bibitem{kousun2016Flash}
J. Kou, S. Sun, and X. Wang.
\newblock An energy stable evolution method for simulating two-phase equilibria of multi-component fluids at constant moles, volume and temperature.
\newblock {\em Computational Geosciences},  20: 283--295, 2016.  


\bibitem{kouandsun2016multiscale}
J.~Kou and S.~Sun.
\newblock {Multi-scale     diffuse interface modeling  of multi-component two-phase  flow  with partial miscibility.}
\newblock {\em Journal of Computational Physics}, 318: 349--372, 2016.


\bibitem{kouandsun2014dgtwophase}
J.~Kou and S.~Sun.
\newblock {Convergence of discontinuous Galerkin methods for incompressible two-phase flow in heterogeneous media}.
\newblock {\em SIAM Journal on Numerical Analysis}, 51:  3280--3306, 2013.


\bibitem{Lebon2008NET}
G. Lebon, D. Jou, and J. Casas-V\'{a}zquez.
\newblock {\em Understanding Non-equilibrium Thermodynamics}, 
\newblock Springer-Verlag Berlin Heidelberg,  2008.
  




\bibitem{mikyvska2011new} 
 J. Miky$\check{\textnormal{s}}$ka and A. Firoozabadi.
\newblock A new thermodynamic function for phase-splitting at constant
  temperature, moles, and volume.
\newblock {\em AIChE Journal}, 57(7):1897--1904, 2011.

\bibitem{mikyvska2012investigation}
 J.~Miky$\check{\textnormal{s}}$ka and A.~Firoozabadi.
\newblock Investigation of mixture stability at given volume, temperature, and
  number of moles.
\newblock {\em Fluid Phase Equilibria}, 321:1--9, 2012.


\bibitem{miqueu2004modelling}
C. Miqueu, B. Mendiboure, C. Graciaa and J. Lachaise.
\newblock Modelling of the surface tension of binary and ternary mixtures with the gradient theory of fluid interfaces.
\newblock {\em Fluid Phase Equilibria}, 218:189--203, 2004.


 
\bibitem{Moortgat2013compositional}
J. Moortgat and A. Firoozabadi.
\newblock Higher-order compositional modeling of three-phase flow in 3D fractured porous media based on cross-flow equilibrium.
\newblock {\em Journal of Computational Physics}, 250: 425--445,  2013.
  
 \bibitem{Onuki2005PRL}
A. Onuki.  
\newblock Dynamic van der Waals theory of two-phase fluids in heat flow. 
\newblock {\em Physical Review Letters,}   94(5): 054501, 2005.

 \bibitem{Onuki2007PRE}
A. Onuki.  
\newblock  Dynamic van der Waals theory. 
\newblock {\em Physical Review E,}   75(3): 036304, 2007.




\bibitem{polivka2014compositional}
O.~Pol{\'\i}vka and J.~Miky$\check{\textnormal{s}}$ka.
\newblock Compositional modeling in porous media using constant volume flash
  and flux computation without the need for phase identification.
\newblock {\em Journal of Computational Physics}, 272:149--169, 2014.

\bibitem{Peng1976EOS}
D. Peng and D.B. Robinson.
\newblock A new two-constant equation of state.
\newblock {\em Industrial and Engineering Chemistry Fundamentals},
  15(1):59--64, 1976.

  
\bibitem{qiaosun2014}
Z. Qiao and S. Sun.
\newblock Two-phase fluid simulation using a diffuse interface model with Peng-Robinson equation of state.
\newblock {\em SIAM Journal on Scientific Computing}, 36(4): B708--B728, 2014.

\bibitem{shen2015SIAM}
 J. Shen, X. Yang.  
 \newblock Decoupled, energy stable schemes for phase-field models of two-phase incompressible flows.
 \newblock {\em  SIAM Journal on Numerical Analysis,}  53(1): 279-296, 2015.

\bibitem{Qian2016heatflow}
 M. T. Taylor,   T. Qian.
\newblock Thermal singularity and contact line motion in pool boiling: Effects of substrate wettability. 
\newblock {\em Physical Review E, }  93(3): 033105, 2016.

  
\bibitem{Tryggvason2011book}
G. Tryggvason, R. Scardovelli  and S. Zaleski.
\newblock {\em Direct Numerical Simulations of Gas-Liquid Multiphase Flows.}
\newblock { Cambridge University Press, New York}, 2011.
  

%

%
%
%


%

\end{thebibliography}

\end{document}